\theoremstyle{plain}
\newtheorem{THM}{Theorem}
\newtheorem*{THME}{Theorem 18}
\newtheorem{LEM}[THM]{Lemma}
\newtheorem{PROP}[THM]{Proposition}
\newtheorem{COR}[THM]{Corollary}
\newtheorem{FACT}[THM]{Fact}
\theoremstyle{definition}
\newtheorem*{DEF}{Definition}
\newtheorem*{RMK}{Remark}
\newtheorem{EXA}{Example}
\newtheorem*{ACK}{Acknowledgments}
\DeclareMathOperator{\Mod}{Mod}
\DeclareMathOperator{\SL}{SL}
\DeclareMathOperator{\PSL}{PSL}
\DeclareMathOperator{\GL}{GL}
\DeclareMathOperator{\tr}{tr}
\DeclareMathOperator{\VERT}{Vert}
\newcommand{\hp}{\mathbb{H}^2}
\newcommand{\infcup}[1][k]{\bigcup_{#1=-\infty}^\infty}
\newcommand*\Let[2]{\State #1 $\gets$ #2}
\title{On Translation Lengths of Anosov Maps on Curve Graph of Torus}
\author{\textsc{Hyungryul Baik, Changsub Kim}\\
 \textsc{Sanghoon Kwak, Hyunshik Shin}}
\begin{document}

\maketitle
\begin{abstract}
  We show that an Anosov map has a geodesic axis on the curve graph of
  a torus. The direct corollary of our result is the stable
  translation length of an Anosov map on the curve graph is always a
  positive integer. As the proof is constructive, we also provide an
  algorithm to calculate the exact translation length for any given
  Anosov map. The application of our result is threefold: (a) to
  determine which word realizes the minimal translation length on the
  curve graph within a specific class of words, (b) to establish the
  effective bound on the ratio of translation lengths of an Anosov map
  on the curve graph to that on Teichm{\" u}ller space, and (c) to
  estimate the overall growth of the number of Anosov maps which have
  a sufficient number of Anosov maps with the same translation length.
\end{abstract}

\section{Introduction}
Let $S=S_{g,n}$ be an orientable surface with genus $g$ and $n$
punctures. The \textbf{mapping class group} of $S$, denoted by
$\mathrm{Mod}(S)$, is the group of isotopy classes of
orientation-preserving homeomorphisms of $S$. An element of a mapping
class group is called a \textbf{mapping class}.  The Nielsen--Thurston
classification theorem \cite{thurston1988geometry} states that every
mapping class is either periodic, reducible, or pseudo-Anosov. A
mapping class $\psi$ in $\Mod(S)$ is said to be \textbf{pseudo-Anosov}
if there exists a pair of transverse invariant singular measured
foliations associated to $\psi$, one of which is stretched by a
constant $\lambda$, while the other is contracted by $\lambda^{-1}$.
When $S$ is the torus $T^2$, there are foliations without
singularities and in such case, $\psi$ is called an \textbf{Anosov}
element. It is an easy consequence that Anosov elements can be
represented as matrices $M \in \SL(2,\mathbb{Z}) \cong \Mod(T^2)$ with
$|\tr(M)| > 2$.

The \textbf{curve complex} $\mathcal{C}(S)$ of $S$, first introduced
by Harvey \cite{harvey1981boundary}, is a simplicial complex where
vertices are isotopy classes of essential simple closed curves and
$(k+1)$-vertices span a $k$-simplex if and only if each $(k+1)$-tuple
of vertices has a set of representative curves with the minimal
possible geometric intersection in the given surface. The
\textbf{curve graph} is the 1-skeleton of the curve complex. By giving
each edge length 1, $\mathcal{C}(S)$ becomes a Gromov-hyperbolic metric space
with path metric $d_{\mathcal{C}}(\cdot,\cdot)$
\cite{masur1999geometry}. Then $\Mod(S)$ acts on $\mathcal{C}(S)$ by
isometry. The \textbf{stable translation length}(also known as
asymptotic translation length) of a mapping class $f\in \Mod(S)$ on
$\mathcal{C}(S)$ is defined by
  \[
   l_{\mathcal{C}}(f)=\liminf_{j\rightarrow \infty}\frac{d_{\mathcal{C}}(\alpha,f^j(\alpha))}{j},
  \]
  where $\alpha$ is a vertex in $\mathcal{C}(S)$.  Using the
  triangular inequality, one can show that the $l_{\mathcal{C}}(f)$ is
  independent of the choice of $\alpha$. Masur and Minsky
  \cite{masur1999geometry} showed that $l_{\mathcal{C}}(f) > 0$ if and
  only if $f$ is pseudo-Anosov.

There have been many research works on estimating stable translation length for
non-sporadic surfaces, that is, the complexity $\xi(S) = 3g-4+n$ is positive.
Those works can be found in \cite{farb2008lower},
\cite{Gadre2011minimal}, \cite{Gadre2013Lipschitz},
\cite{valdiva2014asymptotic}, \cite{aougab2015pseudo},
\cite{kin2017small}, \cite{baik2018minimal} and references
therein.
Algorithmic approaches to calculating stable translation lengths of pseudo-Anosov maps
are available in \cite{Shackleton2012} and \cite{webb2015combinatorics}.
A polynomial-time algorithm is established by \cite{bell2016polynomial}.

As far as we know, there is no literature developing a similar theory
for sporadic surfaces. As the complexity of sporadic surfaces is low enough,
it is to be expected that \textit{exact} translation lengths can be
\textit{calculated}, rather than merely estimating stable translation
lengths. Among sporadic cases, only $S=S_{0,4},S_{1,0},S_{1,1}$ have the
\textbf{Farey graph} $\mathcal{F}$ as their curve graph; the other ones, namely the
spheres with at most 3 punctures have the empty set as their curve graph.
Thus, in this paper, we discuss a way to find the \textit{exact} translation
length of an Anosov map on the curve graph of a \textit{torus}.

As noted before, $\Mod(S)$ acts on $\mathcal{C}(S)$ by isometry. In
particular, when $S$ is a torus, we have an isometry
$\SL(2,\mathbb{Z})$-action on $\mathcal{F}$. In fact, we can embed
$\mathcal{F}$ into $\hp$, so that $\SL(2,\mathbb{Z})$-action on
$\mathcal{F}$ can be seen as a restriction of the
$\PSL(2,\mathbb{R})$-action on $\hp$, so-called M{\" o}bius
transformation. Moreover, an Anosov element in $\SL(2,\mathbb{Z})$ and
a hyperbolic element in
$\PSL(2,\mathbb{Z}) \subset \PSL(2,\mathbb{R})$ both are characterized
by their absolute value of trace being bigger than 2. It follows that
one can identify an Anosov element in $\SL(2,\mathbb{Z})$ with a
hyperbolic element in $\PSL(2,\mathbb{Z})$. Hence, in the rest of the
paper we will interchangeably say ``a \textit{hyperbolic} element of
$\PSL(2,\mathbb{Z})$'', and ``an \textit{Anosov} element of
$\SL(2,\mathbb{Z})$.''

Any hyperbolic element of $\PSL(2,\mathbb{Z})$ has the unique
invariant geodesic axis in $\hp$. This seems to suggest the existence
of an invariant bi-infinite geodesic in $\mathcal{F}$ associated with an
Anosov map. Indeed, this identification allowed us to prove the
following main theorem:
\begin{THME}
  For any Anosov map $f$, there exists a
  bi-infinite geodesic $\mathcal{P}$ in $\mathcal{F}$ on which $f$ acts transitively.
\end{THME}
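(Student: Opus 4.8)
The plan is to transport everything into the hyperbolic plane and read the desired combinatorial geodesic off the way the axis of $f$ meets the Farey tessellation. Using the identification explained in the introduction, I would replace $f$ by a hyperbolic element $M \in \PSL(2,\mathbb{Z})$ acting on $\hp$ as a M\"obius transformation, and regard $\mathcal{F}$ as the $1$-skeleton of the Farey tessellation of $\hp$ by ideal triangles. Since $|\tr(M)| > 2$, the fixed points $\xi^-,\xi^+ \in \partial\hp = \mathbb{R}\cup\{\infty\}$ of $M$ are quadratic irrationals; in particular they are irrational, so the axis $\gamma$ of $M$ (the complete geodesic from $\xi^-$ to $\xi^+$) meets no vertex of the tessellation and crosses its edges transversally. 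Enumerating in order the ideal triangles $\dots, T_{-1}, T_0, T_1, \dots$ that $\gamma$ traverses, consecutive triangles share a Farey edge, and because $M$ is a tessellation automorphism translating $\gamma$ it shifts the whole sequence $(T_i)$ by a fixed amount. Hence every piece of combinatorial data attached to $(T_i)$ is periodic; equivalently, this periodicity is Lagrange's theorem applied to the continued fraction expansions of $\xi^{\pm}$.

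Next I would write down a concrete $M$-invariant edge-path approximating $\gamma$. The convergents $\dots, h_{-1}/k_{-1}, h_0/k_0, h_1/k_1, \dots$ of $\xi^{+}$, continued symmetrically toward $\xi^{-}$, give a bi-infinite sequence of vertices in which consecutive terms are Farey-adjacent, since $|h_n k_{n+1} - k_n h_{n+1}| = 1$; call this edge-path $\mathcal{P}_0$. As $M$ permutes the tessellation and shifts $(T_i)$, it shifts $\mathcal{P}_0$ along itself by a fixed number of terms. If $\mathcal{P}_0$ were geodesic we would be finished: $f$ would act on it as a translation, which is the transitivity asserted, and the shift would equal the integer $l_{\mathcal{C}}(f)$.

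The heart of the matter — and the step I expect to fight with — is that $\mathcal{P}_0$ is in general \emph{not} geodesic. Indeed one computes $|h_n k_{n+2} - k_n h_{n+2}| = a_{n+2}$, so whenever a partial quotient $a_{n+2}=1$ the vertices $h_n/k_n$ and $h_{n+2}/k_{n+2}$ are themselves Farey-adjacent and the intermediate convergent is a backtrack that must be deleted. I would therefore define $\mathcal{P}$ as the sub-path of $\mathcal{P}_0$ obtained by removing a suitable (maximal) set of these backtracking convergents; since the continued fractions of $\xi^{\pm}$ are eventually periodic, this deletion pattern is periodic, so $\mathcal{P}$ remains bi-infinite with Farey-adjacent consecutive vertices and stays invariant up to shift under $M$, and $f$ still acts on it as a translation. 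The delicate combinatorial bookkeeping is exactly this selection of which convergents survive. It then remains to prove the lower bound $d_{\mathcal{C}}(P_a, P_b) \ge |a-b|$, for which I would use the separation property of the tessellation: each Farey edge $e$ is a complete geodesic of $\hp$ whose two endpoints disconnect the remaining vertices into the two open half-planes bounded by $e$, and since no edge of $\mathcal{F}$ can cross $e$, any edge-path joining vertices on opposite sides of $e$ must pass through an endpoint of $e$. I would exhibit $|a-b|$ edges crossed by $\gamma$ between $P_a$ and $P_b$, linearly ordered along $\gamma$ with nested separations of past from future, so that every competing path is forced to meet them in order and hence has length at least $|a-b|$. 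Together with the edge-path bound from $\mathcal{P}$ itself this gives equality, so $\mathcal{P}$ is a bi-infinite geodesic translated by $f$, completing the proof.
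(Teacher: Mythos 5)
Your overall strategy---pass to the axis of the hyperbolic element, read off the periodic sequence of Farey triangles it crosses, take the convergents as a candidate edge-path, and prune it where a partial quotient equals $1$---is the same skeleton as the paper's proof (its ``ladder'' is your triangle sequence $(T_i)$, its ``pivot points'' are your convergents, and its ``efficient moving condition'' is your pruning rule). But there is a genuine gap in your lower bound. You argue that any competing path must meet, in order, the $|a-b|$ separating edges you exhibit, ``and hence has length at least $|a-b|$.'' That inference is false: a single vertex can be an endpoint of arbitrarily many of those separating edges. Concretely, in a fan of $n$ Farey triangles sharing the pivot $\infty$ (vertices $\infty,0,1,\dots,n$), the $n-1$ edges $\overline{\infty\, i}$ with $1\le i\le n-1$ all separate $0$ from $n$ and are nested along any arc from near $0$ to near $n$, yet $d_{\mathcal{C}}(0,n)=2$ because the path $0,\infty,n$ meets all of them at the one vertex $\infty$. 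So the number of separating edges crossed is not a lower bound for the distance---this is exactly why a partial quotient $a_i\ge 2$ contributes only one step, not $a_i$ steps. To repair the argument you would need to select a subfamily of pairwise non-adjacent separating edges of the right cardinality, and producing that subfamily is essentially equivalent to the combinatorial analysis you deferred. The paper avoids this by first proving the ladder is geodesically convex (via Minsky's observation that every interior vertex of a geodesic is incident to an edge of $E(x,y)$) and then running a local optimality case analysis (the efficient moving condition) entirely inside the ladder.

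A second, smaller gap is the ``suitable (maximal) set of backtracking convergents.'' Greedy deletion is not well defined on an odd block of consecutive partial quotients equal to $1$, and one must also arrange the deletion pattern to be invariant under the shift induced by $M$ so that the pruned path remains $f$-periodic and geodesic across the seams; the paper needs a dedicated argument for this (prime subladders of even length in Theorem \ref{thm:periodicBiInfiniteLadder}, and the ``reluctant move'' analysis in Proposition \ref{prop:ConcatGeodesic}) to guarantee that an efficient geodesic on one period closes up to an efficient bi-infinite geodesic. Your sketch acknowledges this bookkeeping but does not supply it, and it is not routine: the all-ones case and the parity of the period both require separate treatment.
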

We remark here that this is not the case for non-sporadic surfaces;
namely, a pseudo-Anosov mapping class may not have a geodesic axis in
the curve graph. In particular, when the genus is bigger than 2, this can be easily shown
using the fact due to \cite{kin2017small}; When $S=S_g$ with $g \ge 3$, the minimal stable translation length among pseudo-Anosov mapping classes in $\Mod(S_g)$ is bounded above $\frac{1}{g^2-2g-1}$, which is strictly less than $1$ for $g \ge 3$.

In particular, the stable translation length of every Anosov map on $\mathcal{F}$ is an
\textit{integer}. This can be seen as a slight strengthening of
Bowditch's result \cite{bowditch2008tight}---the stable translation length of
any pseudo-Anosov map on the curve graph is a \textit{rational number} with a bounded
denominator--- in a special case where the additional information
comes from the concrete description of the curve complex of the torus.
Also, we emphasize here that the proof of Theorem \ref{thm:geodAxis} is
constructive, so we can calculate the exact translation length of any
Anosov element.

In section \ref{sec:FareyGraphCF}, we review some of the standard facts on the Farey graph and continued fractions.
In section \ref{sec:LadderInFareyGraph}, we introduce a special subgraph, called a \textit{ladder}, of the Farey graph, which plays a key role in this paper. 
In section \ref{sec:PSLActionOnFareyGraph}, we look more closely at the $\PSL(2,\mathbb{Z})$-action on the Farey graph.
In section \ref{sec:TrLengthOnFareyGraph}, we finally derive our main theorem and provide a concrete way to calculate the exact translation length of an Anosov element on $\mathcal{F}$.
In section \ref{sec:appl}, we provide three applications of our main theorem. Within a specific class of words in $\PSL(2,\mathbb{Z})$, we decide which form of words realizes the minimal translation length on $\mathcal{F}$(Theorem \ref{thm:minimal_length}).
Inspired from \cite{Gadre2013Lipschitz}, \cite{aougab2015pseudo} and \cite{baik2017typical}, we establish similar results for the translation length of an Anosov map on the Farey graph, namely the effective bound on the ratio of translation lengths of Anosov map on the Farey graph to that on Teichm{\" u}ller space(Theorem \ref{thm:boundsOfRatio}), and the overall growth of the number of Anosov maps which have the same translation lengths(Theorem \ref{thm:typical}).
In Appendix, we provide algorithms for generating and calibrating ladder. Both are
crucial for calculating the translation length of an Anosov element.

\begin{ACK}
  We are grateful to Mladen Bestvina, John Blackman, Sami Douba,
  Hongtaek Jung, Kyungro Kim and Dan Margalit for helpful
  comments. This work was supported by 2018 Summer-Fall KAIST
  Undergraduate Research Program. The first author was partially
  supported by Samsung Science \& Technology Foundation grant
  No. SSTFBA1702-01. The fourth author was supported by Basic Science
  Research Program through the National Research Foundation of
  Korea(NRF) funded by the Ministry of Education
  (NRF-2017R1D1A1B03035017).
\end{ACK}

\section{Farey Graph and Continued Fraction}
\label{sec:FareyGraphCF}

We review some facts of the Farey graph and continued fractions.
\begin{DEF}
  The \textbf{Farey graph} is a simplicial graph
  where each vertex is an extended rational number denoted by
  $\frac{p}{q}$, and a pair of vertices is joined by an edge if and only if
  these two vertices represent $\frac{p}{q}$ and $\frac{r}{s}$ satisfying
  $|ps-qr|=1$.
\end{DEF}
Denote by $\hp$ the hyperbolic plane. We can naturally embed Farey
graph $\mathcal{F}$ into a compactification of the hyperbolic plane
$\overline{\mathbb{H}}=\hp \cup \partial\hp$, where the vertices of
$\mathcal{F}$ are in correspondence with \textbf{extended rational
  points}
$\overline{\mathbb{Q}} = \mathbb{Q} \cup \{\frac{1}{0}=\infty\}
\subset \overline{\mathbb{R}} = \mathbb{R} \cup \{\infty\}$, and the
edges are represented by hyperbolic geodesics. Then $\hp$
is completely partitioned by the ideal triangles whose sides are the
edges of the Farey graph. We call those triangles as \textbf{Farey
  triangles}.  In the rest of the paper, we will regard the Farey graph as
an embedded graph in $\overline{\mathbb{H}}$.

\begin{DEF}
  A \textbf{positive(negative) continued fraction} is a continued
  fraction with positive(negative, respectively) integral
  coefficients.  A \textbf{periodic continued fraction} is an infinite
  continued fraction whose coefficients eventually repeat. A continued fraction is
  denoted by the following notation:
  \[
    [a_0;a_1,a_2,a_3,\cdots,a_n] = a_0 + \frac{1}{a_1+\frac{1}{a_2+\frac{1}{\cdots+\frac{1}{a_n}}}}.
  \]
  Also, we introduce the \textit{bar notation} to represent periodic continued
  fractions:
  \[
    [a;b,\overline{x,y,z}]=[a;b,x,y,z,x,y,z,x,y,z,\cdots].
  \]
\end{DEF}

\begin{RMK}
  \begin{enumerate}
  \item Every negative continued fraction can be written as a negation
    of a positive continued fraction:
    \[
      [-b_0;-b_1,-b_2,\cdots,-b_n] = -[b_0;b_1,b_2,\cdots,b_n].
    \]
  \item Every positive(negative) real number has a positive(negative, respectively) continued fraction representation. For example:
    \[
      \frac{5+\sqrt{17}}{6} = [1;1,\overline{1,11,1,2,5,1,5,2}], \quad \frac{5-\sqrt{17}}{6}=[0;6,\overline{1,5,2,1,11,1,2,5}].
    \]
  \item Lagrange showed that a continued fraction is periodic if and only if
it represents a \textit{quadratic irrational}. (See \cite{steinig1992proof}, for example.)
  \end{enumerate}
\end{RMK}


We now introduce the \textit{cutting sequence} of a geodesic in $\hp$.
A cutting sequence links the Farey graph with continued
fractions.  The following definition and Proposition
\ref{prop:CuttingAndPCF} are excerpted from \cite{series2015continued}.

\begin{DEF}[\cite{series2015continued}]
  The \textbf{cutting sequence} of $x \in \mathbb{R}$ is a
  sequence of $L$'s and $R$'s
  which are constructed in the following way:

  Join $x$ to any point on the imaginary axis $\mathbb{I}$ by a
  hyperbolic geodesic $\gamma$. This arc in $\hp$ cuts a succession of
  Farey triangles, which are ideal triangles, so $\gamma$ cuts exactly
  two edges of each Farey triangle. Then the two edges meet in a
  vertex $v$ on the left or right of the oriented geodesic $\gamma$. Label
  this vertex $v$ with $L$ or $R$ accordingly. (In the
  exceptional case in which $\gamma$ terminates in a vertex of the
  triangle, \textit{choose either $L$ or $R$}.) Then the resulting
  sequence $L^{n_0}R^{n_1}L^{n_2}\cdots$ is defined to be the cutting
  sequence of $x$.
\end{DEF}

\begin{RMK}
 The cutting sequence is independent of the choice of the initial point of $\gamma$ on $\mathbb{I}$.
\end{RMK}

The following proposition shows cutting sequences are closely
related to continued fractions.

\begin{PROP}[\cite{series2015continued}]
  \label{prop:CuttingAndPCF}
  Suppose $x>1$. Then $x$ has a cutting sequence as $L^{n_0}R^{n_1}L^{n_2}\cdots$, $n_i \in \mathbb{N}$
  if and only if $x = [n_0;n_1,n_2,\cdots]$, written in the positive continued fraction.
  Likewise, when $0<x<1$, $x$ has a cutting sequence $R^{n_1}L^{n_2}\cdots$, $n_i \in \mathbb{N}$,
  if and only if $x=[0;n_1,n_2,\cdots]$.
\end{PROP}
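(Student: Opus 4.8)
The plan is to prove both printed cases at once, as the single statement valid for every $x>0$: if $x=[n_0;n_1,n_2,\dots]$ with $n_0=\lfloor x\rfloor\ge 0$ and $n_i\ge 1$ for $i\ge 1$, then the cutting sequence of $x$ (read from $\mathbb{I}$ toward $x$) is $L^{n_0}R^{n_1}L^{n_2}\cdots$, and conversely. The two cases coincide with this: for $0<x<1$ one has $n_0=0$, so $L^{n_0}$ is empty and the sequence begins with $R^{n_1}$. The argument rests on two ingredients: a direct computation of the first block, and a self-similarity of the whole picture under a Farey-preserving isometry that strips off one continued-fraction coefficient at a time.

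First I would compute the first block. Orient $\gamma$ from $\mathbb{I}$ toward $x$. The triangles with apex $\infty$ are exactly $\{m,m+1,\infty\}$ for $m\in\mathbb{Z}$, bounded by the vertical edges $\{m,\infty\}=\{\operatorname{Re}(z)=m\}$. Since $n_0=\lfloor x\rfloor$, the arc $\gamma$ crosses precisely the vertical edges at $\operatorname{Re}=1,2,\dots,n_0$ and then leaves the apex-$\infty$ strip through the bottom edge $\{n_0,n_0+1\}$. In each of the triangles $\{0,1,\infty\},\dots,\{n_0-1,n_0,\infty\}$ the two edges cut by $\gamma$ meet at the common vertex $\infty$, which a direct check of the orientation places on the left of $\gamma$; this produces exactly $L^{n_0}$. (When $n_0=0$ this block is empty and the first triangle $\{0,1,\infty\}$ is instead exited through $\{0,1\}$, whose pivot $0$ lies on the right, giving the leading $R$.) In particular $n_0=\lfloor x\rfloor$ is recovered from the length of the first block.

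Next I would set up the recursion. After the first block, $\gamma$ has just crossed the edge $\{n_0,\infty\}$ into the triangle $\{n_0,n_0+1,\infty\}$. Consider the boundary map $x\mapsto\frac{1}{x-n_0}$, realized on $\hp$ by the orientation-reversing isometry $\psi(z)=\frac{1}{\bar z-n_0}$, whose action on $\overline{\mathbb{Q}}$ is by the matrix $\left(\begin{smallmatrix}0&1\\1&-n_0\end{smallmatrix}\right)\in\GL(2,\mathbb{Z})$ (determinant $-1$). Being in $\GL(2,\mathbb{Z})$, it preserves the relation $|ps-qr|=1$, hence maps the Farey tessellation to itself; one checks $\psi(\{n_0,\infty\})=\{\infty,0\}=\mathbb{I}$ and $\psi(\{n_0,n_0+1,\infty\})=\{0,1,\infty\}$, while $\psi(x)=\frac{1}{x-n_0}=[n_1;n_2,\dots]=:x_1>1$. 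Thus $\psi$ carries the tail of $\gamma$ (from the edge $\{n_0,\infty\}$ onward) to the standard geodesic from $\mathbb{I}$ toward $x_1$. Since $\det=-1$, $\psi$ is orientation-reversing, so it interchanges ``left'' and ``right'' and therefore swaps every $L$ with an $R$. Writing $\mathrm{swap}$ for this interchange, this gives the recursion
\[
  \mathrm{cut}(x)=L^{\lfloor x\rfloor}\cdot\mathrm{swap}\bigl(\mathrm{cut}(x_1)\bigr),\qquad x_1=\tfrac{1}{x-\lfloor x\rfloor},
\]
which is the geometric shadow of the Gauss-map step $x=n_0+1/x_1$ underlying the continued-fraction algorithm.

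Finally I would unfold the recursion. Iterating and applying the first-block computation at each stage, an induction on $k$ shows that the first $k$ blocks of $\mathrm{cut}(x)$ are $L^{n_0}R^{n_1}\cdots$ (alternating $L$/$R$) with exponents $n_0,\dots,n_{k-1}$, where $x=[n_0;n_1,\dots]$; the alternation is exactly the accumulation of one swap per step. Since every finite prefix is pinned down, the full semi-infinite sequence is determined, giving the forward direction, and the converse follows because $x\mapsto\mathrm{cut}(x)$ and the continued-fraction expansion are both produced by this same deterministic recursion with matching data at each stage. The main obstacle is the orientation bookkeeping: one must verify carefully that the correct self-map is the orientation-reversing $\psi(z)=1/(\bar z-n_0)$ (the naive M\"obius map $1/(z-n_0)$ sends $\hp$ to the lower half-plane) and that it genuinely swaps the $L/R$ labels, since this swap is what forces the characteristic alternation of the blocks. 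One should also dispatch the exceptional case where $\gamma$ runs into a vertex; this occurs only for rational $x$ (finite continued fraction), whereas in our intended application $x$ is a quadratic irrational, so the sequence is genuinely infinite and eventually periodic.
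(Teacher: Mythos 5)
Your argument is correct, and it is essentially the standard proof of this fact: the paper itself gives no proof (the proposition is quoted from Series's work on the modular surface), and your two ingredients --- the direct count of the $L^{n_0}$ block among the apex-$\infty$ triangles, followed by renormalization via the Farey-preserving, orientation-reversing isometry $z\mapsto 1/(\bar z-n_0)$ realizing the Gauss map and swapping $L$ with $R$ --- are exactly the mechanism in the cited source. You also correctly flag the two genuine subtleties (the need for the conjugated, orientation-reversing map rather than the naive M\"obius map, and the vertex/rational exceptional case), so nothing further is needed.
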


Therefore, \textit{the exponents in the cutting sequence of $x$ are the coefficients of the continued fraction representation of $x$}.

Since $x \mapsto -1/x$ is a half-turn in $\hp$ in the disk model,
the cutting sequence of $x$ and that of $-1/x$ are identical.
Using this observation, we can extend Proposition \ref{prop:CuttingAndPCF} to a situation with $x<0$.

\begin{COR}
  \label{cor:CuttingAndNCF}
  Suppose $x<-1$. Then $x$ has a cutting sequence $R^{n_1}L^{n_2}\cdots$, $n_i \in \mathbb{N}$
  if and only if $x = [-n_1;-n_2,\cdots]$, written in the negative continued fraction.
  Likewise, when $-1<x<0$, $x$ has a cutting sequence $L^{n_0}R^{n_1}L^{n_2}\cdots$, $n_i \in \mathbb{N}$,
  if and only if $x = [0;-n_0,-n_1,-n_2,\cdots]$, written in the negative continued fraction.
  In particular, for $x<0$, the coefficients of the positive continued fraction of $-1/x$
  and those of the negative continued fraction of $x$
  are identical up to translation of coefficients.
 \end{COR}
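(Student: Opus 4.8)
The plan is to reduce everything to Proposition \ref{prop:CuttingAndPCF} by exploiting the half-turn $x \mapsto -1/x$, which fixes the imaginary axis $\mathbb{I}$ and preserves the Farey tessellation together with the left/right labelling, so that (as recorded just above) $x$ and $-1/x$ share the same cutting sequence. The decisive observation is that whenever $x<0$ we have $-1/x>0$, so $-1/x$ falls precisely into the range governed by Proposition \ref{prop:CuttingAndPCF}; all that then remains is to convert the resulting \emph{positive} continued fraction of $-1/x$ into a \emph{negative} continued fraction of $x$, using the reciprocal operation on continued fractions together with the negation identity $[-b_0;-b_1,\cdots,-b_n]=-[b_0;b_1,\cdots,b_n]$ from the Remark.

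For the first statement I would suppose $x<-1$, so that $0<-1/x<1$. Then Proposition \ref{prop:CuttingAndPCF} says the cutting sequence of $-1/x$ is $R^{n_1}L^{n_2}\cdots$ if and only if $-1/x=[0;n_1,n_2,\cdots]$, and since $x$ and $-1/x$ have identical cutting sequences, the same characterization holds verbatim for $x$. Taking reciprocals converts $-1/x=[0;n_1,n_2,\cdots]$ into $-x=[n_1;n_2,n_3,\cdots]$, whereupon the negation identity yields $x=[-n_1;-n_2,-n_3,\cdots]$ as a negative continued fraction, which is exactly the claim. Because Proposition \ref{prop:CuttingAndPCF}, the half-turn equality, and the reciprocal/negation operations are all equivalences, the biconditional transfers intact.

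For the second statement I would suppose $-1<x<0$, so that $-1/x>1$; Proposition \ref{prop:CuttingAndPCF} then gives that the cutting sequence of $-1/x$, hence of $x$, is $L^{n_0}R^{n_1}L^{n_2}\cdots$ if and only if $-1/x=[n_0;n_1,n_2,\cdots]$. Taking reciprocals gives $-x=[0;n_0,n_1,n_2,\cdots]$, and negating gives $x=[0;-n_0,-n_1,-n_2,\cdots]$, as desired. The closing \emph{in particular} assertion is then immediate by inspection of the two computations: in each case the coefficients $n_i$ of the positive continued fraction of $-1/x$ are exactly the absolute values of the coefficients of the negative continued fraction of $x$, merely re-indexed by one place. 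The only genuinely delicate point is the half-turn step—one must be sure that $z\mapsto -1/z$, being orientation-preserving and fixing $i$, carries the oriented geodesic from $\mathbb{I}$ to $x$ onto the oriented geodesic from $\mathbb{I}$ to $-1/x$ \emph{without} interchanging the $L$ and $R$ labels—but this is precisely the observation stated before the corollary, so the rest is routine bookkeeping.
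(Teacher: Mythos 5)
Your proof is correct and follows exactly the route the paper intends: the paper itself offers no separate proof of Corollary~\ref{cor:CuttingAndNCF} beyond the preceding observation that the half-turn $x\mapsto -1/x$ preserves cutting sequences, and your argument simply carries out the resulting reduction to Proposition~\ref{prop:CuttingAndPCF} together with the reciprocal and negation identities for continued fractions. The bookkeeping in both cases ($x<-1$ and $-1<x<0$) is accurate, and you correctly identify and justify the only delicate point, namely that the orientation-preserving half-turn does not interchange the $L$ and $R$ labels.
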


 More generally, we can define a \textbf{cutting sequence} of an
 \textit{oriented bi-infinite geodesic} in $\hp$ as a bi-infinite sequence of
 $L$'s and $R$'s. To be precise, pick a point $x$ on a bi-infinite
 geodesic $g$, and split $g$ at $x$ into two geodesic rays $g_1$ and
 $g_2$ with induced orientation. We may assume that $g_1$ terminates
 at $x$, and $g_2$ begins at $x$.  Then the desired bi-infinite
 sequence is obtained by concatenating \textit{two cutting sequences}:
 The $L$-$R$ flipped sequence of the cutting sequence of the reverse of
 $g_1$, followed by the cutting sequence of $g_2$.

 As the beginning point of a geodesic only affects the initial part of
 a cutting sequence, two oriented geodesics with the same endpoints
 \textit{eventually} have the same cutting sequence: The following
 lemma will not be needed until Theorem \ref{thm:periodicBiInfiniteLadder}.
\begin{LEM}[\cite{series2015continued}]
  \label{lem:cuttingSeqCoincides}
  Let $\gamma$, $\gamma'$ be oriented geodesics in $\hp$ with the same endpoint.
  Then the cutting sequence of $\gamma$ and $\gamma'$ eventually coincide.
\end{LEM}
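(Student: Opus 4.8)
The plan is to show that both cutting sequences eventually agree with a single \emph{canonical} sequence attached to the common endpoint, namely the cutting sequence of that endpoint itself. Write $x \in \partial\hp = \overline{\mathbb{R}}$ for the common forward endpoint toward which $\gamma$ and $\gamma'$ are oriented. If $x \in \overline{\mathbb{Q}}$ is a vertex of the Farey graph, then a geodesic oriented toward $x$ enters the fan of Farey triangles based at $x$ and crosses only finitely many triangles before limiting onto $x$ (after a half-turn sending $x$ to $\infty$, the geodesic becomes a vertical ray, which stays inside a single triangle $(\infty,n,n+1)$ as it rises). Hence both cutting sequences are finite in the forward direction and coincide from some index on trivially. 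So I may assume $x$ is irrational; in particular $x \in \mathbb{R}$ is not a vertex.

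To build the canonical chain, I would recall that an irrational $x$ lies strictly inside a unique nested sequence of Farey intervals: edges $e_0, e_1, e_2, \dots$ of the tessellation whose endpoints are the successive continued-fraction convergents $p_n/q_n$ of $x$, with $x$ lying strictly between the endpoints of each $e_n$. Let $H_n \subset \overline{\mathbb{H}}$ be the crescent bounded by the geodesic $e_n$ and the boundary arc between its endpoints that contains $x$, and let $T_n$ be the Farey triangle lying between $e_{n-1}$ and $e_n$. Since consecutive convergents satisfy $|p_n q_{n+1} - p_{n+1} q_n| = 1$ and both converge to $x$, the crescents are nested, $H_0 \supset H_1 \supset \cdots$, and shrink to the single point $x$ in $\overline{\mathbb{H}}$. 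By Proposition \ref{prop:CuttingAndPCF} and Corollary \ref{cor:CuttingAndNCF}, the $L/R$ labels recorded while crossing $T_0, T_1, \dots$ are exactly the cutting sequence of $x$.

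The key step is to show that the forward part of \emph{any} geodesic terminating at $x$ eventually traverses precisely the tail $T_N, T_{N+1}, \dots$ of this chain. Because each $H_n$ is an open neighborhood of $x$ in $\overline{\mathbb{H}}$ and $\gamma(t) \to x$, the ray $\gamma$ is eventually contained in $H_n$; because the $H_n$ shrink to $\{x\}$, the fixed starting point of $\gamma$ lies outside $H_n$ for all large $n$. Two distinct geodesics meet at most once, so for each large $n$ the ray $\gamma$ crosses the edge $e_n$ exactly once and remains in $H_n$ thereafter. Hence $\gamma$ crosses $e_{N-1}, e_N, e_{N+1}, \dots$ in order for some $N$, which is to say the tail of the triangle sequence cut by $\gamma$ is $T_N, T_{N+1}, \dots$; the same holds for $\gamma'$ with some index $N'$.

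Finally, the $L/R$ label assigned to a crossing of $T_n$ depends only on $T_n$, on which of its edges are the entering and leaving edges, and on the direction of travel: the distinguished vertex is the common endpoint of the entering and leaving edges, and its side is determined once the orientation is fixed. Since $\gamma$ and $\gamma'$ both traverse each $T_n$, for $n \ge \max(N,N')$, from $e_{n-1}$ to $e_n$ oriented toward $x$, they assign the same label, which by the second paragraph is the $n$-th term of the cutting sequence of $x$. Thus both cutting sequences agree with that of $x$ from index $\max(N,N')$ on, proving the claim. I expect the main obstacle to be the third step: verifying cleanly that a geodesic converging to $x$ cannot re-cross the nested edges and must pick up exactly the canonical tail. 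The shrinking of the crescents $H_n$ to the single point $x$ — equivalently, the convergence of the continued-fraction convergents to $x$ — is what makes this work.
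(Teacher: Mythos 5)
The paper does not actually prove this lemma; the ``proof'' is just a pointer to \cite{series2015continued}, so there is no in-house argument to compare against. Your argument is correct and is essentially the standard proof behind the cited result: reduce to a common irrational forward endpoint $x$ (the rational case giving finite sequences), build the canonical nested sequence of separating Farey edges whose crescents shrink to $\{x\}$, use the fact that two geodesics in $\hp$ meet at most once to show any ray ending at $x$ crosses each of these edges exactly once and in order, conclude that both rays eventually traverse the identical tail of Farey triangles through identical entering/leaving edges, and note that the $L/R$ label of a traversal depends only on that combinatorial data. One slip worth fixing: the edges joining \emph{consecutive convergents} $p_{n-1}/q_{n-1}$ and $p_n/q_n$ do not cobound a single Farey triangle --- between two such edges there is a fan of $a_{n+1}$ triangles sharing the vertex $p_n/q_n$, with the semiconvergents $(kp_n+p_{n-1})/(kq_n+q_{n-1})$ as intermediate vertices, and these account for the whole block $L^{a_{n+1}}$ or $R^{a_{n+1}}$ of the cutting sequence. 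You should therefore take $(e_n)$ to be the full nested sequence of Farey edges separating $x$ from the basepoint (equivalently, the successive mediant refinements of the Farey intervals containing $x$; these are exactly the rungs of the ladder in the sense of Proposition \ref{prop:E-EdgesAreRungs}). Consecutive edges in that finer sequence do cobound a single triangle, your crescent argument applies to it without change, and the identification of the resulting label sequence with the cutting sequence of $x$ then comes out correctly.
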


\begin{proof}
  Refer to \cite{series2015continued}.
\end{proof}

\section{Ladder in Farey Graph}
\label{sec:LadderInFareyGraph}

\subsection{Ladder}
\label{ssec:Ladder}

In this section, we introduce a special class of subsets of Farey
graph, called \textit{ladders}. A ladder is a useful tool to analyze
geodesics in the Farey graph. Later on, we will see that a ladder is
\textit{geodesically convex}, so it captures all the geodesics joining
two vertices in $\mathcal{F}$.  Hatcher also introduced a ladder with
different terminology(`\textit{fan}') in
\cite{hatcher2017topology}. With ladders, he studied symmetries of
the Farey graph, which are applied to prove number theoretic results. We
focus on the dynamics of $\SL(2,\mathbb{Z})$ on the Farey graph.  To this end,
we establish a ladder which is stabilized by the action of a given
Anosov element.

\begin{DEF}
  Let $g$ be an oriented bi-infinite geodesic in $\hp$. Then we define
  the \textbf{ladder} associated with $g$ as the collection of all Farey
  triangles whose interior intersects with $g$.  Moreover, for any
  real numbers $x,y \in \partial\hp$, we define the \textbf{ladder}
  associated with $x,y$, denoted by $\mathcal{L}(x,y)$, as the ladder
  associated with the bi-infinite oriented geodesic joining from $x$ to $y$.
\end{DEF}

\begin{RMK}
  From the above definition, we specified the \textit{interior} of a Farey triangle and
  a geodesic $g$ must intersect. Thus, if $x,y$ are vertices in
  $\mathcal{F}$ which form two endpoints of a single Farey edge, then
  $\mathcal{L}(x,y) = \phi$.
\end{RMK}

Since a geodesic cannot pass all three sides of a single geodesic
triangle at once, we can characterize a ladder as following:

\begin{FACT}
  \label{fact:chrLadder}
  A ladder is a \emph{consecutive chain} of Farey triangles; i.e., a countable union
  of Farey triangles $\{\Delta_i\}$ such that $\Delta_i \cap \Delta_{i+1}$ is a single
  edge of the Farey graph and $\Delta_i \cap \Delta_j$ is either an empty set or a single point(which will be called as a pivot point, shortly.) if $|i-j| \ge 2$.
\end{FACT}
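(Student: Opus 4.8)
The plan is to order the triangles of the ladder along $g$ and then read off all three assertions from how a geodesic meets the ideal triangles of the tessellation. First I would dispose of the degenerate case: if the endpoints of $g$ are joined by a single Farey edge the ladder is empty (by the Remark following the definition) and there is nothing to prove, so I assume $g$ is not a Farey edge. Then $g$ and any Farey edge are two distinct complete geodesics of $\hp$, hence meet in at most one point; in particular $g$ never runs along a Farey edge. Since an ideal triangle is geodesically convex, for each Farey triangle $\Delta$ the intersection $g \cap \mathrm{int}(\Delta)$ is connected, so it is either empty or a single open subarc of $g$. The triangles of the ladder are exactly those $\Delta$ for which this subarc is nonempty, and since distinct triangles have disjoint interiors these subarcs are pairwise disjoint; because the Farey tessellation is locally finite, every compact subarc of $g$ meets only finitely many of them. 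I can therefore parametrize $g$ from $x$ to $y$ and index the triangles $\{\Delta_i\}$ by the order in which $g$ enters their interiors, obtaining a (finite, singly infinite, or bi-infinite) chain indexed by consecutive integers; countability is automatic, as the whole tessellation is countable.

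Next I would analyze the crossing points. Whenever the subarc in $\Delta_i$ ends, $g$ leaves $\mathrm{int}(\Delta_i)$ through a point $p_i$ lying on a Farey edge $e_i$; because the vertices of the tessellation are ideal points of $\partial\hp$ and $g$ meets $\partial\hp$ only at its endpoints, $p_i$ lies in the relative interior of $e_i$, not at a vertex. Just after $p_i$ the geodesic enters the interior of the unique other triangle adjacent to $e_i$, which must be the next triangle $\Delta_{i+1}$ in the ordering (there is no gap, since $g$ cannot linger on an edge). Hence $\Delta_i$ and $\Delta_{i+1}$ share the edge $e_i$; and because two distinct triangles of a tessellation meet in at most one edge, this yields $\Delta_i \cap \Delta_{i+1} = e_i$, a single Farey edge.

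Finally, for the non-consecutive case I would show that triangles $\Delta_i, \Delta_j$ with $|i-j|\ge 2$ cannot share an edge. Two distinct triangles of the tessellation meet in nothing, a single vertex, or a single edge, so it suffices to rule out a common edge. Suppose $\Delta_i$ and $\Delta_j$ shared an edge $e$. A Farey edge is a \emph{complete} geodesic, reaching $\partial\hp$ at both ends, so the line it spans separates $\hp$ into two half-planes with $\Delta_i$ and $\Delta_j$ on opposite sides. As $g$ passes through the interiors of both triangles, it must cross this separating line, i.e.\ cross $e$ itself; but then $e = e_k$ for some $k$, and $\{\Delta_i,\Delta_j\}$ is exactly the pair of triangles adjacent to $e_k$, forcing $|i-j|=1$, a contradiction. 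Thus for $|i-j|\ge 2$ the intersection is empty or a single (pivot) point, which completes the proof. The only delicate points are the ordering step --- which rests on local finiteness of the tessellation and on the crossing points lying interior to edges rather than at ideal vertices --- and the use of completeness of Farey edges in the last paragraph; everything else is the elementary combinatorics of a triangle tessellation.
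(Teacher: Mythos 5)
Your argument is correct, and it is essentially the elaboration of the one observation the paper itself relies on (a geodesic crosses exactly two sides of each ideal triangle it meets), since the paper states this Fact without a written proof. Your additional care about ordering via local finiteness, crossing points lying in edge interiors, and the separation argument for $|i-j|\ge 2$ fills in exactly the details the paper leaves implicit, so no gap remains.
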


From this fact, now we can further define each component of a ladder:
\begin{DEF}  
\begin{figure}[h]\centering    
  \includegraphics[width = .75\linewidth]{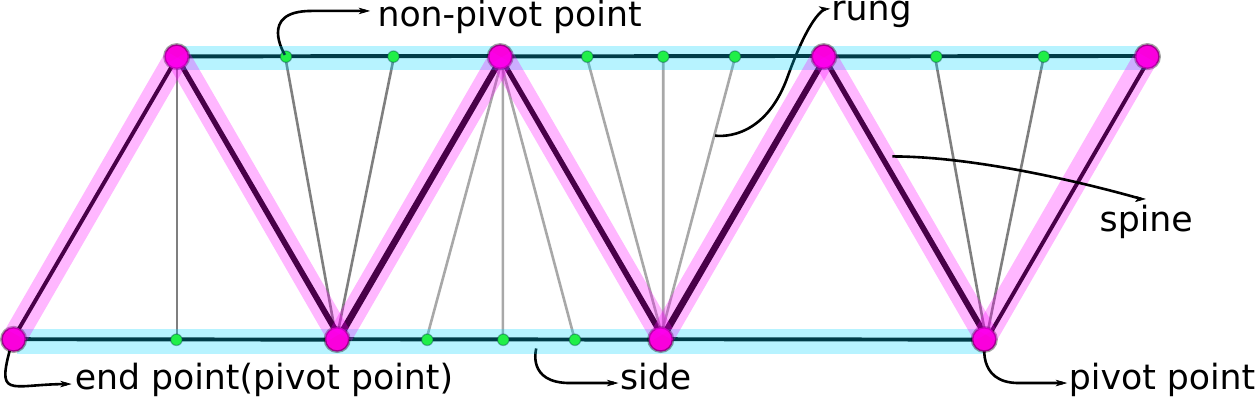}
  \caption[ladderDef]{Components of Ladder}
  \label{fig:ladder_def}
\end{figure}
An \textbf{endpoint} of a ladder is a degree 2 vertex of a ladder.
Connect two endpoints with an oriented geodesic $g$. While recording
the cutting sequence of $g$, call every $L$ or $R$-labeled vertex as
\textbf{a pivot point}.  Include two endpoints as pivot points as
well. An edge in a ladder is called a \textbf{rung} if its
\textit{interior} and $g$ intersect.

The \textbf{spine} $K$ of a ladder $\mathcal{L}$ is the path in $\mathcal{L}$ with the following property:
\begin{itemize}
\item The beginning point and the terminal point of $K$ are endpoints of $\mathcal{L}$.
\item All the vertices of $K$ are \textit{exactly all the pivot points} in $\mathcal{L}$.
\item All the edges of $K$ except for the initial and final one are rungs of $\mathcal{L}$.
\end{itemize}
For the uniqueness of a spine for each ladder, we define a spine for the following exceptional case as Figure \ref{fig:1-1_ladder}. See the remark below for the uniqueness of other ladders.

\begin{figure}[h]\centering    
  \includegraphics[width =0.3\linewidth]{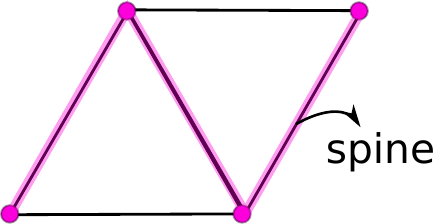}
  \caption[ladderType]{Spine for $(1,1)$-type ladder.}
  \label{fig:1-1_ladder}
\end{figure}

Then the \textbf{side} of a ladder is a connected component of the union of all non-rung edges which do not belong to the spine of the ladder.
See Figure \ref{fig:ladder_def}.
\end{DEF}

\begin{RMK}
  In fact, if a ladder is other than $(1,1)$-type, then its spine is
  uniquely determined.  This is because except for the $(1,1)$-type
  ladder, we can connect each pivot point to another pivot point via the
  unique rung, except two endpoints. After joining them via rungs, we
  get a path $\mathcal{P}'$ in the ladder except for two endpoints.
  Now we can attach two endpoints to $\mathcal{P}$ in a
  unique way. See Figure \ref{fig:spine_uniqueness} for the
  illustration of this construction of a spine.
\end{RMK}

\begin{figure}[h]\centering    
  \includegraphics[width =\linewidth]{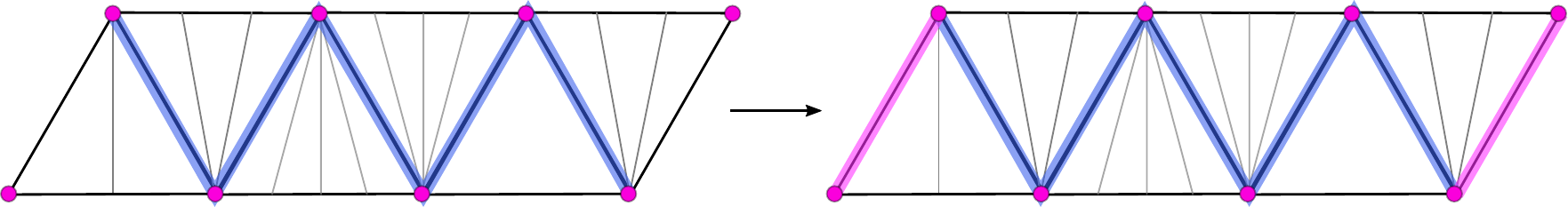}
  \caption[ladderType]{How spine can be formed uniquely: Connect each pivot point to another point via the unique rung(left) and then attach two endpoints to the path(right).}
  \label{fig:spine_uniqueness}
\end{figure}

We can specify the \textit{type} of a ladder associated with a geodesic by counting a sequence of
consecutive Farey triangles sharing the same pivot point.
\begin{DEF}
  
\begin{figure}[h]\centering    
  \includegraphics[width =\linewidth]{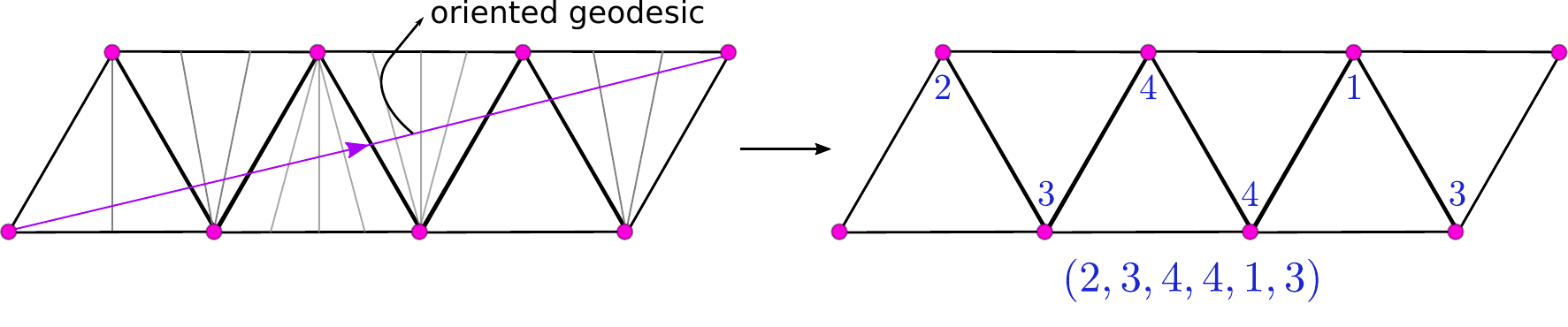}
  \caption[ladderType]{Type of Ladder}
  \label{fig:ladder_type}
\end{figure}
We say a ladder is of \textbf{type} $(a_1,\cdots,a_n)$ if the
ladder has $a_1,\cdots,a_n$ consecutive numbers of Farey triangles with
the same pivot points read off in the orientation given to the
geodesic. As an example, see Figure \ref{fig:ladder_type}. In this case, we call $n$ as the
\textbf{length} of the ladder.
\end{DEF}


  Immediate from the definitions, the number of Farey triangles
  sharing the same pivot points should coincide with the cutting
  sequence associated with $g$.
\begin{PROP}
  \label{prop:cuttingSeqAndLadder}
  Let $g$ be an oriented bi-infinite geodesic in $\hp$. Then
  the \emph{type} of the ladder associated with $g$ is identical to the \emph{exponents} of
  the cutting sequence of $g$.
\end{PROP}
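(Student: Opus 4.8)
The plan is to track how the \emph{pivot point} (the $L$/$R$-labeled apex vertex) behaves as the oriented geodesic $g$ passes from one Farey triangle to the next, and to show that the pivot point stays the same precisely when the cutting-sequence letter does not change. Granting this local statement, a maximal block of consecutive triangles sharing one pivot point corresponds exactly to a maximal run $L^{n_i}$ or $R^{n_i}$ of a single letter in the cutting sequence, so that reading block sizes in the orientation of $g$ reproduces both the type $(a_1,\dots,a_n)$ of the ladder and the exponent sequence $(n_0,n_1,\dots)$ of the cutting sequence, forcing them to agree.

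First I would fix notation. By Fact \ref{fact:chrLadder}, let $\{\Delta_i\}$ be the consecutive chain of triangles whose interiors meet $g$, let $e_i=\Delta_{i-1}\cap\Delta_i$ be the rung through which $g$ crosses from $\Delta_{i-1}$ into $\Delta_i$, and let $v_i$ be the apex of $\Delta_i$, i.e. the vertex where the two edges of $\Delta_i$ cut by $g$ meet. By the definition of the cutting sequence the letter assigned to $\Delta_i$ is $L$ or $R$ according to the side of $g$ on which $v_i$ lies, and $v_i$ is exactly the pivot point recorded for $\Delta_i$.

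The heart of the argument is the transition from $\Delta_i$ to $\Delta_{i+1}$. The shared rung $e_{i+1}$ is one of the two edges of $\Delta_i$ cut by $g$, so one of its endpoints is $v_i$; write its endpoints as $v_i$ and $w$, and let $u$ be the third vertex of $\Delta_{i+1}$. Since $g$ enters $\Delta_{i+1}$ through $e_{i+1}$ and leaves through one of the remaining edges $v_i u$ or $w u$, the new apex $v_{i+1}$ — being the common vertex of the entry and exit edges — must be either $v_i$ or $w$, the two endpoints of $e_{i+1}$. If $v_{i+1}=v_i$, the pivot point is unchanged and, being literally the same point on the same side of $g$, it carries the same letter. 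If $v_{i+1}=w$, then since $g$ crosses the edge $e_{i+1}$ joining $v_i$ and $w$, the points $v_i$ and $w$ lie on opposite sides of $g$, so the letter flips. Hence the pivot point is preserved if and only if the letter is preserved.

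The step I expect to require the most care is precisely this transition analysis: the claim that the new apex must be an endpoint of the crossed rung, and that the two endpoints of any edge crossed by $g$ lie on opposite sides of the oriented geodesic. Both are elementary facts about a geodesic traversing a chain of ideal triangles, but they must be stated cleanly to keep the $L$/$R$ bookkeeping unambiguous; in particular, at the exceptional vertices where the definition permits a free choice of $L$ or $R$ one should note that this choice alters neither the pivot-point blocks nor the runs, so the counts are unaffected. Assembling the local statement, grouping the $\Delta_i$ into maximal blocks of constant pivot point partitions the cutting sequence into its maximal single-letter runs, and the matching block lengths give the desired identification of the type of the ladder with the exponents of the cutting sequence.
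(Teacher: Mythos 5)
Your argument is correct: the observation that the new apex must be an endpoint of the crossed rung, together with the fact that the two endpoints of a crossed edge lie on opposite sides of $g$, is exactly the verification needed to match maximal constant-pivot blocks with maximal single-letter runs. The paper states this proposition without proof, calling it immediate from the definitions, and your write-up is precisely the unwinding of those definitions that justifies that claim.
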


\subsection{Geodesics in Ladder}
\label{ssec:GeodesicsInLadder}
For two points $x,y \in \partial\overline{\mathbb{H}}$, define $E(x,y)$ as the set of all edges in $\mathcal{F}$ separating $x,y$.
This definition of $E(x,y)$ generalizes that of \cite{minsky1996geometric}, in which $E(x,y)$ is only defined
for $x,y$ being vertices of the Farey graph. This slightly generalized approach is to handle bi-infinite geodesics.
\begin{PROP}
  \label{prop:E-EdgesAreRungs}
 All edges in $E(x,y)$ are in one-to-one correspondence with the rungs in $\mathcal{L}(x,y)$.
\end{PROP}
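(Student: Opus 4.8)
The plan is to reduce both notions---separating $x$ from $y$, and being a rung of $\mathcal{L}(x,y)$---to a single combinatorial condition on ideal endpoints, namely the \emph{linking} of the endpoints of a Farey edge with the pair $\{x,y\}$ on the circle $\partial\hp$. The basic tool is the standard fact from hyperbolic geometry: two complete geodesics in $\hp$ with distinct ideal endpoints meet transversally in the interior if and only if their pairs of endpoints alternate (link) along $\partial\hp$; otherwise they are disjoint in $\hp$ or share an ideal endpoint. In particular two distinct geodesics are never tangent, so ``interior intersects $g$'' unambiguously means a single transversal crossing.

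First I would translate the definition of $E(x,y)$. An edge $e$ with ideal endpoints $\{p/q,\,r/s\}$ is a complete geodesic cutting $\overline{\mathbb{H}}$ into two closed half-planes, and by definition $e$ separates $x$ and $y$ exactly when $x$ and $y$ lie in the interiors of different half-planes. This is precisely the statement that $\{p/q,\,r/s\}$ links $\{x,y\}$ on $\partial\hp$, which by the criterion above holds if and only if the geodesic $g$ from $x$ to $y$ crosses the interior of $e$. The degenerate configurations are consistent: when $x$ or $y$ is a vertex of $e$ the endpoints fail to link and $e$ fails to separate them, while $g$ meets $\partial\hp$ only at $x$ and $y$, so $g$ never passes through an interior Farey vertex.

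Next I would identify the crossing edges with the rungs. By definition a rung of $\mathcal{L}(x,y)$ is an edge of a Farey triangle lying in the ladder whose interior meets $g$, so every rung crosses $g$. Conversely, suppose the interior of an edge $e$ meets $g$. Since the Farey triangles tile $\hp$ and $e$ is a side of exactly the two triangles $\Delta_1,\Delta_2$ on its two sides, the arc $g$ passes from the interior of $\Delta_1$ into the interior of $\Delta_2$ through the crossing point; hence both $\Delta_1$ and $\Delta_2$ lie in $\mathcal{L}(x,y)$ and $e=\Delta_1\cap\Delta_2$ is an edge of the ladder, so $e$ is a rung. Therefore the rungs of $\mathcal{L}(x,y)$ are exactly the edges of $\mathcal{F}$ whose interior crosses $g$.

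Combining the two steps, $E(x,y)$ and the set of rungs of $\mathcal{L}(x,y)$ coincide as sets of edges of $\mathcal{F}$, so the asserted one-to-one correspondence is simply the identity map. The step I expect to require the most care is the boundary bookkeeping: pinning down the linking criterion precisely in $\overline{\mathbb{H}}$ and checking the degenerate cases in which $x$ or $y$ is rational and coincides with an endpoint of a candidate edge, so that the separation condition and the crossing condition agree exactly rather than merely generically.
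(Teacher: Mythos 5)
Your proposal is correct and follows essentially the same route as the paper: both directions reduce to the observation that an edge separates $x$ from $y$ exactly when the geodesic $\overline{xy}$ crosses its interior, and that such a crossing forces the two Farey triangles adjacent to the edge into the ladder, making the edge a rung. Your extra care with the linking criterion and the degenerate endpoint cases only makes explicit what the paper's shorter argument leaves implicit.
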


\begin{proof}
  Denote by $\overline{xy}$ the bi-infinite geodesic connecting $x$ and $y$ in $\hp$.
  Firstly, pick an edge $e \in E(x,y)$. As $e$ separates $\hp$ into 2 components, $x,y$ fall in
  different parts of them. Thus the interior of $e$ must intersect with $\overline{xy}$.
  Therefore, $e$ must be a rung of $\mathcal{L}(x,y)$.

  Conversely, pick any rung $r$ from $\mathcal{L}(x,y)$. Then there
  are exactly two Farey triangles incident with $r$, and they meet
  $\overline{xy}$ by the definition of $\mathcal{L}(x,y)$. Since those
  two triangles reside in different components separated by $r$, it
  follows that $x,y$ also must be separated by $r$. Therefore,
  $r \in E(x,y)$.
\end{proof}

When it comes
to considering geodesics in the whole Farey graph, Proposition \ref{prop:E-EdgesAreRungs} and the following
fact from \cite{minsky1996geometric} justify the reason why it
suffices to consider only the geodesics in ladders.

\begin{FACT}[\cite{minsky1996geometric}]
  \label{fact:Minsky}
  Let $x,y$ be two vertices of the Farey graph. Then for any geodesic path
  $\mathcal{P}$ joining $x$ and $y$ in $\mathcal{F}$, each vertex of
  $\mathcal{P}$ other than $x$ and $y$ must be incident with some
  edge in $E(x,y)$.
\end{FACT}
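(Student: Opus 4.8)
The plan is to argue by contradiction using the planar structure of the Farey triangulation. Fix a geodesic path $\mathcal{P} = (x = v_0, v_1, \ldots, v_n = y)$ in $\mathcal{F}$ and an interior vertex $v = v_i$ with $1 \le i \le n-1$, and assume toward a contradiction that $v$ is incident to no edge of $E(x,y)$. The first step is a clean reformulation of this hypothesis. The neighbors of $v$ in $\mathcal{F}$ form a bi-infinite list $\ldots, w_{-1}, w_0, w_1, \ldots$ of points of $\partial\hp$ accumulating on $v$ from both sides, with consecutive neighbors $w_j, w_{j+1}$ spanning a Farey edge (the base of the triangle $v w_j w_{j+1}$). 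An edge $vw_j$ lies in $E(x,y)$ precisely when the chord $\overline{vw_j}$ separates $x$ from $y$, equivalently when $w_j$ lies between $x$ and $y$ along the arc $\partial\hp \setminus \{v\}$. Hence the assumption that no edge at $v$ separates $x$ and $y$ is equivalent to saying that $x$ and $y$ lie in a common closed gap $[w_a, w_{a+1}]$; in particular the single Farey edge $e = \overline{w_a w_{a+1}}$ separates $v$ from both $x$ and $y$.

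Next I would run a Jordan-curve argument, exactly as in the proof of Proposition \ref{prop:E-EdgesAreRungs}. The chord $e$ cuts $\overline{\mathbb{H}}$ into two half-disks, one containing $v$ and the other containing $x$ and $y$. Because the edges of $\mathcal{F}$ are pairwise non-crossing (they are the edges of the ideal triangulation), no edge of the path $\mathcal{P}$ can cross the interior of $e$; therefore any step of $\mathcal{P}$ that passes from the $x,y$-side to the $v$-side must do so through one of the two endpoints $w_a, w_{a+1}$ of $e$. Since $\mathcal{P}$ starts at $x$ and ends at $y$ (both on the far side of $e$) but visits $v$ (on the near side), it must cross $e$ at least twice: once along the subpath $v_0, \ldots, v_i$ and once along $v_i, \ldots, v_n$. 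Thus there are indices $j < i < k$ with $v_j, v_k \in \{w_a, w_{a+1}\}$.

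Finally I would extract the contradiction. If $v_j = v_k$ then $\mathcal{P}$ repeats a vertex, impossible for a geodesic. If $v_j \ne v_k$ then $\{v_j, v_k\} = \{w_a, w_{a+1}\}$, and since $w_a w_{a+1}$ is a Farey edge we could replace the segment $v_j, \ldots, v_k$ (of length $k - j \ge 2$) by the single edge $v_j v_k$, contradicting that $\mathcal{P}$ is geodesic. Either way the assumption fails, so $v$ is incident to some edge of $E(x,y)$. The only genuine bookkeeping is the degenerate case in which $x$ or $y$ coincides with a neighbor $w_a$ of $v$: then $v$ is adjacent to an endpoint of $\mathcal{P}$, and geodesy forces that endpoint to be $v_{i\pm 1}$, after which the same adjacency-plus-shortcut reasoning applies. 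I expect the main obstacle to be phrasing the planarity step rigorously---pinning down that an edge-path crossing the separating chord $e$ must pass through an endpoint of $e$---since everything downstream is a short counting argument once that is in hand.
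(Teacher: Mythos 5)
The paper does not actually prove this statement---it is quoted as a Fact with a citation to \cite{minsky1996geometric}---so there is no in-paper argument to compare against; judged on its own, your proof is correct and is essentially the standard argument for this fact. The one nontrivial structural input is your opening claim that the link of a vertex $v$ of $\mathcal{F}$ is a bi-infinite path $\ldots,w_{-1},w_0,w_1,\ldots$ whose vertices accumulate only at $v$ from both sides, with consecutive neighbors spanning a Farey edge; this is true (the neighbors of $p/q$ are $\tfrac{r+kp}{s+kq}$, $k\in\mathbb{Z}$, for a fixed neighbor $r/s$, and consecutive ones have determinant $\pm1$) but it is doing all the work in the reduction to ``$x$ and $y$ lie in a common closed gap $[w_a,w_{a+1}]$,'' so it deserves a line of justification in a written-up version. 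From there the Jordan-curve step (Farey edges are pairwise non-crossing, so $\mathcal{P}$ can only pass from the $v$-side of $e=\overline{w_aw_{a+1}}$ to the $x,y$-side through $w_a$ or $w_{a+1}$) and the repeat-or-shortcut dichotomy are airtight. Your deferred degenerate case also closes up as claimed: if $x=w_a$ then geodesy forces $v=v_1$, the tail of $\mathcal{P}$ must meet $\{w_a,w_{a+1}\}$, meeting $w_a$ repeats $x$, and meeting $w_{a+1}$ at some $v_k$ with $k\ge2$ lets the Farey edge $\overline{w_aw_{a+1}}$ shortcut $v_0,\ldots,v_k$ to a single edge. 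No gaps.
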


\begin{COR}[Ladder is Geodesically Convex]
  \label{cor:LgeodIffFgeod}
  Let $x,y$ be two vertices of the Farey graph,
  and $\mathcal{P}$ be a path joining $x$ and $y$ in $\mathcal{F}$.
  Then $\mathcal{P}$ is a geodesic in $\mathcal{F}$
  if and only if $\mathcal{P}$ is a geodesic in the ladder $\mathcal{L}(x,y)$.
  In other words, a ladder is \emph{geodesically convex} in $\mathcal{F}$.
\end{COR}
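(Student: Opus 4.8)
The plan is to reduce everything to the single assertion that the path metrics of $\mathcal{F}$ and of the ladder agree on $x,y$, and that every $\mathcal{F}$-geodesic actually lies inside $\mathcal{L}(x,y)$. Write $d_{\mathcal{L}}$ for the path metric of the ladder. Since $\mathcal{L}(x,y)$ is a subgraph of $\mathcal{F}$, any path in the ladder is a path in $\mathcal{F}$, so the inequality $d_{\mathcal{L}}(x,y) \ge d_{\mathcal{F}}(x,y)$ is automatic; the content of the corollary is the reverse inequality together with the containment of geodesics.

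First I would show that every geodesic $\mathcal{P}$ from $x$ to $y$ in $\mathcal{F}$ is contained in $\mathcal{L}(x,y)$. For the \emph{vertices} this is immediate from the results already established: by Fact \ref{fact:Minsky} every interior vertex of $\mathcal{P}$ is incident to an edge of $E(x,y)$, and by Proposition \ref{prop:E-EdgesAreRungs} every such edge is a rung of $\mathcal{L}(x,y)$, so each interior vertex is a vertex of the ladder, while $x$ and $y$ are its endpoints. The real work is to see that each \emph{edge} of $\mathcal{P}$ is an edge of the ladder. I would isolate this as a claim: if a Farey edge $e=\overline{uv}$ has both endpoints among the vertices of $\mathcal{L}(x,y)$, then $e$ is itself an edge of $\mathcal{L}(x,y)$.

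To prove the claim I would split on whether the oriented geodesic $g=\overline{xy}$ crosses $e$. If $g$ crosses the interior of $e$, then $e$ separates $x$ from $y$, so $e\in E(x,y)$ and $e$ is a rung by Proposition \ref{prop:E-EdgesAreRungs}. If $g$ does not cross $e$, I would exploit two structural facts: the Farey tessellation is a tiling of $\hp$ by non-crossing ideal triangles, and the Farey triangles of the ladder meeting a fixed vertex $u$ form a \emph{contiguous} fan, i.e.\ a connected sub-chain of $\{\Delta_i\}$ (this follows from Fact \ref{fact:chrLadder}). Because distinct Farey edges meet only at shared vertices, the edge $e$ cannot cross the boundary edges of this fan; enumerating the possibilities then forces $e$ to be either a rung of the fan at $u$ or the outer non-rung edge of one of its two extreme triangles, and in either case $e$ is an edge of a crossed triangle, hence of the ladder. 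This dichotomy --- in particular checking that no Farey edge can join two ladder vertices while escaping the ladder region --- is the step I expect to be the main obstacle, since it is where the planar combinatorics of the tessellation, rather than purely formal metric bookkeeping, is really used.

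Granting the containment, the corollary follows quickly. A Farey geodesic $\mathcal{P}$ now lies in the ladder and has length $d_{\mathcal{F}}(x,y)$, so $d_{\mathcal{L}}(x,y)\le d_{\mathcal{F}}(x,y)$; with the trivial inequality this gives $d_{\mathcal{L}}(x,y)=d_{\mathcal{F}}(x,y)$. For the forward implication, a path $\mathcal{P}$ that is an $\mathcal{F}$-geodesic lies in the ladder and has length $d_{\mathcal{F}}(x,y)=d_{\mathcal{L}}(x,y)$, so it is a ladder geodesic; for the reverse implication, a ladder geodesic is a path in $\mathcal{F}$ of length $d_{\mathcal{L}}(x,y)=d_{\mathcal{F}}(x,y)$, hence an $\mathcal{F}$-geodesic. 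This closes the equivalence.
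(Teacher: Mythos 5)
Your proposal follows essentially the same route as the paper's proof: Fact \ref{fact:Minsky} plus Proposition \ref{prop:E-EdgesAreRungs} place every interior vertex of an $\mathcal{F}$-geodesic on a rung of $\mathcal{L}(x,y)$, and the equivalence then reduces to comparing lengths of paths that live in both graphs. The one place you go beyond the paper is also the right place to worry: the paper passes directly from ``every vertex of $\mathcal{P}$ lies in $\mathcal{L}(x,y)$'' to ``$\mathcal{P}$ is a path in $\mathcal{L}(x,y)$,'' which tacitly uses your claim that a Farey edge joining two ladder vertices is itself a ladder edge. You correctly isolate this, but your sketch of it is not yet airtight: non-crossing of Farey edges only rules out an edge $e=\overline{uv}$ that \emph{crosses} the boundary of the fan at $u$, whereas the dangerous configuration is an edge that leaves the fan at the vertex $u$ itself and runs entirely outside the ladder region, so ``enumerating the possibilities'' still has a case to close. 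It does close: one clean way is to use that the dual graph of the Farey tessellation is a tree, so a ladder is the unique chain of triangles joining its first and last tiles; if $u$ and $v$ are ladder vertices joined by a Farey edge $e$, the triangle $T$ containing $e$ on the ladder's side of $e$ separates (in the dual tree) the triangles at $u$ from the triangles at $v$, hence $T$ belongs to the chain and $e$ is a ladder edge. With that lemma in hand, your metric bookkeeping at the end is correct and matches the paper's.
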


\begin{proof}
  Suppose $\mathcal{P}$ is a geodesic in $\mathcal{F}$.  Then by Fact
  \ref{fact:Minsky}, all the vertices in $\mathcal{P}$ must be
  incident with edges in $E(x,y)$, which are rungs of
  $\mathcal{L}(x,y)$ due to Proposition \ref{prop:E-EdgesAreRungs}.
  Thus, every vertex of $\mathcal{P}$ is contained in
  $\mathcal{L}(x,y)$, so $\mathcal{P}$ is a path in
  $\mathcal{L}(x,y)$.  It is a geodesic in $\mathcal{L}(x,y)$ as well,
  otherwise there is a shorter path than $\mathcal{P}$ connecting $x$
  and $y$ in $\mathcal{L}(x,y)$, thus in $\mathcal{F}$, which
  contradicts to the assumption that $\mathcal{P}$ is a geodesic in
  $\mathcal{F}$. Thus, $\mathcal{P}$ is a geodesic in
  $\mathcal{L}(x,y)$.

  Conversely, let $\mathcal{P}$ be a geodesic in $\mathcal{L}(x,y)$.
  Suppose there is a shorter path $\mathcal{P}'$ than $\mathcal{P}$ in
  $\mathcal{F}$ joining $x$ and $y$. Then by the same argument,
  $\mathcal{P}'$ is contained in $\mathcal{L}$, still shorter
  than $\mathcal{P}$, a contradiction. Therefore, $\mathcal{P}$ is a
  geodesic in $\mathcal{F}$.
\end{proof}

Here we prove a useful lemma which hints the \textit{hierarchy} between ladders.

\begin{LEM}
  \label{lem:SubladderLemma}
  Let $x,y$ be points in the boundary of $\hp$, and $r,s$ be two vertices of $\mathcal{L}(x,y)$.
  Then $\mathcal{L}(r,s)$ is contained in $\mathcal{L}(x,y)$.
\end{LEM}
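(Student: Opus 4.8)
The plan is to reduce the statement to the hyperbolic convexity of the closed region swept out by the ladder. Let $R = \bigcup_{\Delta \in \mathcal{L}(x,y)} \Delta \subseteq \overline{\mathbb{H}}$ denote the union of all the (closed, ideal-vertices-included) Farey triangles constituting $\mathcal{L}(x,y)$. First I would observe that $r$ and $s$, being vertices of the ladder, are ideal vertices of triangles in $\mathcal{L}(x,y)$, and hence lie in $R$. If I can show that $R$ is convex, then the geodesic $\overline{rs}$ joining $r$ to $s$ is contained in $R$, and the rest is bookkeeping.

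The heart of the argument is therefore the convexity of $R$. Here I would invoke Fact \ref{fact:chrLadder}: $\mathcal{L}(x,y)$ is a consecutive chain of Farey triangles glued along single edges, with nonconsecutive triangles meeting in at most a pivot point, so $R$ is an embedded topological disk whose frontier is a simple path of Farey edges --- precisely the two sides and the (at most two) end edges. Consequently $R$ is an ideal polygon: all of its vertices lie on $\partial\hp$ and its sides are complete geodesics. Since every interior angle of an ideal polygon is $0 \le \pi$, such a region is convex (equivalently, $R$ is the intersection of the closed half-planes bounded by its frontier edges, each taken on the side containing $R$). This reasoning applies whether the ladder is finite or bi-infinite; in the bi-infinite case $R$ is an infinite-sided ideal polygon whose vertices accumulate only at $x$ and $y$, and the intersection-of-half-planes description still yields convexity.

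With convexity in hand, I would close by a tiling comparison. We may assume $r \ne s$ and that $r,s$ are not joined by a Farey edge, since otherwise $\mathcal{L}(r,s) = \emptyset$ and there is nothing to prove. Let $\Delta'$ be any Farey triangle in $\mathcal{L}(r,s)$, so that $\mathrm{int}(\Delta')$ meets $\overline{rs} \subseteq R$. Because $\overline{rs}$ meets each Farey edge in at most one point, the segment $\mathrm{int}(\Delta') \cap \overline{rs}$ contains a point $z$ lying in no Farey edge; then $z \in \mathrm{int}(\Delta')$ and $z \in R = \bigcup_{\Delta \in \mathcal{L}(x,y)} \Delta$. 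Since $z$ avoids all Farey edges, it lies in the interior of some $\Delta \in \mathcal{L}(x,y)$, and as distinct Farey triangles have disjoint interiors, $\Delta' = \Delta \in \mathcal{L}(x,y)$. Hence $\mathcal{L}(r,s) \subseteq \mathcal{L}(x,y)$.

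I expect the main obstacle to be making the convexity step fully rigorous rather than pictorial --- specifically, verifying from Fact \ref{fact:chrLadder} that the frontier of $R$ is a genuinely simple (non-self-touching) arc, so that $R$ is an honest ideal polygon; the pivot points, where nonconsecutive triangles of the chain meet, are the configurations one must check do not pinch $R$ or disturb the cyclic order of its ideal vertices. An alternative route that sidesteps explicit polygon talk is to establish $E(r,s) \subseteq E(x,y)$ directly: any Farey edge separating $r$ from $s$ must, by convexity of $R$ together with $r,s \in R$, cross $\mathrm{int}(R)$ and hence be a rung of $\mathcal{L}(x,y)$ by Proposition \ref{prop:E-EdgesAreRungs}; then any triangle of $\mathcal{L}(r,s)$ has two edges lying in $E(r,s) \subseteq E(x,y)$, forcing $\overline{xy}$ through its interior. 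Either way, the crux remains the convexity of the ladder region.
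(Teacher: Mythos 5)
Your argument is correct in outline, but it takes a genuinely different route from the paper. The paper never touches convexity: it reduces the claim to showing that every \emph{rung} of $\mathcal{L}(r,s)$ is a rung of $\mathcal{L}(x,y)$ (non-rung edges being recovered by completing Farey triangles), and then runs a pure separation argument through Proposition \ref{prop:E-EdgesAreRungs} in both directions --- a rung $e$ of $\mathcal{L}(r,s)$ separates $r$ from $s$; if $e$ failed to separate $x$ from $y$, the geodesic $\overline{xy}$, and hence all of $\mathcal{L}(x,y)$, would lie in the closure of one component of $\hp\setminus e$ (Farey edges cannot cross), forcing $r$ and $s$ onto the same side of $e$, a contradiction. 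That proof is short, needs nothing beyond Proposition \ref{prop:E-EdgesAreRungs} and the non-crossing of Farey edges, and sidesteps entirely the issue you yourself flag as the main obstacle: verifying rigorously that the region $R$ swept out by the ladder is an honest ideal polygon, hence convex. Your route front-loads that stronger geometric fact (convexity of $R$ as an intersection of closed half-planes bounded by the frontier edges), after which your tiling comparison --- picking $z\in\mathrm{int}(\Delta')\cap\overline{rs}\subseteq R$ off every Farey edge and matching tiles by disjointness of interiors --- is clean and correct (in fact no Farey edge meets $\mathrm{int}(\Delta')$ at all, so any such $z$ works). What your approach buys is a reusable intermediate statement, the hyperbolic convexity of a ladder's underlying region, which is stronger than the lemma itself; what it costs is exactly the extra verification you identify, which the paper's separation argument makes unnecessary. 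If you want to keep your framework but close the gap cheaply, note that your concluding ``alternative route'' via $E(r,s)\subseteq E(x,y)$ is essentially the paper's proof once you replace the appeal to convexity of $R$ by the direct observation that an edge not separating $x$ from $y$ leaves the whole ladder, and in particular $r$ and $s$, on one side.
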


\begin{proof}
  It suffices to show every rung in $\mathcal{L}(r,s)$ is a rung of $\mathcal{L}(x,y)$,
  since non-rung edges of a ladder are fully determined with rungs by completing Farey triangles.
  
  Pick a rung $e$ from $\mathcal{L}(r,s)$. According to
  Proposition \ref{prop:E-EdgesAreRungs}, it suffices to show that $e$
  separates $x,y$. Let $A, B$ be two connected components of
  $\hp \setminus e$. Since $e$ is a rung of $\mathcal{L}(r,s)$, $e$ is
  disjoint from $r,s$. Thus each $A, B$ contains exactly one of $r,s$.
  Now suppose to the contrary $x,y$ are not separated by $e$. Then the
  interior of the bi-infinite geodesic $\overline{xy}$ in $\hp$ must
  reside in either $A$ or $B$. Hence, $\mathcal{L}(x,y)$ is
  contained in either $\overline{A}$ or $\overline{B}$, since two
  edges of the Farey graph cannot intersect in interior points. In
  particular, both vertices $r,s$ of $\mathcal{L}(x,y)$ should be 
  contained together in either $A$ or $B$, which is a contradiction.
\end{proof}

Recall that a \textbf{bi-infinite geodesic} in $\mathcal{F}$ is a path
in $\mathcal{F}$ whose every finite subpath is also a geodesic in
$\mathcal{F}.$ Note that a ladder associated with a bi-infinite geodesic
in $\mathcal{F}$ is bi-infinite as well.

Likewise, one can define a bi-infinite geodesic in a bi-infinite ladder.
It is Lemma \ref{lem:SubladderLemma} that makes this definition allowable.
\begin{DEF}
  Let $\mathcal{L}$ be a bi-infinite ladder in $\mathcal{F}$.  A
  bi-infinite path $\mathcal{P}$ in $\mathcal{L}$ is called a
  \textbf{bi-infinite geodesic} if for every pair of vertices $s,t$ of
  $\mathcal{P}$, the restriction $\mathcal{P}(s,t)$ of $\mathcal{P}$ to the subladder
  $\mathcal{L}(s,t)$ is again a geodesic in $\mathcal{L}$.
\end{DEF}

\begin{RMK}
  Since $\mathcal{L}(s,t)$ is geodesically convex by Corollary \ref{cor:LgeodIffFgeod}, the last statement of above definition can be reduced to showing $\mathcal{P}(s,t)$
  is a geodesic in $\mathcal{L}(s,t)$, instead of $\mathcal{L}$.
\end{RMK}

With putting more effort on the proof of Corollary \ref{cor:LgeodIffFgeod},
we can extend Corollary \ref{cor:LgeodIffFgeod} to general result involving bi-infinite objects.

\begin{PROP}
  \label{prop:GenLgeodIffFgeod}
  Let $x,y$ be \emph{irrational} points in $\partial\hp$(i.e., points
  in $\partial\hp$ which are not vertices of $\mathcal{F}$), and
  $\mathcal{P}$ be a bi-infinite path joining $x,y$ in the Farey graph.
  Then $\mathcal{P}$ is a geodesic in $\mathcal{F}$ if and only if
  $\mathcal{P}$ is a geodesic in the bi-infinite ladder
  $\mathcal{L}(x,y)$.
\end{PROP}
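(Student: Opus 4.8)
The plan is to reduce both implications to the already-established finite version, Corollary~\ref{cor:LgeodIffFgeod}, by cutting $\mathcal{P}$ into finite subpaths $\mathcal{P}(s,t)$ between pairs of its vertices $s,t$. Since such $s,t$ are genuine vertices of $\mathcal{F}$ (rational points), Corollary~\ref{cor:LgeodIffFgeod} applies verbatim to each $\mathcal{P}(s,t)$, and by the remark following the definition of a bi-infinite geodesic in a ladder it is harmless to test geodesity of $\mathcal{P}(s,t)$ inside $\mathcal{L}(s,t)$ rather than inside $\mathcal{L}(x,y)$. The only genuinely new input needed is that $\mathcal{P}$ actually lies inside $\mathcal{L}(x,y)$; everything else is bookkeeping.

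The backward implication is the easy one. Assuming $\mathcal{P}$ is a geodesic in $\mathcal{L}(x,y)$, every finite restriction $\mathcal{P}(s,t)$ is, by definition (and the remark), a geodesic in $\mathcal{L}(s,t)$, and Corollary~\ref{cor:LgeodIffFgeod} upgrades this to $\mathcal{P}(s,t)$ being a geodesic in $\mathcal{F}$. As every finite subpath of $\mathcal{P}$ arises this way between its own endpoints, $\mathcal{P}$ is a bi-infinite geodesic in $\mathcal{F}$.

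For the forward implication the crux is the sub-claim that \emph{every vertex $v$ of $\mathcal{P}$ is a vertex of $\mathcal{L}(x,y)$}. I would prove this by a limiting argument. Choose vertices $s_n,t_n$ of $\mathcal{P}$ on the two sides of $v$ with $s_n\to x$ and $t_n\to y$ (possible since $\mathcal{P}$ joins $x$ and $y$). Each $\mathcal{P}(s_n,t_n)$ is a finite geodesic in $\mathcal{F}$, so by Fact~\ref{fact:Minsky} the interior vertex $v$ is incident to some edge $e_n\in E(s_n,t_n)$. Normalizing $v=\infty$ by an element $g\in\PSL(2,\mathbb{Z})$ (which preserves $\mathcal{F}$, ladders, and all the sets $E(\cdot,\cdot)$), the edges at $v$ become the vertical edges $\overline{\infty m}$ with $m\in\mathbb{Z}$, and the condition $e_n\in E(s_n,t_n)$ becomes the existence of an integer $m_n$ strictly between $g(s_n)$ and $g(t_n)$ on the real line. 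Letting $n\to\infty$ we have $g(s_n)\to g(x)$ and $g(t_n)\to g(y)$, both finite irrationals (finite because $g(x)=\infty$ would force $x=v$, impossible as $x$ is irrational and $v$ is a vertex). Were there no integer strictly between $g(x)$ and $g(y)$, both would lie in a common gap $(k,k+1)$, forcing the open interval bounded by $g(s_n),g(t_n)$ to contain no integer for large $n$ and contradicting the existence of $m_n$. Hence an integer lies strictly between $g(x)$ and $g(y)$, giving an edge $\overline{\infty m}\in E(g(x),g(y))$ incident to $\infty$; pulling back by $g^{-1}$ produces an edge in $E(x,y)$ incident to $v$, which by Proposition~\ref{prop:E-EdgesAreRungs} is a rung of $\mathcal{L}(x,y)$, so $v\in\mathcal{L}(x,y)$.

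Granting the sub-claim, the forward implication follows. For any vertices $s,t$ of $\mathcal{P}$ we now have $s,t\in\mathcal{L}(x,y)$, so Lemma~\ref{lem:SubladderLemma} gives $\mathcal{L}(s,t)\subseteq\mathcal{L}(x,y)$; since $\mathcal{P}(s,t)$ is an $\mathcal{F}$-geodesic it lies in $\mathcal{L}(s,t)\subseteq\mathcal{L}(x,y)$ by Corollary~\ref{cor:LgeodIffFgeod}, and taking the union over all $s,t$ shows $\mathcal{P}\subseteq\mathcal{L}(x,y)$. Finally each $\mathcal{P}(s,t)$, being an $\mathcal{F}$-geodesic, is a geodesic in $\mathcal{L}(s,t)$ by the same corollary, which is exactly the definition of $\mathcal{P}$ being a geodesic in $\mathcal{L}(x,y)$. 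The main obstacle is the sub-claim: Fact~\ref{fact:Minsky} only speaks about rational endpoints, so one must pass from the finite ladders $\mathcal{L}(s_n,t_n)$ to the bi-infinite $\mathcal{L}(x,y)$, and it is precisely the irrationality of $x,y$ that lets the strict separation ``an integer lies between the endpoints'' survive the limit.
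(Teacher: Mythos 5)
Your proof is correct, and it follows the same basic strategy as the paper's: reduce to the finite statement, Corollary~\ref{cor:LgeodIffFgeod}, by restricting $\mathcal{P}$ to subpaths $\mathcal{P}(s,t)$ between its vertices and invoking the definition of a bi-infinite geodesic in a ladder. The difference is that the paper's proof is just this chain of equivalences and nothing more; it opens with ``let $\mathcal{P}$ be a bi-infinite path joining $x$ and $y$ \emph{in} $\mathcal{L}(x,y)$,'' quietly assuming the containment that your sub-claim establishes. For the forward direction that containment is genuinely needed: the definition of a bi-infinite geodesic in $\mathcal{L}(x,y)$ only applies to paths lying in $\mathcal{L}(x,y)$, and the step $\mathcal{L}(s,t)\subseteq\mathcal{L}(x,y)$ via Lemma~\ref{lem:SubladderLemma} requires $s,t$ to already be vertices of $\mathcal{L}(x,y)$. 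Your limiting argument --- truncating to $\mathcal{P}(s_n,t_n)$, applying Fact~\ref{fact:Minsky}, normalizing $v$ to $\infty$ so that incidence with $E(s_n,t_n)$ becomes ``an integer lies strictly between,'' and using the irrationality of $x,y$ to push the separation to the limit --- fills exactly this gap, and is incidentally the only place in either argument where the irrationality hypothesis does real work. So: same route, but your version is the complete one; the paper's buys brevity at the cost of leaving the containment of an $\mathcal{F}$-geodesic in the bi-infinite ladder unproved.
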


\begin{proof}
  Let $\mathcal{P}$ be a bi-infinite path joining $x$ and $y$ in
  $\mathcal{L}(x,y)$. For any two vertices $s,t$ in $\mathcal{P}$,
  denote by $\mathcal{P}(s,t)$ the induced subpath joining $s$ and
  $t$.

  Then by definition $\mathcal{P}$ is a bi-infinite geodesic in $\mathcal{L}(x,y)$
  if and only if $\mathcal{P}(s,t)$ is a geodesic in
  $\mathcal{L}(s,t)$ for any two vertices $s,t$ in $\mathcal{P}$.  By
  Corollary \ref{cor:LgeodIffFgeod}, This is equivalent to saying
  $\mathcal{P}(s,t)$ is a geodesic in $\mathcal{F}$ for any two
  vertices $s,t$ of $\mathcal{P}$ and by definition it is exactly the case when
  $\mathcal{P}$ is a bi-infinite geodesic in $\mathcal{F}$.
\end{proof}

\subsection{Efficient Geodesics in Ladder}
\label{sec:EffGeodesicsInLadder}



Let $g$ be a bi-infinite oriented geodesic in $\hp$ and $\mathcal{L}$
be its associated ladder. As a consecutive chain of Farey triangles,
we give the ladder $\mathcal{L}$ an orientation induced by the
orientation of $g$.  Then the Farey triangles and pivot points of
$\mathcal{L}$ have the induced order: Enumerate them as Figure \ref{fig:pivotOrder_ladder}.

Then for each $i$-th pivot point, the $(i+1)$-th one is called the
\textbf{adjacent pivot point on the other side} and the $(i+2)$-th one
is called the \textbf{adjacent pivot point on the same side}.
Those $(i+1)$-th and $(i+2)$-th pivot points are \textbf{adjacent} pivot points of $i$-th one.
Moreover, call the first pivot point as  \textbf{initial} and the last one as \textbf{final},
whose immediate predecessor is called \textbf{semi-final}. In other words,
the semi-final pivot point has the adjacent final point on the other side.
  
\begin{figure}[h]\centering    
  \includegraphics[width = .8\linewidth]{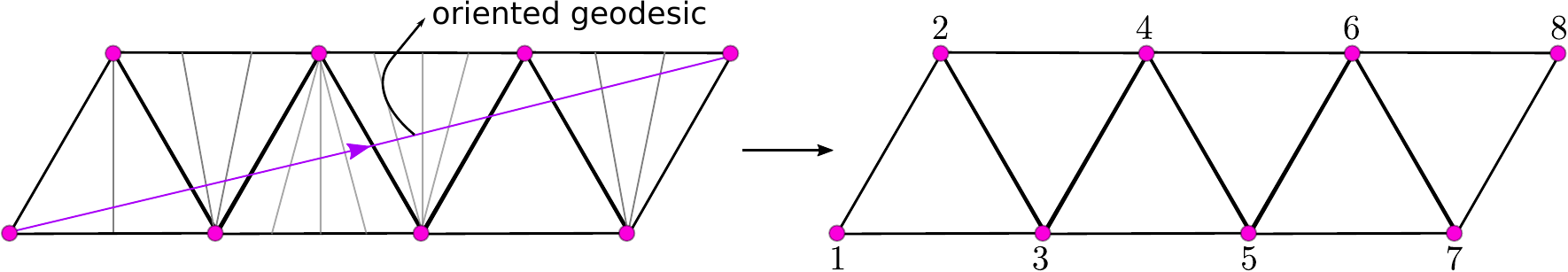}
  \caption[pivotOrderLadder]{An orientation of a ladder induces an
    order of pivot points. The point 6 is the adjacent
    pivot point on the same side of the pivot point 4, which is the adjacent pivot point
    on the other side of point 3.}
  \label{fig:pivotOrder_ladder}
\end{figure}

Now we define two key moves across pivot points, which will be a
building block for efficient moving.

\begin{DEF}
For each non-terminal pivot point, define the following two moves from
a pivot point to the next adjacent pivot point:
\begin{enumerate}
\item \textbf{Transverse}(`$t$'): Move to the adjacent pivot point on the other
  side.
\item \textbf{Pass}(`$p$'): Move to the adjacent pivot point on the same side.
\end{enumerate}

\begin{figure}[h]\centering    
  \includegraphics[width = .4\linewidth]{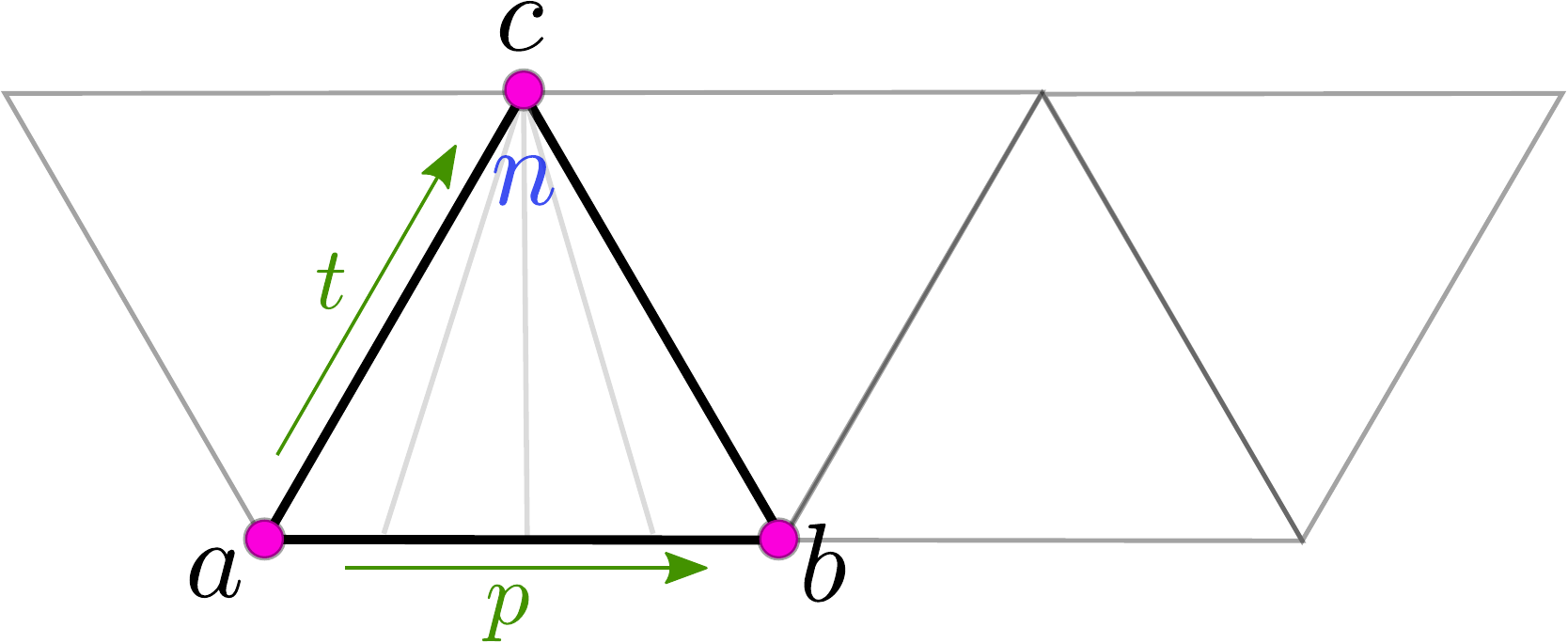}
  \caption[effGeodLadder]{The pivot point $a$ has two options to move toward adjacent pivot points: $b,c$.}
  \label{fig:effGeod_ladder}
\end{figure}

We say a path $\mathcal{P}$ in a ladder $\mathcal{L}$ satisfies the
\textbf{efficient moving condition} if for each move between adjacent
pivot points in $\mathcal{P}$,
  \begin{itemize}
  \item Move $t$ whenever the move $p$ passes through more than one
    edge in $\mathcal{L}$. ($n \ge 2$ in Figure
    \ref{fig:effGeod_ladder}), \textbf{or} the final point can be reached through the move $t$.
  \item Move $p$ whenever the move $p$ passes exactly one edge in
    $\mathcal{L}$ ($n = 1$ in Figure \ref{fig:effGeod_ladder}), \textbf{and} the final point
    of cannot be reached through the move $t$.
  \end{itemize}
  See Figure \ref{fig:effGeod_Examples} for more illustrations on the efficient moving condition.
\end{DEF}

\begin{figure}[p]
\begin{subfigure}{.5\textwidth}
  \centering
  \includegraphics[width=.8\linewidth]{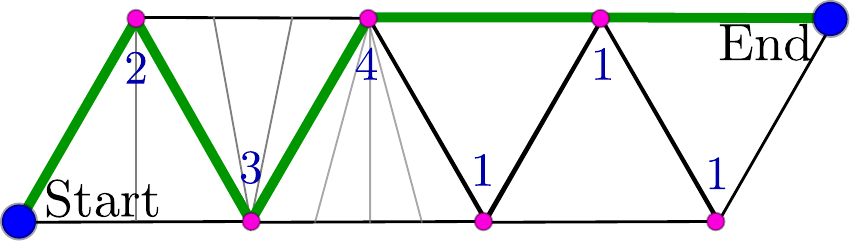}
  \caption{$tttpp$}
  \label{fig:effGeod_eg1}
\end{subfigure}%
\begin{subfigure}{.5\textwidth}
  \centering
  \includegraphics[width=.8\linewidth]{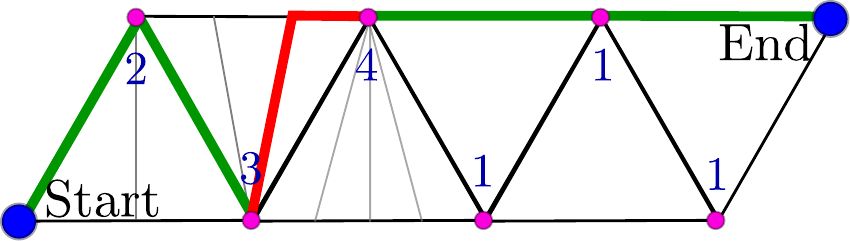}
  \caption{Forbidden path}
  \label{fig:effGeod_eg2}
\end{subfigure}%
\vspace{2em}
\begin{subfigure}{.5\textwidth}
  \centering
  \includegraphics[width=.8\linewidth]{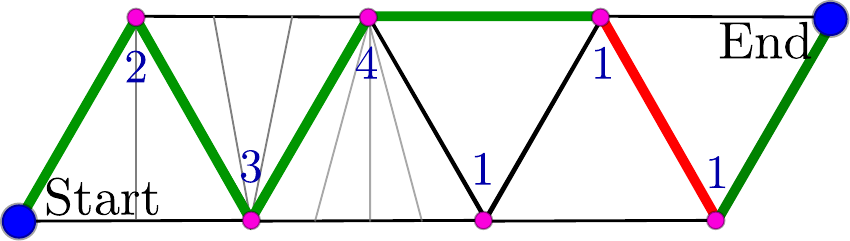}
  \caption{$tttptt$}
  \label{fig:effGeod_eg3}
\end{subfigure}%
\begin{subfigure}{.5\textwidth}
  \centering
  \includegraphics[width=.8\linewidth]{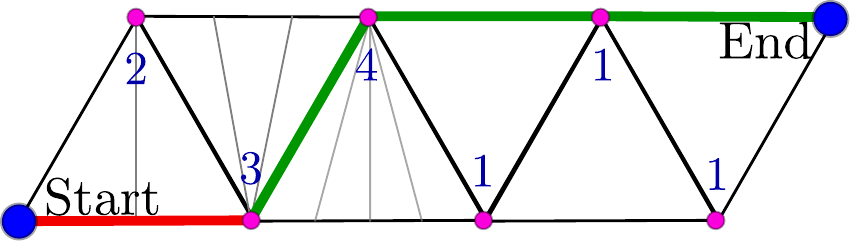}
  \caption{$ptpp$}
  \label{fig:effGeod_eg4}
\end{subfigure}%
\vspace{2em}
\begin{subfigure}{.5\textwidth}
  \centering
  \includegraphics[width=.8\linewidth]{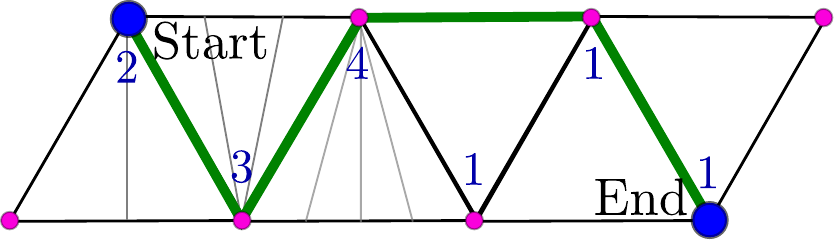}
  \caption{$ttpt$}
  \label{fig:effGeod_eg5}
\end{subfigure}%
\begin{subfigure}{.5\textwidth}
  \centering
  \includegraphics[width=.8\linewidth]{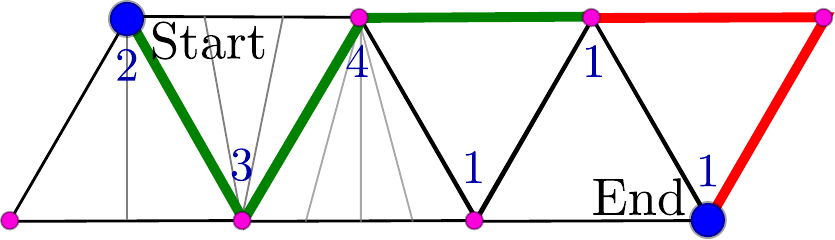}
  \caption{Forbidden path}
  \label{fig:effGeod_eg6}
\end{subfigure}%
\caption{Six paths in a $(2,3,4,1,1,1)$-ladder. Two blue points for
  each figure indicate the beginning points and the final points of
  paths. Green segments stand for efficient moves, and red ones for
  non-efficient moves. Only $(a)$ and $(e)$ satisfy the efficient
  moving condition. $(b)$ commits forbidden moves; the path contains a
  non-pivot point. $(c)$ fails to satisfy the efficient moving
  condition at $1$-block near the end; it should have moved
  $p$. Similarly, $(d)$ should have moved $t$ at the beginning; note
  that, however, it \textit{is} a geodesic, so some geodesics may not meet
  the efficient moving condition. (See the remark after the proof of Proposition
  \ref{prop:EffmovingIsGeod}.) The problem in $(f)$ is regarding the
  final point. It fails to move $t$ at the end of the path; even
  though the next one is a 1-block, it should have moved $t$ since the
  final point could be reached through the move $t$.}
\label{fig:effGeod_Examples}
\end{figure}

Observe that the efficient moving condition is indeed \textit{efficient} in a sense of the following proposition: It yields a geodesic.

\begin{PROP}
  \label{prop:EffmovingIsGeod}
  If a path in a ladder satisfies the efficient moving condition then
  it is a geodesic in the ladder.
\end{PROP}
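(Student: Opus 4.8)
The plan is to show that the length of an efficient path equals the distance in the ladder between its two endpoint pivots, which I call $x$ and $y$. First observe that every move an efficient path makes is a \emph{single} edge: a transverse move $t$ runs along a rung, and by the efficient moving condition a pass move $p$ is issued only when it traverses exactly one edge, which happens precisely when the block it skips over has size one, in which case the two same-side pivots are the outer vertices of a single Farey triangle and so are joined by one edge. Hence an efficient path with $m$ moves has length exactly $m$, and it suffices to prove $d_{\mathcal{L}}(x,y)=m$. Throughout I use the fan picture supplied by Proposition \ref{prop:cuttingSeqAndLadder}: a block with type-entry $a_j$ is a fan of $a_j$ Farey triangles about a single pivot (its apex), whose $a_j-1$ interior vertices along the side of the ladder are \emph{non-pivot} vertices; two distinct pivots are adjacent in $\mathcal{L}$ precisely when they are the two ends of a single rung (indices differing by one) or the two outer ends of a size-one fan (indices differing by two), and in no other way.

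Next I would reduce everything to the subgraph $P\subseteq\mathcal{L}$ spanned by the pivot vertices, whose edges are exactly the $t$-edges and $p$-edges described above. Define a retraction $\pi\colon\mathcal{L}\to P$ by collapsing each non-pivot side vertex to the apex of the fan it belongs to, and fixing every pivot. The crucial claim is that $\pi$ is $1$-Lipschitz, i.e.\ it carries each edge of $\mathcal{L}$ either to a single vertex or to an edge of $P$; this is checked by running through the finitely many edge types (an interior side edge and an interior rung collapse to the apex, while a boundary side edge or a boundary rung of a fan maps to a $t$- or $p$-edge joining the apex to a neighbouring pivot), with the tip triangles and the $(1,1)$-type configuration of Figure \ref{fig:1-1_ladder} treated separately. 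Since $x,y$ are pivots and $\pi$ is a $1$-Lipschitz retraction, $d_P(x,y)\le d_{\mathcal{L}}(x,y)$, while the reverse inequality is automatic because $P$ is a subgraph; thus $d_{\mathcal{L}}(x,y)=d_P(x,y)$. This step is exactly what rules out the a priori possibility that a competing path gains by wandering through the non-pivot vertices along a side of the ladder.

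Finally I would compute $d_P(x,y)$ as a one-dimensional stepping problem: listing the pivots in their induced linear order, a path in $P$ advances the pivot index by $1$ (a $t$-edge, always available) or by $2$ (a $p$-edge, available exactly when the skipped pivot is a size-one apex), and a shortest path may be taken monotone. Minimising the number of steps from the index of $x$ to that of $y$ therefore amounts to \emph{maximising} the number of disjoint index-$2$ skips over size-one apices, and the efficient moving condition is precisely the greedy rule for this: skip whenever the next block has size one, except take the single step that lands exactly on $y$ (this is the role of the ``final point reachable through $t$'' clause, which fixes the parity at the end). A short greedy-exchange argument then shows this rule is optimal, so the efficient path realises $d_P(x,y)=d_{\mathcal{L}}(x,y)$ and is a geodesic. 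I expect the main obstacle to be the bookkeeping in the $1$-Lipschitz verification of $\pi$ at the tips and in the $(1,1)$-type case, together with making the greedy-exchange clean across runs of consecutive size-one blocks; the geometric content, however, is carried entirely by the fan structure and the collapse $\pi$.
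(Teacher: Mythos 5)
Your proposal is correct, and it reaches the conclusion by a more systematic route than the paper does. The paper's proof is a purely local greedy comparison: at each non-terminal pivot $a$ it checks that the prescribed move ($p$ when $n=1$, $t$ when $n\ge 2$, with the final-point exception) is never longer than the alternatives ($p$ versus $tt$, $pt$ versus $tp\cdots$), and then concludes informally that ``the starting point of $\mathcal{P}$ must have the minimum length of the locus,'' i.e.\ that local optimality of each move yields a global geodesic. Two steps are left implicit there that you make explicit. First, the paper simply declares paths through non-pivot side vertices ``forbidden'' (Figure \ref{fig:effGeod_Examples}(b)) without arguing that a competitor cannot profit from them; your $1$-Lipschitz collapse $\pi$ of each fan onto its apex supplies precisely this missing justification, since together with $P\subseteq\mathcal{L}$ it gives $d_{\mathcal{L}}(x,y)=d_P(x,y)$. (Your verification of the edge types is right: interior rungs and interior side edges of a fan collapse to the apex, boundary rungs are fixed $t$-edges, the boundary side edge $\overline{v_{j-1}w_1}$ maps to the rung $\overline{v_{j-1}v_j}$, and a size-one fan's side edge is itself the $p$-edge.) Second, the paper's passage from per-step optimality to global optimality is an unstated induction; your reformulation as a monotone stepping problem on pivot indices, where the value function satisfies $f(i+2)\le f(i+1)$ and hence the greedy $2$-skip never loses, is the clean version of that induction, and your reading of the ``final point reachable through $t$'' clause as the boundary adjustment preventing an overshoot is the correct one. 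What the paper's argument buys is brevity and a direct visual case analysis; what yours buys is rigor at exactly the two points where the paper waves its hands. The only remaining work in your sketch is the bookkeeping you already flag: the Lipschitz check at the tip triangles and in the $(1,1)$-type configuration, and writing out the monotonization; none of these is a conceptual obstruction.
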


\begin{proof}

  Let $\mathcal{L}$ be a ladder and $a$ be a pivot point which is
  neither a final point nor a semi-final point of $\mathcal{L}$.
  (Such exceptional cases are obvious; the final point has no option
  to move. The semi-final point has only one option: $t$. These
  options satisfy the efficient moving condition.)  We will show that
  the efficient moving condition forces $a$ to move while minimizing
  the length of its locus. Then this eventually implies a path
  $\mathcal{P}$ in $\mathcal{L}$ satisfying the efficient moving
  condition should be a geodesic.

  Recall that $a$ has only two options to move to adjacent pivot points: pass or transverse.
  (Otherwise, it backtracks, i.e., it moves to pivot points with the lower order, which hinders the path to be a geodesic.)
  Denote by $b$, $c$ the adjacent pivot points of $a$ on the same and the other side respectively.
  Also, denote by $n$ the number of Farey triangles in $\triangle abc$.
  See Figure \ref{fig:effGeod_ladder}. We break into two cases:

\begin{itemize}
\item $\mathbf{n=1}$. If $b$ is the endpoint, then $p$ is shorter than
  $tt$. Thus, $a$ must choose to move $p$.  If $b$ is not the
  endpoint, let $d$ be the adjacent pivot point of $b$ on the other
  side.  Then $a$ must visit $b$ or $d$. If $a$ visits $b$, then $p$
  is the shortest option for $a$ to choose. If $a$ visits $d$, then
  $pt$ is always the shortest option for $a$ to choose, no matter how
  many Farey triangles are in $\triangle bcd$. In any case, $a$ can
  choose $p$ to minimize the length of its locus, which aligns with
  the efficient moving condition.
\item $\mathbf{n\ge 2}$. Note that $a$ must visit $b$ or $c$.  If $a$
  visits $b$, then there are two possible options: $tt$ or $p$.
  However, since now $p$ leaves at least length 2 trail, $tt$ is
  always the shortest path for $a$ to travel toward $b$. If $a$ visits
  $c$, then obviously $t$ is the shortest choice.  In any case, $a$
  can choose $t$ to minimize the length of its locus, which aligns
  with the efficient moving condition.
\end{itemize}
All in all, it follows that if $\mathcal{P}$ satisfies the efficient
moving condition, then the starting point of $\mathcal{P}$ must have
the minimum length of the locus by the above arguments.  Hence
$\mathcal{P}$ should be a geodesic in $\mathcal{L}$.
\end{proof}

\begin{RMK}
  \begin{enumerate}
  \item If the semi-final pivot point $z$ is contained in
    $\mathcal{P}$, the move $t$ is the only option for $z$. That is
    why the efficient moving condition contains the final point
    exceptions.
    
  \item As in the proof, the alternative path by allowing move $p$ in
    2-block does not generate a longer path, so it can be another
    geodesic. Thus we want to call a geodesic \textbf{efficient} if
    satisfies the efficient moving condition.
\end{enumerate}
\end{RMK}

For bi-infinite geodesics, the efficient moving condition is valid as well,
since the efficient moving condition is in some sense a \textit{local} condition.

\begin{PROP}
  If a bi-infinite path in a bi-infinite ladder satisfies the efficient moving condition, then it is a bi-infinite geodesic.
\end{PROP}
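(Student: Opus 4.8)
The plan is to bootstrap from the finite case, Proposition~\ref{prop:EffmovingIsGeod}, using the definition of a bi-infinite geodesic in a bi-infinite ladder. Let $\mathcal{P}$ be the given bi-infinite path in the bi-infinite ladder $\mathcal{L}$, and fix two arbitrary vertices $s,t$ of $\mathcal{P}$. By that definition together with the remark immediately following it, $\mathcal{P}$ is a bi-infinite geodesic as soon as the finite subpath $\mathcal{P}(s,t)$ is a geodesic in the subladder $\mathcal{L}(s,t)$ for every such pair; here $\mathcal{L}(s,t)\subseteq\mathcal{L}$ by Lemma~\ref{lem:SubladderLemma}, so $\mathcal{P}(s,t)$ really is a path in $\mathcal{L}(s,t)$. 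I would therefore try to verify that $\mathcal{P}(s,t)$ satisfies the efficient moving condition \emph{inside the finite ladder} $\mathcal{L}(s,t)$ and then quote Proposition~\ref{prop:EffmovingIsGeod}.

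The point that makes this work is that the efficient moving condition is local away from the endpoints: at a non-terminal pivot the choice between a transverse and a pass is dictated solely by the size of the one block lying immediately ahead. Passing from $\mathcal{L}$ to $\mathcal{L}(s,t)$ deletes only the fans of Farey triangles apexed at $s$ and at $t$, while leaving every Farey triangle strictly between $s$ and $t$—hence every block encountered in the interior of $\mathcal{P}(s,t)$—untouched. Consequently, at each pivot of $\mathcal{P}(s,t)$ other than the ones adjacent to $t$, the move prescribed for $\mathcal{P}$ by the efficient moving condition in $\mathcal{L}$ (which has no final-point clause to invoke) is exactly the move prescribed for $\mathcal{P}(s,t)$ in $\mathcal{L}(s,t)$, so the two conditions agree there.

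The main obstacle, and the only genuine content, is the terminal behavior, where the final-point exceptions of the finite condition have no analogue in $\mathcal{L}$. I would handle it by a dichotomy that crucially uses the hypothesis that $t$ is a vertex of $\mathcal{P}$. Writing $z$ for the pivot of $\mathcal{L}(s,t)$ immediately preceding $t$ (its semi-final point), either $\mathcal{P}$ visits $z$ or it does not. If $\mathcal{P}$ visits $z$, then because $t$ also lies on $\mathcal{P}$ the move of $\mathcal{P}$ at $z$ cannot be a pass (a pass would skip $t$), so it is the transverse move $z\to t$; this is exactly what the final-point clause forces at the semi-final pivot, and in $\mathcal{L}(s,t)$ it is in any case the only available forward move. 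If instead $\mathcal{P}$ omits $z$, then $\mathcal{P}$ reaches $t$ by a pass from the pivot two steps before $t$, which the efficient moving condition in $\mathcal{L}$ permits only when the block bridged by that pass is a single Farey triangle; since from that pivot the transverse move lands on $z\neq t$, the finite condition's pass clause (single edge, and the final point not reachable by a transverse) is satisfied as well. In both cases $\mathcal{P}(s,t)$ obeys the finite efficient moving condition, and Proposition~\ref{prop:EffmovingIsGeod} then yields that $\mathcal{P}(s,t)$ is a geodesic in $\mathcal{L}(s,t)$, completing the argument. The delicate step to get right is precisely this endpoint bookkeeping—checking that deleting the fan apexed at $t$ forces the finite final-point clauses and never contradicts the moves $\mathcal{P}$ already makes.
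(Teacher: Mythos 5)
Your proposal is correct and follows essentially the same route as the paper: reduce to the finite restrictions $\mathcal{P}(s,t)$ in the subladders $\mathcal{L}(s,t)$ and invoke Proposition~\ref{prop:EffmovingIsGeod} together with the definition of a bi-infinite geodesic. The paper's own proof is a two-line version of this; you merely make explicit the endpoint bookkeeping (the final-point clauses at $s$ and $t$) that the paper leaves implicit.
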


\begin{proof}
  By Proposition \ref{prop:EffmovingIsGeod}, every restricted finite
  subpath of a given path is a geodesic in the finite subladder. Hence by
  definition, it is a bi-infinite geodesic in the whole bi-infinite ladder.
\end{proof}

Now we narrow down our interest to bi-infinite geodesics in \textit{periodic} ladders.

\begin{DEF}
  Let $\mathcal{L}$ be an infinite(not necessarily bi-infinite)
  periodic ladder with a repeating pattern $(a_1,\cdots,a_n)$. If the
  pattern $(a_1,\cdots,a_n)$ is \textit{minimal} upto cyclic
  permutation, (i.e., it cannot be decomposed into copies of
  subpatterns upto cyclic permutation) then we say $\mathcal{L}$ is of
  \textbf{period} $(a_1,\cdots,a_n)$. In this case, a finite subladder of $\mathcal{L}$
  having the following type upto cyclic permutation:
\[
  \begin{cases}
    (a_1,\cdots,a_n) & \text{if $n$ is even,}\\
    (a_1,\cdots,a_n,a_1,\cdots,a_n) & \text{if $n$ is odd,}
  \end{cases}
\]
is called a \textbf{prime subladder} of $\mathcal{L}$.
\end{DEF}

\begin{RMK}
Observe that the length of any prime subladder is always \textit{even}.  
\end{RMK}
The following proposition says within a bi-infinite ladder $\mathcal{L}$,  we can establish bi-infinite efficient geodesic whenever $\mathcal{L}$ is \textit{periodic}, by concatenating
finite efficient geodesics in its prime subladder.
\begin{PROP}[Concatenating Geodesics]
  \label{prop:ConcatGeodesic}
  Let $\mathcal{L}$ be a periodic bi-infinite ladder with a prime
  subladder $\mathcal{L}'$.  Then there exists an efficient geodesic
  $\mathcal{P}'$ whose endpoints reside on the same side of
  $\mathcal{L}'$ such that by concatenating $\mathcal{L}'$
  bi-infinitely, the induced concatenated path $\mathcal{P}$ is a
  bi-infinite efficient geodesic $\mathcal{P}$ in $\mathcal{L}$.  More
  generally, if $\mathcal{L}''$ is a finite concatenation of prime
  subladders then there exists an efficient geodesic $\mathcal{P}''$ in
  $\mathcal{L}''$ whose induced bi-infinitely concatenated path
  $\mathcal{P}$ is a bi-infinite efficient geodesic in $\mathcal{L}$.
\end{PROP}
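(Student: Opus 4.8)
The plan is to produce the desired bi-infinite path by extending the efficient moving condition across the whole periodic ladder and then reading off one period as $\mathcal{P}'$. Concretely, I would first fix the orientation of $\mathcal{L}$ and enumerate its pivot points as in Figure \ref{fig:pivotOrder_ladder}, so that consecutive pivots alternate sides and the block data (the number $n$ attached to a pivot in Figure \ref{fig:effGeod_ladder}) is periodic with the prime subladder. Since each transverse move $t$ changes the side while each pass move $p$ preserves it, the side of a pivot is exactly the parity of its index. The remark that every prime subladder has \emph{even} length is precisely what guarantees that the two boundary pivots of a single copy of $\mathcal{L}'$ lie on the same side: an even number of fans alternating sides returns the side pattern to its starting value after one period. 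This supplies the same-side endpoints demanded by the statement.

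With the endpoints fixed, I would define $\mathcal{P}'$ by running the efficient moving condition inside $\mathcal{L}'$ between these two same-side pivots; by Proposition \ref{prop:EffmovingIsGeod} this is a geodesic in $\mathcal{L}'$. The next step is to concatenate $\mathcal{L}'$ bi-infinitely and check that the resulting path $\mathcal{P}$ still obeys the efficient moving condition at \emph{every} pivot. Away from the gluing points this is immediate, because the efficient moving condition is local---the choice of $t$ versus $p$ at a pivot depends only on whether the adjacent block is a $1$-block or a ${\geq}2$-block---and this local data is unchanged by the bi-infinite repetition. Once $\mathcal{P}$ is known to satisfy the efficient moving condition everywhere, the bi-infinite counterpart of Proposition \ref{prop:EffmovingIsGeod} established above immediately yields that $\mathcal{P}$ is a bi-infinite efficient geodesic in $\mathcal{L}$.

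The main obstacle is the behavior at the gluing points, and it comes entirely from the \emph{final-point exceptions} built into the efficient moving condition. Inside the finite subladder $\mathcal{L}'$ the condition forces a transverse move at the semi-final pivot in order to reach the final endpoint, even on a $1$-block; but in the bi-infinite ladder there is no final point, so the purely local rule applies there instead. To reconcile the two, I would use the freedom to choose the period boundary. Because $\mathcal{L}'$ is defined only up to cyclic permutation and has even length, I may cyclically rotate the cut so that the boundary sits at a transverse move between a ${\geq}2$-block (the semi-final pivot) and the endpoint, while keeping the two endpoints on the same side. At such a cut the finite rule ($t$ forced to reach the final point) and the local rule ($t$ because the block is ${\geq}2$) agree, so $\mathcal{P}'$ genuinely satisfies the efficient moving condition and the bi-infinite concatenation reproduces exactly the locally defined path $\mathcal{P}$. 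The one degenerate case is the all-$1$-block type (the $(1,1)$-type and its analogues), where the efficient path makes only pass moves and stays on a single side; there the same-side condition is automatic and no transverse junction is needed, so this case can be checked directly.

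Finally, the more general assertion follows with no new ideas: a finite concatenation of prime subladders again has even length, being a sum of even lengths, so it serves as a valid fundamental domain, and running the identical argument on this larger period produces $\mathcal{P}''$ and its bi-infinite concatenation. I expect the verification that such a transverse cut always exists, outside the all-pass degenerate case, and that the concatenation introduces no backtrack at the seam, to be the only genuinely delicate points; everything else is a direct appeal to the locality of the efficient moving condition and to Proposition \ref{prop:EffmovingIsGeod}.
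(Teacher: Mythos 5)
Your overall strategy is genuinely different from the paper's: you construct the locally efficient bi-infinite path first and then try to place the period boundary compatibly, whereas the paper fixes the fundamental domain $\mathcal{L}'$, considers the \emph{two} maximal efficient geodesics in it whose endpoints lie on the same side (one for each side), and uses Lemma \ref{lem:twoGeodIntersect} to show that at least one of them has no problematic reluctant move at the seam. You do correctly isolate the real difficulty --- the final-point exception can force a transverse move at the semi-final pivot of the finite ladder that is not locally efficient in the bi-infinite ladder --- and your use of the even length of a prime subladder to get same-side endpoints matches the paper.

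However, your resolution rests on a false dichotomy. You claim that unless every block is a $1$-block, the cut can be rotated to sit at a transverse move onto a ${\geq}2$-block. This fails, for example, for a ladder of period $(1,2)$: since the move chosen at a pivot is governed by the size of the \emph{next} fan, and every other fan is a $1$-block, the unique bi-infinite locally efficient path visits only the $2$-block pivots and consists entirely of pass moves, so it contains no transverse move anywhere --- yet the type is not all $1$'s, so neither of your two cases applies. (The proposition still holds there, because a seam reached by a pass move is automatically consistent with the finite rule, but your proof has no case covering it.) Two further points are left unverified: that the visited set of the bi-infinite efficient path is periodic with period exactly one prime subladder, so that consecutive seams are hit consistently on one side --- this needs the forward-determinism and merging argument that the paper packages as Lemma \ref{lem:twoGeodIntersect}; and that rotating the cut only proves the statement for \emph{some} choice of prime subladder rather than the given $\mathcal{L}'$, which is harmless for Theorem \ref{thm:geodAxis} but strictly weaker than the proposition as stated.
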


\begin{proof}
  Since we want to concatenate $\mathcal{P}'$ by juxtaposing $\mathcal{L}'$
  to make one single connected geodesic in $\mathcal{L}$,
  we need to consider \textit{maximal} efficient geodesics in $\mathcal{L}'$
  whose endpoints lying on the same side of $\mathcal{L}'$. Thus, there are
  always two efficient geodesics to consider in $\mathcal{L}'$,
  each starts and ends on the same side of $\mathcal{L}'$.

  Note the efficiency of a geodesic is determined by the coefficients
  of the ladder, except for the move from the semi-final point to the
  final point of the ladder, which we call \textbf{reluctant
    move}. This move is abnormal, as it is always determined to be $t$
  regardless of the succeeding coefficient of the ladder. In
  particular, even if the associated coefficient is 1, $\mathcal{L}'$
  fails to move $p$, which hinders $\mathcal{P}'$ to remain efficient
  when we embed $\mathcal{P}'$ as a subpath of $\mathcal{P}$.  See
  Figure \ref{fig:reluctant_move}.
  \begin{figure}[ht]
    \centering
    \includegraphics[width=\textwidth]{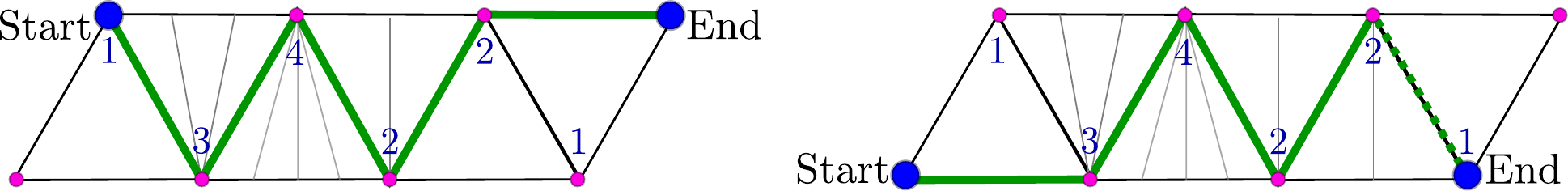}
    \caption{Illustration of Reluctant move. Within a ladder, there
      are two maximal efficient geodesics available whose endpoints lying on the same
      side. In this case, only the right one consists of
    the reluctant move, which is denoted by the dotted line.}
    \label{fig:reluctant_move}
  \end{figure}
  
  Therefore, the only possible obstruction for $\mathcal{P}$ to be
  efficient in $\mathcal{L}$ arises from the reluctant moves in
  $\mathcal{P}'$. Thus, now assume that two efficient geodesics in
  $\mathcal{L}'$ consist of reluctant moves, and suppose further to
  the contrary that these two moves indeed cause \textit{problems}
  during the bi-concatenation. By problems we mean the reluctant move
  cannot be extended as an efficient move after the
  bi-concatenation. In fact, the reluctant moves in two efficient
  geodesics should be as illustrated in Figure
  \ref{fig:reluctant_problematic}. To make each reluctant move
  problematic, \textit{both the first and last coefficients of
    $\mathcal{L}'$ must be 1.}

  \begin{figure}[ht]
    \centering
    \includegraphics[width=\textwidth]{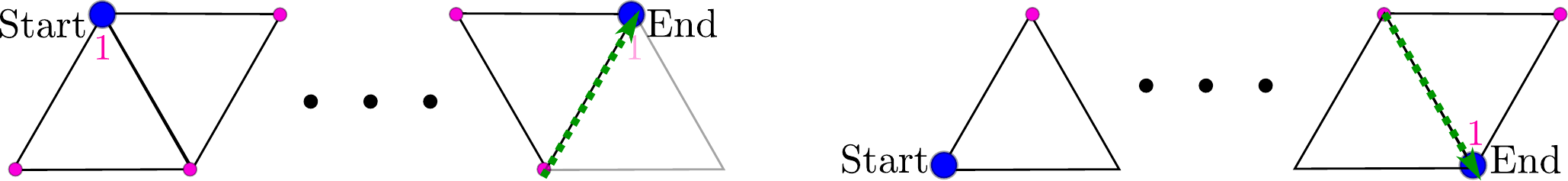}
    \caption{Both efficient geodesics have problematic reluctant moves(dotted lines). This forces
    the first and last coefficient of the ladder to be 1.}
    \label{fig:reluctant_problematic}
  \end{figure}
  To deduce a contradiction from this, we need the following lemma:

  \begin{LEM}
    \label{lem:twoGeodIntersect}
    Unless all the coefficients of a ladder are 1, the two efficient geodesics
    intersect at some point $x$ in the ladder, and follow the same
    pivot points starting from $x$, except for the final point.
  \end{LEM}
  \begin{proof}[Proof of Lemma \ref{lem:twoGeodIntersect}]
    \renewcommand{\qedsymbol}{$\triangle$} Suppose the two geodesics
    do not intersect. Then both should not contain $t$-moves because
    $t$ bisects the ladder so it makes the two geodesics inevitably
    meet each other. Therefore, both geodesics are of the form
    $p \cdots p$, in turn, all the coefficients of the ladder become
    1. Thus, we just showed the contraposition of the first
    assertion. Furthermore, note each efficient move at pivot points
    possibly except for the final point is completely determined by
    the corresponding coefficients of the ladder. Hence, once the two
    geodesics meet, they have to visit the exact same pivot points
    in $\mathcal{L}'$, possibly except for the final point.
  \end{proof}

  Now we are ready to prove our Proposition \ref{prop:ConcatGeodesic}.
  We claim that two geodesics in $\mathcal{L}'$ should not intersect.
  Suppose to the contrary that they intersect at some point in the ladder.
  Then two geodesics eventually visit the same pivot points. However,
  as in Figure \ref{fig:reluctant_problematic}, this forces one of two geodesics
  to move $tt$ at the final block of $\mathcal{L}'$, whose coefficient is 1, which contradicts
  to the efficiency condition.(It should have moved $p$ at the final block, instead of $tt$.)
  
  Thus, two geodesics must not meet each other; so their coefficient
  must be all 1's, by Lemma \ref{lem:twoGeodIntersect}. However, these
  consecutive $p$-moves forces both geodesics to avoid the semi-final
  points of the ladder, which contradicts to the assumption that both
  geodesics consist of reluctant moves.

  Therefore, we showed even if both of the two geodesics contain reluctant moves,
  at least one of them must not cause a problem during the bi-concatenation.
  All in all, we conclude at least one of two geodesics can be extended to
  $\mathcal{P}$ within $\mathcal{P}$.
\end{proof}

\section{$\PSL(2,\mathbb{Z})$-action on Farey Graph}
\label{sec:PSLActionOnFareyGraph}
Recall that $\PSL(2,\mathbb{Z})$ is isomorphic to the group of the
orientation preserving isometries of $\hp$. Indeed, there is an
isometric action $\PSL(2,\mathbb{Z})$ on $\mathcal{F} \subset \hp$, defined as
\[
  \begin{bmatrix}
    a & b \\ c & d
  \end{bmatrix}
  \cdot \frac{p}{q} = \frac{ap + bq}{cp + dq}
\]
on the vertices of $\mathcal{F}$, and induced action on the edges.
(See for instance \cite[Prop 3.1]{hatcher2017topology}) It follows
that any element $f$ of $\PSL(2,\mathbb{Z})$ maps a Farey triangle to
another Farey triangle. More generally, $f$ sends a ladder to the
ladder of the same type. This is because a ladder is a consecutive
chain of Farey triangles(Fact \ref{fact:chrLadder}), and a bi-infinite
geodesic $\gamma$ cuts a Farey triangle $T$ with the same symbol($L$
or $R$) as when $f(\gamma)$ cuts $f(T)$. (See the proof of \cite[Prop
2.2]{series2015continued}) Eventually, for a ladder $\mathcal{L}$ and
its associated geodesic $\gamma$, the pair $(\mathcal{L},\gamma)$ yields the
same cutting sequence as $(f(\mathcal{L}),f(\gamma))$ does. Thus, we
proved the following proposition:
\begin{PROP}
  \label{prop:preservesLadder}
  $\PSL(2,\mathbb{Z})$ preserves the type of a ladder.
\end{PROP}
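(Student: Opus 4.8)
The plan is to reduce the statement to the invariance of cutting sequences under the action, exploiting the dictionary between ladder type and cutting sequence established in Proposition \ref{prop:cuttingSeqAndLadder}. Fix $f \in \PSL(2,\mathbb{Z})$ and a ladder $\mathcal{L}$ with associated oriented bi-infinite geodesic $g$. Since $f$ acts on $\hp$ as an orientation-preserving isometry that carries the Farey tessellation to itself---mapping vertices to vertices and Farey edges to Farey edges, hence Farey triangles to Farey triangles---it sends $g$ to an oriented geodesic $f(g)$ (carrying along the pushed-forward orientation) and, being a homeomorphism, sends the collection of Farey triangles whose interior meets $g$ bijectively onto the collection of those meeting $f(g)$. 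In other words, $f(\mathcal{L})$ is precisely the ladder $\mathcal{L}(f(g))$ associated with $f(g)$, and by Fact \ref{fact:chrLadder} it is again a consecutive chain of Farey triangles.

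By Proposition \ref{prop:cuttingSeqAndLadder}, the type of $\mathcal{L}$ is read off as the exponents of the cutting sequence of $g$, and likewise the type of $f(\mathcal{L}) = \mathcal{L}(f(g))$ is read off from the cutting sequence of $f(g)$. Hence it suffices to prove that $g$ and $f(g)$ have the same cutting sequence. Recall that this sequence records, triangle by triangle as one travels along the oriented geodesic, whether the pivot vertex (where the two cut edges of a Farey triangle meet) lies to the \emph{left} or to the \emph{right}; so the problem becomes showing that $f$ preserves these $L$/$R$ labels.

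This label invariance is the crux, and where orientation-preservation is essential. For each Farey triangle $T$ cut by $g$, the image $f(T)$ is cut by $f(g)$, and because $f$ is an orientation-preserving isometry the notion of ``lying to the left of the oriented geodesic'' is preserved; thus the pivot of $f(T)$ relative to $f(g)$ receives the same symbol as the pivot of $T$ relative to $g$. This is exactly the content of \cite[Prop 2.2]{series2015continued}. Consequently $g$ and $f(g)$ produce identical sequences of $L$'s and $R$'s, their exponents agree, and the types of $\mathcal{L}$ and $f(\mathcal{L})$ coincide. The one point demanding genuine care---and the main obstacle---is precisely this invariance of the left/right labeling: an orientation-reversing isometry would interchange $L$ and $R$ and hence reverse the type, so the argument must explicitly invoke that $\PSL(2,\mathbb{Z})$ consists of orientation-preserving isometries of $\hp$.
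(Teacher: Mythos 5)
Your proof is correct and follows essentially the same route as the paper's: both reduce the claim to the invariance of the cutting sequence under the action, using that $f$ permutes Farey triangles and, being orientation-preserving, preserves the $L$/$R$ labels (via \cite[Prop 2.2]{series2015continued}), and then read off the type from the exponents as in Proposition \ref{prop:cuttingSeqAndLadder}. Your version is slightly more explicit about why orientation-preservation is the crucial hypothesis, but the substance is identical.
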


  

For a hyperbolic isometry $f$ in $\PSL(2,\mathbb{Z})$, denote by $A_f$ the axis of $f$ in $\hp$.
Recall that $A_f$ is defined to be the bi-infinite geodesic joining the two fixed points of $f$,
which are represented by conjugate quadratic irrational numbers.
It is also a basic fact that $f$ acts on $A_f$ by translation.

Now we can show that there exists a \textit{canonical} ladder
associated with a given hyperbolic element in $\PSL(2,\mathbb{Z})$.
\begin{PROP}
  \label{prop:BiInfiniteLadder}
  For each hyperbolic element $f$, there exists the unique bi-infinite ladder $\mathcal{L}$
  stabilized by $f$. In this case, each side of $\mathcal{L}$ is stabilized by $f$ as well.
\end{PROP}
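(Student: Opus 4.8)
The plan is to take the canonical candidate to be the ladder of the axis. Write $\mathcal{L} := \mathcal{L}(A_f)$, the ladder associated with the invariant geodesic $A_f$ of $f$. Since the endpoints of $A_f$ are conjugate quadratic irrationals, they are not vertices of $\mathcal{F}$, so $A_f$ meets the interiors of infinitely many Farey triangles in each direction and $\mathcal{L}$ is genuinely bi-infinite. Existence of an invariant ladder is then immediate: $f$ translates along $A_f$, so $f(A_f)=A_f$, and by the discussion preceding Proposition \ref{prop:preservesLadder} the isometry $f$ carries the ladder of any geodesic $\gamma$ to the ladder of $f(\gamma)$; hence $f(\mathcal{L})=\mathcal{L}(f(A_f))=\mathcal{L}$.

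For uniqueness, let $\mathcal{L}'$ be an arbitrary $f$-invariant bi-infinite ladder, and by Fact \ref{fact:chrLadder} view it as a consecutive chain of Farey triangles $\{\Delta_i\}_{i\in\mathbb{Z}}$. I would first argue that such a chain determines two canonical ideal endpoints $x_-,x_+\in\partial\hp$: the successive rungs $e_i=\Delta_i\cap\Delta_{i+1}$ cut off a nested sequence of half-planes, and a nested sequence of distinct Farey edges shrinks to a single boundary point, yielding $x_+$ as $i\to+\infty$ and $x_-$ as $i\to-\infty$; consequently $\mathcal{L}'=\mathcal{L}(x_-,x_+)$. This identification of the endpoints as intrinsic data of the ladder is the step I expect to require the most care, since I must check that the nested Farey intervals genuinely collapse (this is the usual convergence of the Farey/continued-fraction subdivision) and that the resulting endpoints are irrational, so that $\mathcal{L}'$ really is bi-infinite.

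Granting this, $f$ permutes the triangles $\{\Delta_i\}$ while preserving adjacency, so it acts on the chain as an automorphism of a bi-infinite path --- that is, as a shift or as a flip. A flip fixes a central triangle or a central rung: in the former case $f$ would permute the three ideal vertices of a triangle nontrivially (an elliptic of order $3$), and in the latter case it would exchange the two endpoints of an edge (an elliptic of order $2$); both contradict the hyperbolicity of $f$. Hence $f$ acts as a shift and fixes each of $x_-,x_+$. But a hyperbolic element of $\PSL(2,\mathbb{Z})$ has exactly two fixed points on $\partial\hp$, namely the endpoints of $A_f$; therefore $\{x_-,x_+\}$ is exactly that pair, the geodesic $\overline{x_-x_+}$ equals $A_f$, and $\mathcal{L}'=\mathcal{L}(x_-,x_+)=\mathcal{L}$.

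Finally, for the claim about sides, observe that the shift conclusion already shows $f$ does not interchange the two ends of $\mathcal{L}$, hence preserves the left and the right of the oriented axis $A_f$ along which it translates. Since $f$ preserves the type and the cutting sequence of the ladder (Proposition \ref{prop:preservesLadder}), it sends rungs to rungs and the spine to the spine, and therefore sends each non-rung boundary edge to a non-rung boundary edge on the same side. Thus each of the two sides of $\mathcal{L}$ is mapped onto itself, as claimed.
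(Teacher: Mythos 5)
Your proof is correct, and for existence and bi-infiniteness it runs along the same lines as the paper's: take $\mathcal{L}=\mathcal{L}(A_f)$, note the fixed points are irrational so the ladder cannot be finite, and use the fact that $f$ carries the ladder of $\gamma$ to the ladder of $f(\gamma)$. Where you genuinely diverge is uniqueness. The paper only says ``such a ladder is unique since $A_f$ is uniquely determined by $f$,'' which establishes that the \emph{construction} is well-defined but not that no \emph{other} bi-infinite ladder is stabilized by $f$; your argument proves this stronger statement, which is what the proposition literally asserts. You recover the two ideal endpoints of an arbitrary invariant ladder intrinsically from its nested rungs, observe that $f$ acts on the chain of triangles through the infinite dihedral group, rule out the reflections because fixing a central triangle or swapping the two triangles across a central rung would force $f$ to be elliptic (or trivial), and conclude that $f$ fixes both endpoints, which must therefore be the fixed points of $f$ and hence the ends of $A_f$. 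The one step you flagged --- nested distinct Farey edges collapsing to a single, irrational boundary point --- is standard: the interval cut off by a rung $\overline{p/q,r/s}$ has length $1/(qs)$, and these lengths tend to $0$ because each block of the ladder's type is a finite positive integer, so the pivot vertex changes infinitely often and the denominators grow. For the claim about sides, your argument is a global version of the paper's local one: you note that a shift preserves the two ends of the ladder, hence the left and right half-planes of the oriented axis, hence each side; the paper instead shows that a single pivot point cannot jump sides because the image of a Farey triangle would then have reversed orientation. Both are orientation arguments and both are valid; yours packages the side-preservation and the uniqueness into one structural statement about how $f$ acts on the chain.
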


\begin{proof}
  Let $\mathcal{L}$ be the ladder associated with the axis $A_f$. Such a
  ladder is unique since $A_f$ is uniquely determined by $f$. Suppose
  for the sake of contradiction, $A_f$ intersects only finitely many
  Farey triangles. This is equivalent to saying $\mathcal{L}$ consists
  of finitely many Farey triangles. Then the ends of $A_f$ must be
  vertices of $\mathcal{F}$.  However, as two fixed points of $f$ on
  $A_f$ are irrational, the ends of $A_f$ cannot be vertices of
  $\mathcal{F}$, which is a contradiction. Hence, $\mathcal{L}$ must be
  bi-infinite.

  By definition of a ladder, $\mathcal{L}$ covers $A_f$.  As $f$
  bijectively sends a segment of $A_f$ to another segment of $A_f$,
  any subladder of $\mathcal{L}$ must be bijectively mapped to another
  subladder of $\mathcal{L}$.  Thus, it follows that $f$ stabilizes
  $\mathcal{L}$.

  \begin{figure}[h]\centering    
  \includegraphics[width = .7\linewidth]{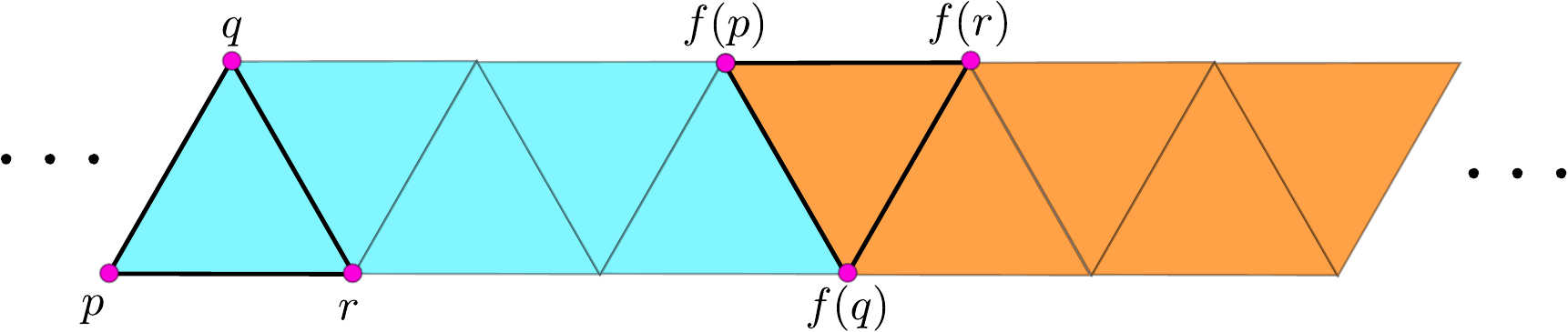}
  \caption[Odd length ladder]{$\triangle pqr$,
    $\triangle f(p)f(q)f(r)$: Two triangles with different orientations}
  \label{fig:odd_ladder}
\end{figure}

  To show $f$ stabilizes each side of $\mathcal{L}$, it suffices to
  show $f$ sends a pivot point of $\mathcal{L}$ to another pivot point
  on the same side of $\mathcal{L}$. Assume there is some pivot point
  $p$ in $\mathcal{L}$ such that $f(p)$ lies on the other side of
  $\mathcal{L}$, as in Figure \ref{fig:odd_ladder}.  Denote by $q,r$
  the adjacent pivot points of $p$ on the other side and the same side
  respectively. Since $p,q,r$ form a triangle, so do $f(p),f(q),f(r)$.
  Then as $f$ acts on $\mathcal{L}'$ by translation, the position of
  $f(p),f(q)$ and $f(r)$ must be as in Figure \ref{fig:odd_ladder}.
  However, then $\triangle pqr$ and $\triangle f(p)f(q)f(r)$ have
  different orientations, which is impossible since $f$ is orientation
  preserving. Therefore, $f$ must stabilize each side of
  $\mathcal{L}$.
\end{proof}

Now we can show that the canonical ladder associated with given
hyperbolic element must be \textit{periodic}.
\begin{THM}
  \label{thm:periodicBiInfiniteLadder}
  Let $f$ be a hyperbolic element in $\PSL(2,\mathbb{Z})$, and
  $\mathcal{L}$ be the bi-infinite ladder stabilized by $f$.  Then
  $\mathcal{L}$ is periodic with the period identical to the cutting
  sequence of a fixed point of $f$. More precisely, there exists a
  ladder $\mathcal{L}'$ constructed by a finite concatenation of a
  prime subladder of $\mathcal{L}$ such that
  \[
    \infcup f^k(\mathcal{L}') = \mathcal{L}.
  \]
\end{THM}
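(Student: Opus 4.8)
The plan is to combine the identification of a ladder's type with a cutting sequence (Proposition \ref{prop:cuttingSeqAndLadder}) with the periodicity of continued fractions of quadratic irrationals, and then to upgrade \emph{eventual} periodicity to genuine bi-infinite periodicity using the translation action of $f$.

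First I would recall that $\mathcal{L}$ is the ladder associated with the axis $A_f$, whose two endpoints are the fixed points of $f$. By Proposition \ref{prop:cuttingSeqAndLadder}, the bi-infinite type sequence of $\mathcal{L}$ coincides with the exponents of the cutting sequence of $A_f$. Since each fixed point of $f$ is a quadratic irrational, its continued fraction is eventually periodic by Lagrange's theorem (cited in the Remark), so by Proposition \ref{prop:CuttingAndPCF} and Corollary \ref{cor:CuttingAndNCF} the cutting sequence of a ray toward that fixed point is eventually periodic; by Lemma \ref{lem:cuttingSeqCoincides} the same eventual periodicity holds for $A_f$ at that end. This already pins down the candidate period as the continued-fraction period of the fixed point.

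To convert eventual periodicity into exact bi-infinite periodicity, I would exploit the action of $f$. By Proposition \ref{prop:BiInfiniteLadder}, $f$ stabilizes $\mathcal{L}$, acts on it by translation along $A_f$, and stabilizes each side. Enumerating the pivot points of $\mathcal{L}$ in the induced order, $f$ must send the $i$-th pivot point to the $(i+s)$-th pivot point for a fixed positive integer $s$, the translation length measured in pivot points. Since $f$ preserves the type of a ladder (Proposition \ref{prop:preservesLadder}), the type sequence read from index $i$ equals that read from index $i+s$; hence the bi-infinite type sequence is periodic with minimal period $n$ dividing $s$, and this minimal period is exactly the continued-fraction period found above. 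Moreover, because consecutive pivot points alternate sides while $f$ preserves each side, the shift $s$ must be \emph{even}.

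Finally I would take $\mathcal{L}'$ to be the subladder spanned by $s$ consecutive pivot points, a fundamental domain for the $f$-action. Because $n \mid s$ and $s$ is even, $\mathcal{L}'$ decomposes as a finite concatenation of prime subladders: when $n$ is even it is $s/n$ copies of the $(a_1,\ldots,a_n)$-block, and when $n$ is odd the quotient $s/n$ must itself be even, so $\mathcal{L}'$ is $s/(2n)$ copies of the doubled block $(a_1,\ldots,a_n,a_1,\ldots,a_n)$, matching the definition of a prime subladder. Since $f$ shifts by exactly the $s$ pivot points spanned by $\mathcal{L}'$, the translates $f^k(\mathcal{L}')$ tile $\mathcal{L}$, giving $\infcup f^k(\mathcal{L}') = \mathcal{L}$. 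I expect the main obstacle to be the bookkeeping of this last paragraph: verifying that $f$ genuinely acts as a single orientation-respecting integer shift on the pivot points (with no fixed or reversed pivot), and tracking the parity and side constraints precisely enough to guarantee that the fundamental domain closes up into \emph{whole} prime subladders rather than severing a period.
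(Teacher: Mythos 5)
Your proposal is correct and follows essentially the same route as the paper: eventual periodicity of the cutting sequence from Lagrange's theorem via Propositions \ref{prop:CuttingAndPCF}, \ref{prop:cuttingSeqAndLadder}, Corollary \ref{cor:CuttingAndNCF} and Lemma \ref{lem:cuttingSeqCoincides}, upgraded to genuine bi-infinite periodicity by the translation action of $f$, with the side-preservation statement of Proposition \ref{prop:BiInfiniteLadder} forcing the pivot shift to be even so that the fundamental domain closes up into whole prime subladders. The only difference is presentational: you run the divisibility-and-parity bookkeeping directly, whereas the paper phrases the same step as a proof by contradiction.
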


\begin{proof}
  Let $\alpha$ be a quadratic irrational fixed point of $f$.  By
  Proposition \ref{prop:CuttingAndPCF} and Corollary
  \ref{cor:CuttingAndNCF}, $\alpha$ has a periodic cutting sequence.
  Orient $A_f$ to make $\alpha$ be the terminal point of $A_f$.  By
  Lemma \ref{lem:cuttingSeqCoincides}, the cutting sequence of $A_f$
  must be eventually periodic. By Proposition
  \ref{prop:cuttingSeqAndLadder}, this induces the one-sided infinite
  periodic subladder in $ \mathcal{L}$ corresponding to the periodic part of
  the cutting sequence of $A_f$.

\begin{figure}[h]\centering    
  \includegraphics[width = .7\linewidth]{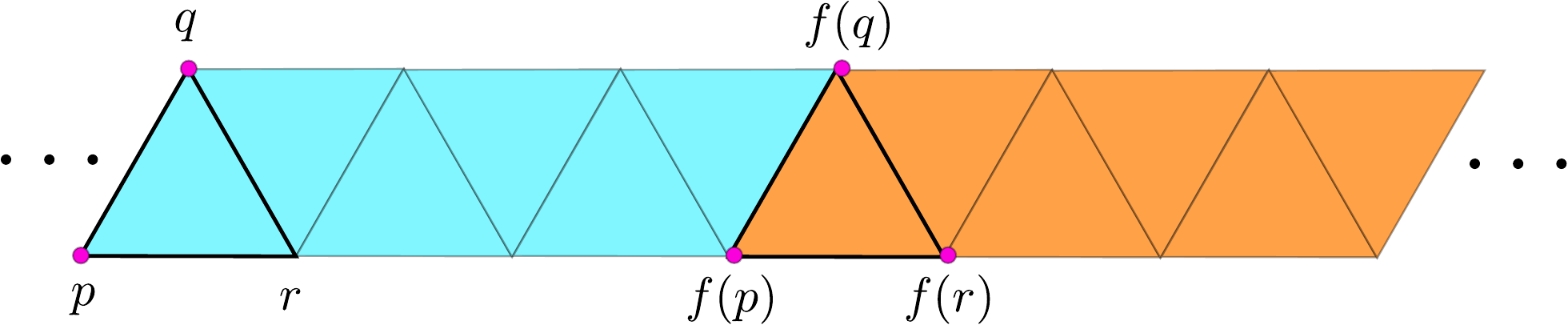}
  \caption[Even length ladder]{Periodic part of bi-infinite ladder $\mathcal{L}$.}
  \label{fig:even_ladder}
\end{figure}
  
Let $\mathcal{L}''$ be its prime subladder, with orientation inherited
from $A_f$, and $p,q$ be its first two pivot points.
As $f(p),f(q)$ are also pivot points of $\mathcal{L}$, Now
define $\mathcal{L}'$ to be the finite subladder of $\mathcal{L}$
bounded by two edges $\overline{pq}$ and $\overline{f(p)f(q)}$. Since
$f$ acts on $\mathcal{L}$ by translation, we have
  \[
    \infcup f^k(\mathcal{L}') = \mathcal{L},
  \]
  and thus $\mathcal{L}$ is periodic, with the period identical to the
  period of $\mathcal{L}'$, which is same as the cutting sequence of
  $\alpha$.

  Now, suppose $\mathcal{L}'$ cannot be formed by a finite
  concatenation of the prime subladder $\mathcal{L}''$ of
  $\mathcal{L}$.  Since both $\mathcal{L}'$ and $\mathcal{L}''$
  bi-infinitely cover $\mathcal{L}$ having the same starting pivot
  points as $p$ and $q$, the length of the period of $\mathcal{L}$
  must be a common divisor $d$ of the two lengths of
  periods of $\mathcal{L}'$ and $\mathcal{L}''$. Since we assumed
  $\mathcal{L}'$ cannot be formed by $\mathcal{L}''$, $d$ must be
  strictly less than the length of $\mathcal{L}''$.  However, from the
  minimality of prime subladder, the value $d$ must be odd. This implies
  the period of $\mathcal{L}'$ is odd, since the length of a prime
  subladder is always even. Thus $p$ and $f(p)$ must lie on the different
  side of $\mathcal{L}$, which is absurd by the second statement of
  Proposition \ref{prop:BiInfiniteLadder}.
\end{proof}

\section{Translation Length on Farey Graph}
\label{sec:TrLengthOnFareyGraph}
In this section, we provide our main theorem: there exists a
\textit{geodesic axis} in the Farey graph of a given Anosov mapping class.
This theorem allows to define the \textit{translation length} of
Anosov mapping class on the Farey graph, which is our initial objective to
calculate. Indeed, after proving the main theorem, we provide a
concrete algorithm to calculate the translation length of any
Anosov element.

\begin{THM}[Existence of Geodesic Axis]
  \label{thm:geodAxis}
  Let $f$ be an Anosov map. Then there exists a bi-infinite geodesic
  $\mathcal{P}$ in $\mathcal{F}$ on which $f$ acts by translation.
\end{THM}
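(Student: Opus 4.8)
The plan is to assemble the machinery developed in Sections \ref{sec:LadderInFareyGraph} and \ref{sec:PSLActionOnFareyGraph}: construct the canonical $f$-invariant periodic ladder, extract a bi-infinite efficient geodesic inside it by periodic concatenation, promote it to a geodesic of the whole Farey graph, and then read off the translation action from the invariance of the ladder. Throughout I use the identification from the introduction of an Anosov map with a hyperbolic element $f \in \PSL(2,\mathbb{Z})$, so that $f$ has an axis $A_f$ in $\hp$ joining its two fixed points.

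First I would let $\mathcal{L}$ be the bi-infinite ladder associated with $A_f$. By Proposition \ref{prop:BiInfiniteLadder}, $\mathcal{L}$ is the unique ladder stabilized by $f$, and $f$ stabilizes each of its two sides. By Theorem \ref{thm:periodicBiInfiniteLadder}, $\mathcal{L}$ is periodic, and there is a finite subladder $\mathcal{L}'$ — a finite concatenation of prime subladders of $\mathcal{L}$ — with $\infcup f^k(\mathcal{L}') = \mathcal{L}$. Next I would apply Proposition \ref{prop:ConcatGeodesic} to this $\mathcal{L}'$ to obtain an efficient geodesic $\mathcal{P}''$ in $\mathcal{L}'$ whose endpoints lie on the same side, such that the bi-infinite concatenation $\mathcal{P} = \infcup f^k(\mathcal{P}'')$ is a bi-infinite efficient geodesic in $\mathcal{L}$.

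To promote $\mathcal{P}$ to a geodesic of the whole Farey graph, I would observe that its two endpoints are exactly the endpoints of $A_f$, i.e.\ the fixed points of $f$, which are quadratic irrationals and hence irrational points of $\partial\hp$. Proposition \ref{prop:GenLgeodIffFgeod} then applies verbatim and shows that $\mathcal{P}$, being a geodesic in $\mathcal{L} = \mathcal{L}(x,y)$ for these irrational endpoints $x,y$, is a bi-infinite geodesic in $\mathcal{F}$.

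It remains to check the translation statement. Because $f$ preserves the type of ladders (Proposition \ref{prop:preservesLadder}) and the sides of $\mathcal{L}$, it carries $\mathcal{P}''$ in $\mathcal{L}'$ to the corresponding efficient geodesic $f^k(\mathcal{P}'')$ in the block $f^k(\mathcal{L}')$; hence $\mathcal{P} = \infcup f^k(\mathcal{P}'')$ literally, and applying $f$ merely reindexes the blocks, giving $f(\mathcal{P}) = \infcup f^{k+1}(\mathcal{P}'') = \mathcal{P}$. Thus $\mathcal{P}$ is $f$-invariant, and since $f$ is an orientation-preserving isometry with no fixed vertex, it shifts the vertices of $\mathcal{P}$ monotonically along the geodesic, i.e.\ it acts by translation. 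The genuine difficulty in this argument — that naively gluing the per-block efficient geodesics can violate the efficient moving condition at the seam between consecutive copies of $\mathcal{L}'$ (the \emph{reluctant move} phenomenon) — has already been absorbed into Proposition \ref{prop:ConcatGeodesic}; what I must still handle with care is only the bookkeeping that the ``same side'' endpoint condition makes the concatenation a single connected path, and that the chosen geodesic is genuinely $f$-equivariant so that the reindexing identity above holds on the nose.
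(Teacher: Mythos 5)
Your proposal is correct and follows essentially the same route as the paper's proof: Theorem \ref{thm:periodicBiInfiniteLadder} gives the periodic $f$-invariant ladder with its finite block $\mathcal{L}'$, Proposition \ref{prop:ConcatGeodesic} supplies the efficient geodesic whose bi-infinite concatenation is $f$-invariant, and the translation action follows from the invariance. Your explicit appeal to Proposition \ref{prop:GenLgeodIffFgeod} to pass from a geodesic in $\mathcal{L}$ to one in $\mathcal{F}$ is a detail the paper leaves implicit, not a different argument.
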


\begin{proof}
  We identify $f$ as a hyperbolic element of $\PSL(2,\mathbb{Z})$. Then the
  existence of the invariant bi-infinite geodesic $\mathcal{P}$ of $f$
  is a direct consequence of Proposition \ref{prop:ConcatGeodesic}
  and Theorem \ref{thm:periodicBiInfiniteLadder}. Indeed, there
  exists a ladder $\mathcal{L}'$ which is a finite concatenation of a
  prime subladder of the associated bi-infinite ladder $\mathcal{L}$
  of $f$ satisfying:
  \[
    \infcup f^k(\mathcal{L}') = \mathcal{L}.
  \]
  Hence, there exists an efficient geodesic $\mathcal{P}'$ in $\mathcal{L}'$ whose bi-infinite
  concatenation makes the efficient geodesic $\mathcal{P}$ in $\mathcal{L}$.
  Therefore, $\mathcal{P}$ is invariant under $f$, as
  \[
    \infcup f^k(\mathcal{P}') = \mathcal{P}.
    \]
  Since $f$ acts on $\mathcal{F}$ by translation, so does on $\mathcal{P}$, which was what was wanted.
\end{proof}

\begin{RMK}
  Since we have considered only \textit{efficient} bi-infinite
  geodesics, there may be another geodesic in $\mathcal{L}'$, whose
  concatenation makes another $f$-invariant bi-infinite geodesic in
  $\mathcal{F}$. In particular, the geodesic axis $\mathcal{P}$ in
  $\mathcal{F}$ of $f$ in may \textit{not} be unique.
\end{RMK}

Now we provide a concrete algorithm to calculate the translation
length of an Anosov mapping class. To this end, we introduce a special
map called \textit{ancestor map} on vertices of the Farey graph, which can
be found in \cite{beardon2012geodesic}.  Let $\VERT(\mathcal{F})$ be
the set of vertices of the Farey graph.  For each
$v \in \VERT(\mathcal{F})$, we can give a lexicographical order on the
neighbors of $v$, first order by denominator in increasing order, then
by numerator as the same way. Since we restrict the denominators of
elements of $\VERT(\mathcal{F})$ to be positive, this ordering on
$\VERT(\mathcal{F})$ is a well-ordering. Therefore, the map
$\alpha : \VERT{\mathcal{F}} \to \VERT{\mathcal{F}}$ associating each
vertex to its smallest(with respect to the order we gave) neighbor is
well-defined. We call this map $\alpha$ as \textbf{ancestor map}.
Note that for any extended rational number $p$, we have
$\alpha^n(p) = \infty$ for some non-negative $n$.  Choosing such $n$
as the minimum one, we have a path from $p$ to $\infty$ obtained by iterating
ancestor map on $p$:
\[
  p, \alpha(p), \alpha^2(p), \cdots, \alpha^n(p)=\infty.
\]
We call such a path \textbf{ancestor path}, which turns out to be a
\textit{geodesic} in $\mathcal{F}$ connecting $p$ and
$\infty$.(See \cite{beardon2012geodesic})

When two fixed points of a hyperbolic element have opposite signs,
then the bi-infinite geodesic connecting these two points and the edge
$e = \overline{0,\infty}$ must intersect. Thus, in this case, $e$ must be a rung of
the associated ladder of $f$. We call such a ladder containing
$e=\overline{0,\infty}$ a \textbf{standard ladder}. In fact, whether a given
hyperbolic element has a standard invariant ladder or not can be
immediately identified by their signs of entries.

\begin{PROP}
  \label{prop:stdrung}
  Let $A =
  \begin{pmatrix}
    a & b \\ c & d
  \end{pmatrix}
  \in \PSL(2,\mathbb{Z})$ be a hyperbolic element. Then the associated
  ladder of $A$ is standard if and only if $b, c$ have the
  same sign.
\end{PROP}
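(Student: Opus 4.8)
The plan is to translate the geometric condition \emph{standard} into an arithmetic condition on the two fixed points of $A$, and then read off that condition from Vieta's formulas applied to the fixed-point equation.

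First I would unwind the definitions. The associated ladder is standard exactly when the edge $e = \overline{0,\infty}$ is one of its rungs. Geometrically $e$ is the imaginary axis in $\hp$, and by Proposition \ref{prop:E-EdgesAreRungs} together with the paragraph preceding the statement, $e$ is a rung of the ladder associated with the axis $A_f$ if and only if the interior of $e$ meets $A_f$; equivalently, $e$ separates the two endpoints of $A_f$, namely the two fixed points of $A$ on $\partial\hp$. Since the imaginary axis separates two real boundary points precisely when they lie on opposite sides of $0$, I obtain the reduction: the ladder is standard if and only if the two fixed points of $A$ have opposite signs.

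Next I would locate the fixed points as the roots of $cz^2 + (d-a)z - b = 0$, so that by Vieta's formula their product equals $-b/c$. Before invoking this I must clear a short degeneracy check: if $b = 0$ or $c = 0$, then $A$ is triangular with $ad = 1$, forcing $|\tr(A)| = |a+d| = 2$ and contradicting hyperbolicity; hence $b, c \neq 0$, both fixed points are finite, and neither equals $0$. Two such nonzero reals have opposite signs if and only if their product is negative, so the fixed points have opposite signs iff $-b/c < 0$, i.e.\ iff $b$ and $c$ share the same sign. Combining this with the previous step yields the claimed equivalence. I expect the only delicate point to be the first step---arguing cleanly that $e$ being a rung coincides with $e$ separating the fixed points---whereas the remainder is a one-line Vieta computation, with the triangular-case check serving only to guarantee that the fixed points are genuine finite nonzero reals, so that ``opposite signs'' is truly equivalent to ``negative product.''
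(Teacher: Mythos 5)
Your argument is correct and follows essentially the same route as the paper: reduce ``standard'' to the axis crossing $e=\overline{0,\infty}$, hence to the fixed points having opposite signs, and then read off the sign of the product $-b/c$ from the fixed-point equation $cx^2+(d-a)x-b=0$. The only difference is that you explicitly verify $b,c\neq 0$ via hyperbolicity, a small degeneracy check the paper leaves implicit.
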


\begin{proof}
  Let $x \in \mathbb{R}$ be one of the fixed points of $A$. Then we
  have $\frac{ax+b}{cx+d} = x$, so $cx^2+(d-a)x-b=0$. Since the
  associated ladder of $A$ contains $e=\overline{0,\infty}$ if and
  only if the hyperbolic axis of $A$ transverses $e$ if and only if two fixed
  points have different sign if and only if $\frac{-b}{c}<0$ if and
  only if $b$ and $c$ have the same sign.
\end{proof}

The following proposition suggests a hyperbolic element with a standard invariant ladder
is quite \textit{standard within its conjugacy class} of $\PSL(2,\mathbb{Z})$. This result
will be needed in section \ref{ssec:Lengthspectrum}.

\begin{PROP}
  \label{prop:stdconj}
  Any hyperbolic element of $\PSL(2,\mathbb{Z})$ is conjugate to an element with standard
  invariant ladder.
\end{PROP}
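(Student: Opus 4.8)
The plan is to argue geometrically, exploiting two facts: that conjugation by $g\in\PSL(2,\mathbb{Z})$ moves the axis of a hyperbolic element by the corresponding M\"obius transformation, and that $\PSL(2,\mathbb{Z})$ acts transitively on the edges of the Farey graph. Reformulating the goal via Proposition \ref{prop:stdrung}, an element has a standard invariant ladder exactly when its axis crosses the distinguished edge $\overline{0,\infty}$; so I want to conjugate $A$ so that its axis is carried across $\overline{0,\infty}$.

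First I would recall that if $A$ is hyperbolic with axis $\gamma$ (the bi-infinite geodesic through its two fixed points $\alpha,\beta$), then for any $g\in\PSL(2,\mathbb{Z})$ the conjugate $gAg^{-1}$ fixes $g(\alpha)$ and $g(\beta)$, so its axis is precisely $g(\gamma)$. By Proposition \ref{prop:BiInfiniteLadder} the invariant ladder $\mathcal{L}$ of $A$ is bi-infinite, hence it has infinitely many rungs; in particular $\gamma$ crosses at least one Farey edge $e$. Combining this with the definition of a standard ladder, it suffices to produce $g\in\PSL(2,\mathbb{Z})$ with $g(e)=\overline{0,\infty}$: for then $\overline{0,\infty}=g(e)$ meets the axis $g(\gamma)$ of $gAg^{-1}$, so $\overline{0,\infty}$ is a rung of the invariant ladder of $gAg^{-1}$, which is therefore standard.

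To exhibit such a $g$ I would invoke edge-transitivity. If $e$ joins $\frac{p}{q}$ and $\frac{r}{s}$ with $ps-qr=\pm1$, then $M=\begin{pmatrix} p & r \\ q & s \end{pmatrix}$ sends $\overline{0,\infty}=\overline{\frac{0}{1},\frac{1}{0}}$ onto $e$ (as it carries $\frac{1}{0}\mapsto\frac{p}{q}$ and $\frac{0}{1}\mapsto\frac{r}{s}$). After possibly swapping the two endpoints of $e$ we may assume $\det M=1$, so $M\in\SL(2,\mathbb{Z})$ and $g:=M^{-1}\in\PSL(2,\mathbb{Z})$ satisfies $g(e)=\overline{0,\infty}$, completing the argument.

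The proof is essentially immediate once edge-transitivity is available, so the only point requiring genuine care is the determinant bookkeeping: ensuring the conjugating element lands in $\SL(2,\mathbb{Z})$ rather than merely $\GL(2,\mathbb{Z})$. This is the main (if minor) obstacle, and it is resolved by reversing the orientation of $e$ when $ps-qr=-1$, together with the observation that the bi-infiniteness of $\mathcal{L}$ genuinely guarantees a rung to move.
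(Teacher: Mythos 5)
Your argument is correct and follows essentially the same route as the paper: pick a rung $\overline{\frac{p}{q},\frac{r}{s}}$ of the invariant bi-infinite ladder, form the matrix with columns $(p,q)$ and $(r,s)$ ordered so the determinant is $+1$, and conjugate so that this rung is carried to $\overline{0,\infty}$. Your version is in fact slightly more careful than the paper's about which of $M$ or $M^{-1}$ is the conjugator, but the content is identical.
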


\begin{proof}
  Let $f$ be a hyperbolic element of $\PSL(2,\mathbb{Z})$ with an invariant ladder $\mathcal{L}$.
  Pick any rung $\overline{\frac{p}{q},\frac{r}{s}}$ of $\mathcal{L}$. Then one of the following two matrices
  \[
    \begin{pmatrix}
      p & r \\ q & s
    \end{pmatrix}
    \quad \text{or}
    \quad
    \begin{pmatrix}
      r & p \\ s & q
    \end{pmatrix}
  \]
  must be in $\PSL(2,\mathbb{Z})$. Denote by $A$ the one in
  $\PSL(2,\mathbb{Z})$. In fact, $A$ maps the rung
  $\overline{\frac{p}{q},\frac{r}{s}}$ of $\mathcal{L}$ to an edge
  $\overline{0,\infty}$, so $\mathcal{L}$ is mapped to a standard ladder
  $\mathcal{L}'$ under $A$. In fact, the very hyperbolic element $A f A^{-1} \in \PSL(2,\mathbb{Z})$ 
  is associated with the standard invariant ladder $\mathcal{L}'$, which was what was
  wanted.
\end{proof}

Aside from standard ladders, we can find a rung of the associated ladder using ancestor map:

\begin{PROP}
  \label{prop:paraladder}
  Let $f$ be a hyperbolic element in $\PSL(2,\mathbb{Z})$. Let $\delta$ and $\overline{\delta}$
  be the two fixed points of $f$ and set $m = \frac{\delta + \overline{\delta}}{2}$.
  Then there exists $k \ge 0$ such that the edge formed by $\alpha^k(m)$ and $\alpha^{k+1}(m)$
  is a rung of the associated bi-infinite ladder of $f$.
\end{PROP}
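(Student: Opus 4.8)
The plan is to exploit the correspondence from Proposition \ref{prop:E-EdgesAreRungs}: an edge is a rung of the associated ladder $\mathcal{L}=\mathcal{L}(\delta,\overline{\delta})$ precisely when it lies in $E(\delta,\overline{\delta})$, i.e.\ when it separates the two fixed points $\delta$ and $\overline{\delta}$. So it suffices to produce some $k\ge 0$ for which the Farey edge $\overline{\alpha^k(m),\alpha^{k+1}(m)}$ separates $\delta$ from $\overline{\delta}$.

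First I would record that $m$ is a genuine finite vertex of the Farey graph lying strictly between the two fixed points. Writing $f=\begin{pmatrix} a & b \\ c & d\end{pmatrix}$, the fixed points solve $cx^2+(d-a)x-b=0$; since $f$ is hyperbolic, $c\neq 0$ (otherwise $ad=1$ with integer entries forces $a=d=\pm 1$ and $|\tr f|=2$, a contradiction), so by Vieta's formula $m=\frac{\delta+\overline{\delta}}{2}=\frac{a-d}{2c}\in\mathbb{Q}$. As $\delta\neq\overline{\delta}$ are irrational, $m$ is distinct from both and lies strictly between them; say $\overline{\delta}<m<\delta$.

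Next I would consider the ancestor path $m=\alpha^0(m),\alpha^1(m),\dots,\alpha^n(m)=\infty$, a finite sequence of Farey vertices in which each consecutive pair spans an edge. The terminal vertex $\infty$ is not contained in the bounded interval $(\overline{\delta},\delta)$, while the initial vertex $m$ is. Hence the set $\{\,i:\overline{\delta}<\alpha^i(m)<\delta\,\}$ is nonempty (it contains $0$) and does not contain $n$; let $k$ be its largest element, so $0\le k<n$, the vertex $\alpha^k(m)$ lies strictly between $\overline{\delta}$ and $\delta$, and $\alpha^{k+1}(m)$ does not. Because $\delta,\overline{\delta}$ are irrational while the ancestor vertices are extended rationals, ``not strictly between'' means $\alpha^{k+1}(m)$ is strictly less than $\overline{\delta}$, strictly greater than $\delta$, or equal to $\infty$.

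Finally I would verify, by a short three-way case check, that the edge $e=\overline{\alpha^k(m),\alpha^{k+1}(m)}$ separates $\delta$ from $\overline{\delta}$: the boundary interval of $\mathbb{R}$ cut off by $e$ (the interval between its two endpoints, or a half-line when one endpoint is $\infty$) contains exactly one of $\delta,\overline{\delta}$ in each subcase, using $\overline{\delta}<\alpha^k(m)<\delta$. Thus $e\in E(\delta,\overline{\delta})$, and by Proposition \ref{prop:E-EdgesAreRungs} it is a rung of the associated bi-infinite ladder of $f$. The only genuine subtlety---the main obstacle---is ensuring that the ancestor path exits the interval $(\overline{\delta},\delta)$ along an edge that straddles \emph{both} fixed points rather than merely touching one endpoint; the irrationality of $\delta,\overline{\delta}$ is exactly what rules out a degenerate exit and keeps the case analysis clean.
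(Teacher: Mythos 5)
Your proposal is correct and follows essentially the same route as the paper: take the ancestor path from $m$ to $\infty$, observe it must exit the interval $(\overline{\delta},\delta)$, and conclude via Proposition \ref{prop:E-EdgesAreRungs} that the exiting edge is a rung. The extra details you supply (the Vieta computation $m=\frac{a-d}{2c}$ showing $m\in\mathbb{Q}$, and the use of irrationality of $\delta,\overline{\delta}$ to rule out a degenerate exit) are sound elaborations of steps the paper asserts more briefly.
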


\begin{proof}
  Without loss of generality, let $\overline{\delta} < \delta$.  Note
  that $m$ is a rational number, so $m$ represents a vertex in
  $\mathcal{F}$.  Also, $m$ and $\infty$ are separated by the
  bi-infinite geodesic with ends $\delta$ and $\overline{\delta}$. As
  the ancestor path for $m$ is a path from $m$ to $\infty$, there
  exists $k \ge 0$ such that
  $\alpha^k(m) \in (\overline{\delta},\delta)$ and
  $\alpha^{k+1}(m) \notin (\overline{\delta}, \delta)$.  Then the edge
  $e$ with ends $\alpha^{k}(m)$ and $\alpha^{k+1}(m)$ must transverse
  the geodesic connecting $\overline{\delta}$ and $\delta$. By
  Proposition \ref{prop:E-EdgesAreRungs}, the edge $e$ must be a rung
  of the bi-infinite ladder associated with $f$.
\end{proof}

Now, write $p=\alpha^k(m)$. Then denote by $\widetilde{\mathcal{L}}$
the subladder bounded by two edges $\overline{p, \alpha(p)}$ and
$\overline{f(p), f(\alpha(p))}$. If the length of
$\widetilde{\mathcal{L}}$ is \textit{even}, then $p$ must be a pivot
point, so $\widetilde{\mathcal{L}}$ comes out to be a finite
concatenation of a prime subladder of
$\mathcal{L}(\overline{\delta},\delta)$. In turn, we are left to
realize the efficient geodesic in $\widetilde{\mathcal{L}}$ and
calculate its length to find the translation length of $f$.  However,
if the length of $\widetilde{L}$ is \textit{odd}, then $p$ is not a
pivot point of $\mathcal{L}$. In this case, we need \textit{calibrate}
$\widetilde{L}$ to find another subladder with an even length. The calibration
process is simple: From an odd-length ladder of type
$(a_1,\cdots,a_n)$, generate an even-length ladder
$(a_1+a_n,a_2,\cdots,a_{n-1})$ whose bi-infinite concatenation with
$f$ yields $\mathcal{L}$ as well. (Refer to Algorithm
\ref{alg:calLadder} in Appendix) Thus, the case when $\widetilde{\mathcal{L}}$ has an odd length
is reduced to when $\widetilde{\mathcal{L}}$ has an even length.

It remains to realize an efficient geodesic in the \textit{even}
length subladder $\widetilde{\mathcal{L}}$ to find the translation
length of $f$.  Since the types of the ladder are given, we can use
the criteria introduced in Section \ref{sec:EffGeodesicsInLadder} to
find the efficient geodesic. To illustrate the aforementioned
process, we provide some examples:

\begin{EXA}
  \label{exa:exa1}
  Consider an Anosov map $f$ represented by a matrix in $\PSL(2,\mathbb{Z})$:
  \[
    \begin{pmatrix}277 & 60 \\ 337 & 73\end{pmatrix}
  \]
  The fixed points of $f$ are $\frac{77 \pm \sqrt{26149}}{337}$. Since
  their signs are different.  the ladder associated with $f$ must be
  standard.  In fact, the ladder bounded by two edges
  $e=\overline{0,\infty}$ and $f(e)$ is of the type
  $(1,4,1,1,1,1,1,1,4)$, which can be calibrated into one with
  $(4,1,1,1,1,1,1,5)$. (See Figure \ref{fig:exa1}) Thus, the
  corresponding efficient geodesic is $tpppt$, length $5$.
  Hence, the translation length of $f$ is $5$.
  \begin{figure}[h]\centering    
  \includegraphics[width = \linewidth]{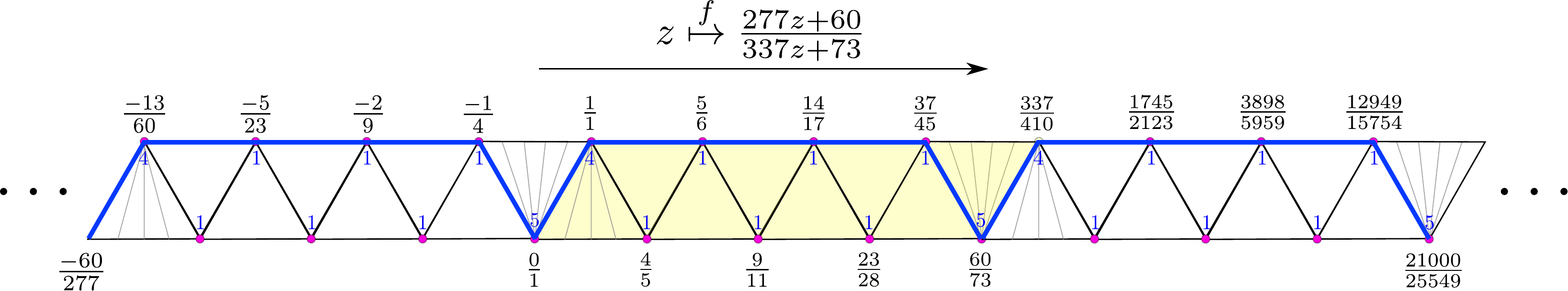}
  \caption[]{Invariant geodesic for $f$ represented by $\begin{pmatrix}227 & 60 \\ 337 & 73 \end{pmatrix}$}
  \label{fig:exa1}
\end{figure}
\end{EXA}

Now we provide a non-standard ladder example which forces to use Proposition \ref{prop:paraladder}
to find a rung of the associated ladder.

\begin{EXA}
  \label{exa:exa2}
  Consider an Anosov map $f$ represented by a matrix in $\PSL(2,\mathbb{Z})$:
  \[
    \begin{pmatrix}65 & -56 \\ 101 & -87\end{pmatrix}
  \]
\end{EXA}
In this case, two fixed points of $f$ are
$\frac{76 \pm 2\sqrt{30}}{101}$, so its midpoint $m
=\frac{76}{101}$. Since two points have the same sign, we exploit Proposition
\ref{prop:paraladder} to find a rung in the associated ladder
$\mathcal{L}$.  The ancestor path for $\frac{76}{101}$ is:
\[
  \frac{76}{101} \to \frac{3}{4} \to \frac{1}{1} \to \frac{1}{0}.
\]
Since two fixed points are approximately $0.644\cdots$ and
$0.860\cdots$, the edge $e$ connecting $\frac{3}{4}$ and $\frac{1}{1}$
must be a rung of $\mathcal{L}$.  Then the ladder bounded by
$e=\overline{\frac11,\frac34}$ and $f(e)$ is of the type $(1,5,3)$, which can be
calibrated into one with $(5,4)$, in which an efficient geodesic has
the form $tt$. (see Figure \ref{fig:exa2}) Hence, the translation
length of $f$ is $2$.
\begin{figure}[ht]\centering    
  \includegraphics[width = \linewidth]{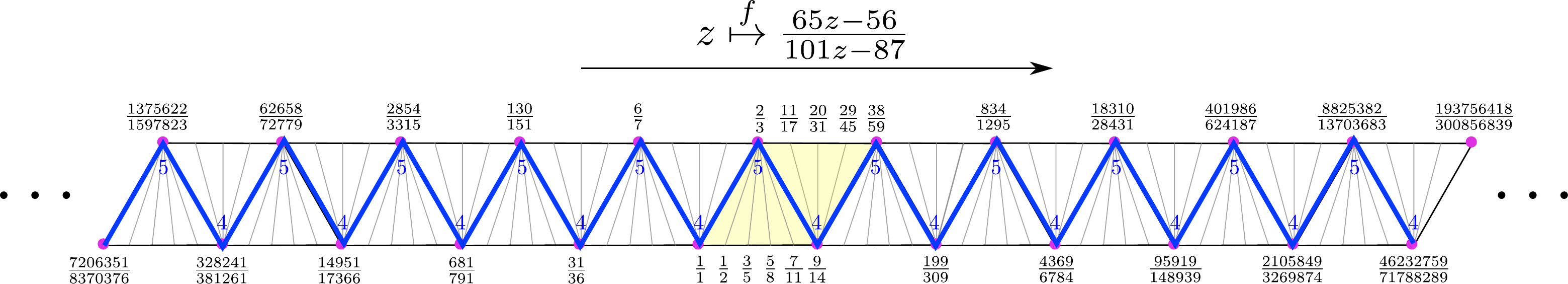}
  \caption[]{Invariant geodesic for $f$ represented by $\begin{pmatrix}65&-56\\101&-87\end{pmatrix}$}
  \label{fig:exa2}
\end{figure}

\section{Applications}
\label{sec:appl}

\subsection{Minimal Word}
\label{ssec:minword}

Shin and Strenner \cite{shin2016pseudo} showed that there
is a non-Penner type pseudo-Anosov mapping class $\Mod(S)$ when $S$ is a non-sporadic surface.
They also showed this is not the case when $S$ is a sporadic surface.
In particular, they showed the following(Lemma A.1 in \cite{shin2016pseudo}):

\begin{LEM}
  Let $f$ be a hyperbolic element in $\mathcal{F}$, and $\gamma$ be
  the associated bi-infinite geodesic.  Let $e_0$ be an edge of
  $\mathcal{F}$ intersecting $\gamma$.  Let $a$ and $b$ be the
  endpoints of $e_0$ on the left and right hand side of $\gamma$,
  respectively. Following the orientation of $\gamma$, record the cutting
  sequence $s_1\cdots s_n$ starting from the next rung of $e_0$ and ending at $e_n =f(e_0)$.
  Let $\tau_a$ and $\tau_b$ be the rotations of
  $\mathcal{F}$ by one tile to the right about the points $a$ and $b$,
  respectively, and introduce the notation
  \[
    \tau(s)=
    \begin{cases}
      \tau_a^{-1} & \text{if $s = L$}\\
      \tau_b & \text{if $s=R$}.
      \end{cases}
    \]
    Then
    \[
      f = \tau(s_1) \circ \cdots \circ \tau(s_n),
    \]
    where the rotations are applied in right-to-left order.
\end{LEM}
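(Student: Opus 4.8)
The plan is to write $f$ as a composition of elementary one-tile rotations about the pivot vertices that the axis $\gamma$ meets, and then to rewrite each such rotation as a conjugate of $\tau_a^{\pm1}$ or $\tau_b^{\pm1}$ so that the conjugating factors telescope. The rigidity principle I will use is that $\PSL(2,\mathbb{Z})$ acts \emph{freely} on the directed edges of the Farey tessellation: an element fixing a directed edge fixes both of its endpoints, hence is represented by a diagonal matrix in $\SL(2,\mathbb{Z})$ and so is trivial in $\PSL(2,\mathbb{Z})$. Consequently, to establish the identity it suffices to check that $f$ and the claimed product send one fixed directed edge to the same directed edge, since their ``difference'' would then fix a directed edge and hence be the identity.

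First I would orient $e_0$ across $\gamma$, from its left endpoint $a$ to its right endpoint $b$, and write $\vec{e}_0,\vec{e}_1,\dots,\vec{e}_n=f(\vec{e}_0)$ for the successive rungs crossed by $\gamma$, each oriented consistently from left to right across $\gamma$. Consecutive rungs $\vec{e}_{i-1},\vec{e}_i$ bound a single Farey triangle whose pivot vertex $v_i$ lies on the left of $\gamma$ exactly when $s_i=L$ and on the right exactly when $s_i=R$; this is precisely what the cutting sequence records (cf.\ Proposition \ref{prop:cuttingSeqAndLadder}). Let $R_i\in\PSL(2,\mathbb{Z})$ be the one-tile rotation about $v_i$ carrying $\vec{e}_{i-1}$ to $\vec{e}_i$ inside the fan at $v_i$. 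Since $R_i$ moves $\vec{e}_{i-1}$ to $\vec{e}_i$, the composition $R_n\circ\cdots\circ R_1$ carries $\vec{e}_0$ to $\vec{e}_n=f(\vec{e}_0)$, and by freeness this already forces $R_n\circ\cdots\circ R_1=f$.

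Next comes the telescoping. Set $g_0=\mathrm{id}$ and $g_i=R_i\circ\cdots\circ R_1$, so $g_{i-1}$ is an orientation-preserving isometry carrying $\vec{e}_0$ to $\vec{e}_{i-1}$ (here I use Proposition \ref{prop:preservesLadder}, that such elements respect the ladder structure). Because $g_{i-1}$ preserves orientation, it carries the left endpoint $a$ of $\vec{e}_0$ to the left endpoint of $\vec{e}_{i-1}$ and $b$ to its right endpoint; since $v_i$ is the left or right endpoint of $e_{i-1}$ according as $s_i=L$ or $s_i=R$, we get $g_{i-1}^{-1}(v_i)=a$ when $s_i=L$ and $g_{i-1}^{-1}(v_i)=b$ when $s_i=R$. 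Conjugating a one-tile rotation by $g_{i-1}$ produces the one-tile rotation about the image vertex, turning in the matching direction, and a single local computation at one triangle pins the signs down to $R_i=g_{i-1}\,\tau(s_i)\,g_{i-1}^{-1}$ with $\tau(L)=\tau_a^{-1}$ and $\tau(R)=\tau_b$. Substituting into $g_i=R_i\circ g_{i-1}$ gives $g_i=g_{i-1}\,\tau(s_i)$, so by induction $g_n=\tau(s_1)\circ\cdots\circ\tau(s_n)$; together with $g_n=f$ this is exactly the asserted decomposition in right-to-left order.

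The routine part is the telescoping algebra, which is forced once the conjugation formula is in hand. The delicate step, and the one I expect to be the main obstacle, is the orientation bookkeeping in that conjugation: verifying that conjugating by the orientation-preserving $g_{i-1}$ sends a one-tile rotation to a one-tile rotation about the \emph{image} vertex turning in the \emph{correct} sense, and that this sense is precisely $\tau_a^{-1}$ for an $L$ and $\tau_b$ for an $R$ rather than their inverses. I would settle this once and for all by analyzing a single triangle: with $e_0$ oriented from $a$ to $b$ and $\gamma$ running forward, the case $s_1=L$ means the next rung shares the vertex $a$ and sits one tile counterclockwise in the fan at $a$, which is exactly $\tau_a^{-1}$, while the $R$ case is the mirror image giving $\tau_b$. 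Since every $g_{i-1}$ is orientation-preserving, this one local picture then propagates faithfully along the entire ladder.
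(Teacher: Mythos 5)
Your argument is correct, but note that the paper itself offers no proof of this statement: it is quoted verbatim as Lemma A.1 of \cite{shin2016pseudo} and used as a black box, so the only comparison available is with that cited source rather than with anything in this paper. Your route --- (i) the action of $\PSL(2,\mathbb{Z})$ on \emph{directed} Farey edges is free, so it suffices to match the images of $\vec{e}_0$; (ii) $f=R_n\circ\cdots\circ R_1$ where $R_i$ is the one-tile rotation about the pivot $v_i$ carrying $\vec{e}_{i-1}$ to $\vec{e}_i$; (iii) telescoping $R_i=g_{i-1}\,\tau(s_i)\,g_{i-1}^{-1}$ with $g_{i-1}=R_{i-1}\cdots R_1$ --- is the standard and essentially the same decomposition-into-elementary-rotations argument, and all the steps check out. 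Two small points: the freeness claim should say the stabilizer of a directed edge is \emph{conjugate} to a diagonal integer matrix of determinant $1$, hence $\pm I$; and you should record explicitly that $f(\vec{e}_0)=\vec{e}_n$ as a \emph{directed} edge because $f$ translates along $\gamma$ preserving its orientation and hence preserves the left and right sides of $\gamma$. Your citation of Proposition \ref{prop:preservesLadder} in the telescoping step is unnecessary (the identity $g_{i-1}(\vec{e}_0)=\vec{e}_{i-1}$ is immediate from the construction of the $R_i$), and you are right that the only genuinely delicate point is the sign convention for ``one tile to the right''; your one-triangle computation ($L$ forces a counterclockwise turn at $a$, hence $\tau_a^{-1}$; $R$ is the mirror image, hence $\tau_b$) settles it, and orientation-preservation of $g_{i-1}$ propagates that single local picture along the ladder as you say.
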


Thus, we can regard a hyperbolic element of $\PSL(2,\mathbb{Z})$ as a
word with letters $\{\tau_a^{-1}, \tau_b\}$, where $a$ and $b$ are
chosen to form an edge $\overline{ab}$ intersecting $\gamma$, so
$\overline{ab}$ becomes a rung of the associated ladder to $f$.

Then the natural question to ask is the following: \textit{Using the fixed number of
  $\tau_a^{-1}$ and $\tau_b$, what is the form of words whose
  translation length is minimal?}
Our result answers to this question.

\begin{THM}
  \label{thm:minimal_length}
  Let $a$ and $b$ be isotopy classes of essential simple closed
  curves with intersection number 1 on a torus.
  Let
  \[
    \mathcal{S} = \{\underbrace{\tau_a, \cdots, \tau_a}_m,
    \underbrace{\tau_b, \cdots, \tau_b}_n\},
  \]
  where $m,n \ge 1$.
  Then among the words formed by the elements in $\mathcal{S}$,
  the word $\tau_a^{-m}\tau_b^n$(upto cyclic permutation) has the shortest translation length.
  Moreover, that shortest length is at most 2.
\end{THM}

\begin{proof}
  The latter assertion obvious, since the length of the ladder
  associated with $\tau_a^{-m}\tau_b^n$ is 2.
  
  When at least one of $m$ or $n$ is $1$, there is only one word
  formed by the elements in $\mathcal{S}$ upto cyclic permutation, so
  there is nothing to prove.

  Now assume both $m$ and $n$ are bigger than 1. Then a word $w$ which
  is not of the form $\tau_a^{-m}\tau_b^n$ upto cyclic permutation
  must have the ladder with length at least 4. However, in a ladder
  with length 4, the length of the efficient geodesic is at least
  2(`$pp$', if possible, is the shortest one), so $w$ cannot have
  the shorter translation length than $\tau_a^{-m}\tau_b^n$ have.
  Therefore, we proved $\tau_a^{-m}\tau_b^n$ produces the smallest
  translation length on $\mathcal{F}$ among the words formed by $\mathcal{S}$.
\end{proof}

\subsection{Effective Bound of Ratio of Teichm{\" u}ller to Curve Graph Translation Length}
\label{ssec:Lengthspectrum}

We can define the translation length on \textit{Teichm{\" u}ller
  space} as well. More precisely, for a non-sporadic surface
$S$ and any pseudo-Anosov map
$f \in \Mod(S)$, Bers \cite[Theorem 5]{bers1978extremal} showed that
there exists the axis $A_f$ in the Teichm{\" u}ller space
$\mathcal{T}(S)$ of $S$ which is invariant under the action of $f$ by
translation with the Teichm{\" u}ller distance
$l_{\mathcal{T}}(f) := \log \lambda$, where $\lambda>1$ is the
dilatation of $f$. The analogous property also holds for when $S$ is a
torus: an Anosov map acts on the Teichm{\" u}ller space of a torus
$\mathcal{T}(T^2)$ with the invariant axis by translation with the
hyperbolic distance $\log \lambda$, where $\lambda>1$ is the
leading eigenvalue of $f$.

 For non-sporadic surfaces $S=S_g$, Gadre, Hironaka, Kent and Leininger
\cite{Gadre2013Lipschitz} showed that the ratio
$\frac{l_{\mathcal{T}}(f)}{l_{\mathcal{C}}(f)}$ is bounded below by a
linear function of $\log(g)$ by constructing a pseudo-Anosov map $f$
which realizes the minimum. Aougab and Taylor \cite{aougab2015pseudo}
provided another infinite family of such ratio minimizers. We show
here the analogous result on the ratio for a torus also holds.

  To begin with, we formalize our observations on the connection between continued fractions and the ladders. For a finite sequence of positive integers $\mathcal{S}=(a_1,a_2,\cdots,a_n)$,
  we consider the following two continued fractions: when $n>1$, write
  \[
    \frac{p(\mathcal{S})}{q(\mathcal{S})} := [a_1;a_2,\cdots,a_n],
    \qquad \frac{r(\mathcal{S})}{s(\mathcal{S})} := [a_1;a_2,\cdots,a_{n-1}]
  \]
  where $(p(\mathcal{S}),q(\mathcal{S}))$ and
  $(r(\mathcal{S}),s(\mathcal{S}))$ are relatively prime positive
  integer pairs for $n>1$.  When $n=1$, define
  $(p(\mathcal{S}), q(\mathcal{S}), r(\mathcal{S}),
  s(\mathcal{S}))=(a_1,1,1,0)$ to make it consistent with the case of
  $n>1$.

  Observe that two fractions $\frac{p(\mathcal{S})}{q(\mathcal{S})}$
  and $\frac{r(\mathcal{S})}{s(\mathcal{S})}$ are realized by two
  adjacent pivot points forming an edge $e$ of a standard ladder of
  type $\mathcal{S}$, which is flanked by $e_0=\overline{0,\infty}$ and $e$.
  We denote by $\mathcal{L}(\mathcal{S})$ this
  standard ladder constructed from a sequence $\mathcal{S}$. Now, a
  matrix defined as
  \[
    \mathcal{M}(\mathcal{S}) :=
  \begin{pmatrix}
    p(S) & r(S) \\ q(S) & s(S)
  \end{pmatrix}
  \in \GL(2,\mathbb{Z})
\]
sends $e_0$ to $e$, so $\mathcal{M}(\mathcal{S})$ has the associated
ladder as $\mathcal{L}(\mathcal{S})$. Indeed, the function $\mathcal{M}$
from the set of finite sequences of positive integers to
$\GL(2,\mathbb{Z})$ is a \textit{structure-preserving map}: The matrix
constructed from the concatenation of two positive integer sequences can
be expressed by the product of each associated matrix.

\begin{LEM}
  \label{lem:concatM}
  Let $(a_1,\cdots, a_m)$ and $(b_1,\cdots,b_n)$ be two sequences
  of positive integers. Then
  \[
    \mathcal{M}(a_1,\cdots,a_m)\mathcal{M}(b_1,\cdots, b_n) =
    \mathcal{M}(a_1,\cdots, a_m, b_1, \cdots, b_n).
  \]
\end{LEM}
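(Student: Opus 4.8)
The plan is to recognize $\mathcal{M}(\mathcal{S})$ as a product of the elementary matrices arising from the continued-fraction algorithm, and then to deduce the lemma from associativity of matrix multiplication. Concretely, writing $M_a = \begin{pmatrix} a & 1 \\ 1 & 0 \end{pmatrix}$, the key step is the auxiliary identity
\[
  \mathcal{M}(a_1,\cdots,a_n) = M_{a_1} M_{a_2} \cdots M_{a_n}
\]
for every finite sequence of positive integers. Granting this, the lemma follows with no further computation: the left-hand side of the claimed equation becomes $(M_{a_1}\cdots M_{a_m})(M_{b_1}\cdots M_{b_n})$, and by associativity this equals the single product $M_{a_1}\cdots M_{a_m} M_{b_1}\cdots M_{b_n} = \mathcal{M}(a_1,\cdots,a_m,b_1,\cdots,b_n)$.

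First I would prove the auxiliary identity by induction on $n$. The base case $n=1$ holds directly from the convention $(p,q,r,s) = (a_1,1,1,0)$, which gives $\mathcal{M}((a_1)) = M_{a_1}$. For the inductive step, set $\mathcal{S}' = (a_1,\cdots,a_{n-1})$ and $\mathcal{S} = (a_1,\cdots,a_n)$. The crucial observation is that the trailing column of $\mathcal{M}(\mathcal{S})$ records the previous convergent: by definition $\frac{r(\mathcal{S})}{s(\mathcal{S})} = [a_1;a_2,\cdots,a_{n-1}] = \frac{p(\mathcal{S}')}{q(\mathcal{S}')}$, so that $r(\mathcal{S}) = p(\mathcal{S}')$ and $s(\mathcal{S}) = q(\mathcal{S}')$. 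I would then invoke the classical recurrence for continued-fraction convergents,
\[
  p(\mathcal{S}) = a_n\,p(\mathcal{S}') + r(\mathcal{S}'), \qquad
  q(\mathcal{S}) = a_n\,q(\mathcal{S}') + s(\mathcal{S}'),
\]
which expresses the $n$-th convergent in terms of the two preceding ones. Substituting these relations into the definition of $\mathcal{M}(\mathcal{S})$ and comparing entries shows precisely that $\mathcal{M}(\mathcal{S}) = \mathcal{M}(\mathcal{S}')\,M_{a_n}$, closing the induction.

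I expect the only real care to be bookkeeping rather than genuine difficulty: aligning the two-term convergent recurrence with the $(p,q,r,s)$ layout of the matrix, and checking that the special $n=1$ convention $(a_1,1,1,0)$ is exactly the seed that makes the recurrence valid once the truncated sequence has length one — that is, it simultaneously encodes the convergent $\frac{a_1}{1}$ and the preceding ``empty'' convergent $\frac{1}{0}$. Beyond this, everything is a routine comparison of matrix entries, and the convergent recurrence itself is standard and may simply be cited. Once the auxiliary identity is verified, the statement of the lemma is immediate.
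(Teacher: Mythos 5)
Your proposal is correct, and its skeleton is the same as the paper's: factor $\mathcal{M}(\mathcal{S})$ into a product of the elementary matrices $M_{a_i}=\mathcal{M}(a_i)$ and then obtain the lemma from associativity. The only real difference is the direction of the induction in the factorization step. The paper peels off the \emph{first} coefficient, proving $\mathcal{M}(a_1,\cdots,a_n)=\mathcal{M}(a_1)\mathcal{M}(a_2,\cdots,a_n)$ by a direct $2\times 2$ computation together with the identity $[a_1;a_2,\cdots,a_n]=a_1+1/[a_2;\cdots,a_n]$, which is immediate from the definition of a continued fraction; this makes the argument entirely self-contained. You peel off the \emph{last} coefficient, proving $\mathcal{M}(\mathcal{S})=\mathcal{M}(\mathcal{S}')M_{a_n}$ via the observation $r(\mathcal{S})=p(\mathcal{S}')$, $s(\mathcal{S})=q(\mathcal{S}')$ and the classical convergent recurrence $p_n=a_np_{n-1}+p_{n-2}$, $q_n=a_nq_{n-1}+q_{n-2}$, which you cite as standard. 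That is a legitimate shortcut, but note that all the content of your argument is concentrated in that cited recurrence (including the fact that it outputs the \emph{reduced, positive} pairs demanded by the paper's definition of $p(\mathcal{S}),q(\mathcal{S})$ --- this follows from the determinant being $\pm 1$, but it is worth saying). Your check that the $n=1$ convention $(a_1,1,1,0)$ is exactly the right seed, encoding both $\frac{a_1}{1}$ and the empty convergent $\frac{1}{0}$, is a point the paper leaves implicit.
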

\begin{proof}
  We claim
  \[
    \mathcal{M}(a_1,\cdots,a_n) = \mathcal{M}(a_1)\mathcal{M}(a_2,\cdots,a_n)
  \]
  for every $n$. Write $\mathcal{M}(a_2,\cdots,a_n) =
  \begin{pmatrix}
    p' & r' \\ q' & s'
  \end{pmatrix}
$. Then
\[
  \mathcal{M}(a_1)\mathcal{M}(a_2,\cdots,a_n)=
    \begin{pmatrix}
      a_1 & 1 \\ 1 & 0
    \end{pmatrix}  
    \begin{pmatrix}
      p' & r' \\ q' & s'
    \end{pmatrix}
    =
    \begin{pmatrix}
      a_1p' + q' & a_1 r' + s' \\ p' & r'
    \end{pmatrix}.
\]
  By definition,
  \begin{align*}
    \frac{a_1p'+q'}{p'} &= a_1 + \frac{1}{p'/q'}= a_1 + [0;a_2,\cdots,a_n] = [a_1;a_2,\cdots,a_n], \quad \\
    \frac{a_1r'+s'}{r'} &= a_1 + \frac{1}{r'/s'}= a_1 + [0;a_2,\cdots,a_{n-1}] = [a_1;a_2,\cdots,a_{n-1}].
  \end{align*}
  Hence, we have
  $\begin{pmatrix}
      a_1p' + q' & a_1 r' + s' \\ p' & r'
    \end{pmatrix} = \mathcal{M}(a_1,\cdots,a_n)$, as we have claimed.

  Finally, the claim implies Lemma \ref{lem:concatM}. This is because:
  \begin{align*}
    \mathcal{M}(a_1,\cdots,a_m)\mathcal{M}(b_1,\cdots, b_n) &= \mathcal{M}(a_1) \cdots \mathcal{M}(a_m) \mathcal{M}(b_1) \cdots \mathcal{M}(b_n) \\ &= \mathcal{M}(a_1,\cdots,a_m,b_1,\cdots,b_n).
  \end{align*}
  \end{proof}

  To extract an inequality of translation lengths from that of
  matrices, we introduce the component-wise comparison among matrices.
  \begin{DEF}
    For two matrices $A=(a_{ij})$ and $B=(b_{ij})$ having the same
    size, we write $A \succeq B$ if $a_{ij} \ge b_{ij}$ for all
    $i,j$, that is, every component of $A$ is greater than or equal to
    that of $B$. In particular, we will write it simply $A \succeq 0$ when
    every component of $A$ is nonnegative.
  \end{DEF}

  It is easily seen that for equisized square matrices $A,B,C,D$, and
  $A \succeq B \succeq 0$ and $C \succeq D \succeq 0$, we have
  $AC \succeq BD \succeq 0$. Hence by Lemma \ref{lem:concatM}, two
  positive integer sequences $(a_1,\cdots, a_n)$ and $(b_1,\cdots, b_n)$
  with $a_i \ge b_i$ for every $i$ induces the inequality
  $\mathcal{M}(a_1,\cdots,a_n) \succeq \mathcal{M}(b_1,\cdots,b_n)$. The same argument
  proves $\mathcal{M}(a_1,\cdots,a_n) \succeq 0$, as $\mathcal{M}(a_i) \succeq 0$ for every $i$.
  We summarize the observations in the following lemma.
\begin{LEM}
  \label{lem:MatrixIneq}
  For two positive integer sequences $(a_1,\cdots, a_n)$ and
  $(b_1,\cdots, b_n)$ with $a_i \ge b_i$ for every $i$, we have
  \[
    \mathcal{M}(a_1,\cdots,a_n)\succeq \mathcal{M}(b_1,\cdots,b_n)
    \succeq 0.
  \]
\end{LEM}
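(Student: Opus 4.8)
The goal is to prove Lemma \ref{lem:MatrixIneq}: for two positive integer sequences $(a_1,\cdots,a_n)$ and $(b_1,\cdots,b_n)$ with $a_i \ge b_i$ for every $i$, we have $\mathcal{M}(a_1,\cdots,a_n) \succeq \mathcal{M}(b_1,\cdots,b_n) \succeq 0$.

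The approach rests entirely on Lemma \ref{lem:concatM} together with the elementary multiplicative compatibility of the partial order $\succeq$. First I would record the single-letter base facts. For any positive integer $a$, $\mathcal{M}(a) = \begin{pmatrix} a & 1 \\ 1 & 0 \end{pmatrix} \succeq 0$, and moreover if $a \ge b \ge 1$ then $\mathcal{M}(a) \succeq \mathcal{M}(b) \succeq 0$, since the two matrices differ only in the top-left entry. These are immediate from inspection.

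The heart of the argument is the observation that componentwise domination of nonnegative matrices is preserved under multiplication. I would state this as the auxiliary claim: if $A \succeq B \succeq 0$ and $C \succeq D \succeq 0$ are equisized square matrices, then $AC \succeq BD \succeq 0$. The proof is a direct entrywise estimate: writing $(AC)_{ij} = \sum_k A_{ik}C_{kj}$, each summand satisfies $A_{ik}C_{kj} \ge B_{ik}D_{kj} \ge 0$ because all four factors are nonnegative and dominate their counterparts, so summing preserves the inequality. This is the only genuine content, and it is elementary; I do not anticipate any real obstacle here, only care in keeping the nonnegativity hypotheses in play so that the products of inequalities go through.

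Finally I would assemble the pieces. By Lemma \ref{lem:concatM}, each $\mathcal{M}(a_1,\cdots,a_n)$ factors as the product $\mathcal{M}(a_1)\cdots\mathcal{M}(a_n)$, and likewise for the $b$-sequence. Applying the base facts, we have $\mathcal{M}(a_i) \succeq \mathcal{M}(b_i) \succeq 0$ for each $i$ (using $a_i \ge b_i \ge 1$). An induction on $n$, with the auxiliary multiplicative claim supplying the inductive step, then yields
\[
  \mathcal{M}(a_1)\cdots\mathcal{M}(a_n) \succeq \mathcal{M}(b_1)\cdots\mathcal{M}(b_n) \succeq 0,
\]
which by Lemma \ref{lem:concatM} is exactly $\mathcal{M}(a_1,\cdots,a_n) \succeq \mathcal{M}(b_1,\cdots,b_n) \succeq 0$. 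I note that the paper already sketches this very reasoning in the paragraph preceding the lemma statement, so the proof is essentially a matter of writing that remark out cleanly; the main point worth emphasizing is that nonnegativity of all factors is what licenses multiplying the inequalities, and this holds precisely because the sequences consist of positive integers.
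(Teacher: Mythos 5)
Your proposal is correct and follows essentially the same route as the paper, which establishes the lemma in the paragraph immediately preceding its statement: factor via Lemma \ref{lem:concatM} into the single-letter matrices $\mathcal{M}(a_i)$, note $\mathcal{M}(a_i)\succeq\mathcal{M}(b_i)\succeq 0$, and invoke the fact that componentwise domination of nonnegative matrices is preserved under products. Your write-up merely makes the entrywise estimate and the induction explicit, which the paper leaves as ``easily seen.''
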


Shortly, we will see that how big is the \textit{trace} of an Anosov
map $f$ is closely related to the translation length of $f$. 
Since we are identifying $f$ as a hyperbolic element in
$\PSL(2,\mathbb{Z})$, we have to first clarify what it means by the \textit{trace}
of an element in $\PSL(2,\mathbb{Z})$. Indeed, due to the ambiguity of
signs in $\PSL(2,\mathbb{Z})$, the trace $\tr : A \in \PSL(2,\mathbb{Z}) \mapsto \tr A$ is not
well-defined.  Thus, we use the absolute value $|\tr A|$ to define the
trace on $\PSL(2,\mathbb{Z})$, which resolves the ambiguity. We write
$\tr A:=|\tr A|$ when no confusion can arise.

Note that $\det\mathcal{M}(a_1)=-1$, so we have
$\det \mathcal{M}(a_1,\cdots,a_k) = (-1)^k$ by Lemma \ref{lem:concatM}. Thus
whenever a sequence $\mathcal{S}$ has an \textit{even} length, $\mathcal{M}(S)$
represents an element of $\PSL(2,\mathbb{Z})$, whose translation length actually
decides the lower bound of the trace:

\begin{PROP}
  \label{prop:StandardIneq}
  Let $(a_1,\cdots,a_{2n})$ be a sequence of positive integers.
  Then
  \begin{align}
    \label{eqn:StandardIneq}
    \tr \mathcal{M}(a_1,\cdots, a_{2n} ) \ge \tr(\mathcal{M}(2)^l),
  \end{align}
  where $l$ is the translation length of $\mathcal{M}(a_1,\cdots,a_{2n})$ on $\mathcal{F}$.
\end{PROP}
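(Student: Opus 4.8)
The plan is to deduce the trace inequality from a component-wise domination of matrices, exactly in the spirit of Lemma \ref{lem:MatrixIneq}. Writing $E = \mathcal{M}(2) = \begin{pmatrix} 2 & 1 \\ 1 & 0\end{pmatrix}$, I would first record the elementary observation that if $A \succeq B \succeq 0$ are $2\times 2$ matrices then $\tr A \ge \tr B \ge 0$, since the trace sees only the (nonnegative) diagonal entries; combined with the fact that $\succeq$ is preserved under products of nonnegative matrices (already used in Lemma \ref{lem:MatrixIneq}), it then suffices to produce a factorization
\[
  \mathcal{M}(a_{i+1}, \dots, a_{2n}, a_1, \dots, a_i) = \mathcal{M}(\mathcal{S}_1)\,\mathcal{M}(\mathcal{S}_2)\cdots \mathcal{M}(\mathcal{S}_l)
\]
of a cyclic rotation of our word into exactly $l$ consecutive pieces $\mathcal{S}_1, \dots, \mathcal{S}_l$ with $\mathcal{M}(\mathcal{S}_j) \succeq E$ for every $j$. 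Indeed, by Lemma \ref{lem:concatM} the left-hand side is $\mathcal{M}$ of the rotated sequence, its trace equals $\tr \mathcal{M}(a_1,\dots,a_{2n})$ (the trace is a conjugacy invariant, and a cyclic rotation of the word is a conjugation), and the factorization would give $\mathcal{M}(\cdots) \succeq E^l = \mathcal{M}(2)^l$, whence the claimed bound.

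The heart of the matter is producing the $l$ pieces, and this is where the efficient geodesic enters. By Theorem \ref{thm:geodAxis} and Proposition \ref{prop:ConcatGeodesic}, the bi-infinite ladder stabilized by $f = \mathcal{M}(a_1,\dots,a_{2n})$ carries an efficient geodesic whose length over one fundamental domain is the translation length $l$. I would cut the cyclic type sequence at the pivot points visited by this geodesic, so that the $j$-th edge determines the $j$-th piece $\mathcal{S}_j$, namely the run of consecutive coefficients consumed in passing from one visited pivot to the next; by construction there are exactly $l$ such pieces, and by Lemma \ref{lem:concatM} their matrices multiply to the rotated word. The key structural input is the efficient moving condition: a transverse move is taken precisely at a coefficient $\ge 2$, while pass moves are exactly what absorb the coefficient-$1$ blocks, so no piece $\mathcal{S}_j$ can consist of a single $1$.

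It then remains to check the purely computational claim that $\mathcal{M}(\mathcal{S}) \succeq E$ for every finite positive sequence $\mathcal{S}$ that is not the single symbol $(1)$. I would verify this from the explicit shape $\mathcal{M}(\mathcal{S}) = \begin{pmatrix} p(\mathcal{S}) & r(\mathcal{S}) \\ q(\mathcal{S}) & s(\mathcal{S})\end{pmatrix}$: a single block $(a)$ with $a \ge 2$ gives $\begin{pmatrix} a & 1 \\ 1 & 0\end{pmatrix} \succeq E$, and for any sequence of length at least two one has $p(\mathcal{S}) = a_1 p' + q' \ge 2$, together with $q(\mathcal{S}) \ge 1$, $r(\mathcal{S}) \ge 1$, and $s(\mathcal{S}) \ge 1$, so again $\mathcal{M}(\mathcal{S}) \succeq E$; the only excluded case, $\mathcal{M}(1) = \begin{pmatrix} 1 & 1 \\ 1 & 0\end{pmatrix}$, is precisely the lone-$1$ piece ruled out above.

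I expect the main obstacle to be the bookkeeping in the second paragraph: making precise which consecutive coefficients are consumed by each geodesic edge, so that the pieces both concatenate to the full rotated word (via Lemma \ref{lem:concatM}) and number exactly $l$, while correctly accounting for the reluctant move at the end of a fundamental domain and for the choice of which of the two same-side efficient geodesics of Proposition \ref{prop:ConcatGeodesic} one uses. Once this decomposition is pinned down, the inequality follows formally from the component-wise estimates above.
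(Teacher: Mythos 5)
Your argument is correct, but it takes a genuinely different route from the paper's. The paper first \emph{downsizes} the sequence: it replaces $(a_1,\dots,a_{2n})$ by a componentwise smaller sequence $(b_1,\dots,b_{2n})$ with entries in $\{1,2\}$, chosen (via a careful treatment of odd blocks of $1$'s) so that the efficient geodesic, and hence the translation length, is unchanged; it then proves $\mathcal{M}(b_1,\dots,b_{2n})\succeq \mathcal{M}(2)^l$ by pairing consecutive $1$'s using $\mathcal{M}(1)^2\succeq\mathcal{M}(2)$ and $\mathcal{M}(2)\mathcal{M}(1)\succeq\mathcal{M}(2)$, with a case split on the parity of the number of $1$'s. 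You instead leave the sequence alone and \emph{factor} it: one block of consecutive coefficients per edge of the axis, giving exactly $l$ factors each $\succeq\mathcal{M}(2)$. Your decomposition is cleaner and makes the appearance of $l$ transparent (one factor per geodesic edge), and it absorbs the paper's base case $a_1=\dots=a_{2n}=1$ uniformly, since then every piece is $(1,1)$ and $\mathcal{M}(1,1)=\left(\begin{smallmatrix}2&1\\1&1\end{smallmatrix}\right)\succeq\mathcal{M}(2)$. The bookkeeping you worry about does close up, and the cleanest way to pin it down is to work directly with the $f$-invariant bi-infinite efficient geodesic $\mathcal{P}$ of Theorem \ref{thm:geodAxis} rather than with a finite subladder: cut at a visited pivot $Q$ and at $f(Q)$, which lies exactly $l$ edges and exactly one period ($2n$ coefficients) further along. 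Then every move of $\mathcal{P}$ is governed purely by the coefficient ahead (there is no final point, hence no reluctant move), so a $t$-edge consumes a single coefficient $\ge 2$ and a $p$-edge consumes a pair $(1,a)$ with $\mathcal{M}(1,a)=\left(\begin{smallmatrix}a+1&1\\a&1\end{smallmatrix}\right)\succeq\mathcal{M}(2)$; the pieces concatenate to the rotated word by Lemma \ref{lem:concatM}, which is conjugate to $\mathcal{M}(a_1,\dots,a_{2n})$ and so has the same trace and translation length, and the inequality \eqref{eqn:StandardIneq} follows from Lemma \ref{lem:MatrixIneq} by comparing diagonal entries. What the paper's version buys in exchange for its extra case analysis is the explicit intermediate object $\mathcal{M}(b_1,\dots,b_{2n})$, but for the stated inequality your factorization is the more economical proof.
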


\begin{proof}
  We first consider the case $a_1=a_2=\cdots=a_{2n}=1$. Since
  $\mathcal{M}(1,\cdots,1)$ has the invariant geodesic of the form
  $p \cdots p$ in $\mathcal{F}$, the translation length of
  $\mathcal{M}(\underbrace{1,\cdots,1}_{2n})$ is n. By Lemma \ref{lem:concatM},
  \vspace{-1em} 
  \[
    \mathcal{M}(a_1,\cdots, a_{2n})=\mathcal{M}(1)^{2n}
    =\begin{pmatrix} 1 & 1 \\ 1 & 0 \end{pmatrix}^{2n}
    =\begin{pmatrix} 2 & 1 \\ 1 & 1 \end{pmatrix}^{n}
    \succeq \begin{pmatrix} 2 & 1 \\ 1 & 0 \end{pmatrix}^{n}
    = M(2)^n.
\]
Applying the trace function on both sides, we have the inequality \eqref{eqn:StandardIneq}.

Now suppose $a_i>1$ for some $i$.
Note that $\mathcal{M}(a_i,\cdots,a_{2n}, a_1,\cdots, a_{i-1})$ is conjugate
to $\mathcal{M}(a_1,\cdots,a_{2n})$ for every $i$ in $\PSL(2,\mathbb{Z})$ by Lemma
\ref{lem:concatM}. Since trace and translation length are invariant under
conjugation, now we may assume $a_1 >1$.

The rest of the proof will be divided into two steps.  First, we will
\textit{downsize} $\mathcal{M}(a_1,\cdots,a_{2n})$ by constructing a
smaller sequence $(b_1,\cdots,b_{2n})$ out of $(a_1,\cdots,a_{2n})$
while $\mathcal{M}(b_1,\cdots,b_{2n})$ has the same translation
length. Then we will show $\mathcal{M}(b_1,\cdots,b_{2n})$ satisfies
the inequality \eqref{eqn:StandardIneq} with $a_i$'s replaced by
$b_i$'s.

\textbf{Step 1(Construction of Downsized Sequence.)}
Based on the sequence $(a_1,\cdots,a_{2n})$, we construct a sequence $(b_1,\cdots, b_{2n})$ as follows:
\begin{enumerate}[(i)]
\item Set $b_i=1$ whenever $a_i=1$.
\item For each odd block of $1$'s enclosed by non-$1$'s in $\{a_i\}$,
  set the corresponding element of $\{b_i\}$ in the position of non-1
  followed by the odd 1-block to be 1. More precisely, whenever there exist
  $i,k$ such that $a_{i}>1, a_{i+1}=a_{i+2}=\cdots =a_{i+2k-1}=1$, and
  $a_{i+2k}>1$, then let $b_{i+2k}=1$.
\item Set all undetermined $b_i$ to $2$.
\end{enumerate}
\begin{figure}[ht]\centering
  \includegraphics[width = \textwidth]{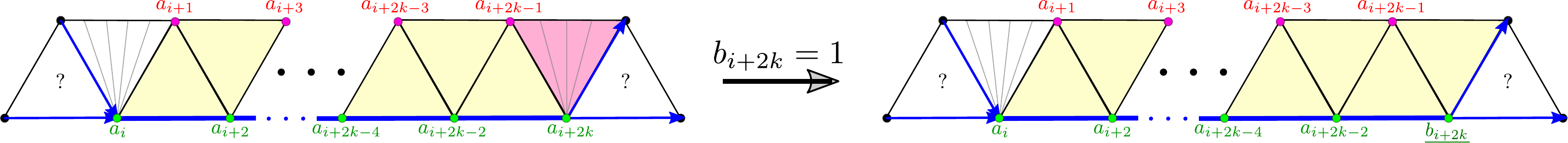}
  \caption{Description of Process $(ii)$. Note the efficient
    geodesic(Blue lines) does not change under the process. In both
    cases, the efficient geodesic never visits red vertices, but
    always does green ones.}
\label{fig:downsizing}
\end{figure}

Recall that the efficient geodesic in a ladder solely depends on the type
of the ladder.  Thus, obviously the process (i) has no effect on
deciding efficient geodesics in the ladder.  For (iii), note that the
move $t$ is done whenever the move $p$ visits $\ge 2$ vertices, so
changing a non-1 coefficient $a_i \ge 2$ into $b_i=2$ does not change
the efficient geodesic as well. Therefore, what is left to show is
that the transition of non-1 coefficient $a_{i+2k}$ into 1 in Process (ii)
does not affect deciding the efficient geodesic in the ladder. Suppose
$a_i>1$, $a_{i+1}=\cdots=a_{i+2k-1}=1$, and $a_{i+2k}>1$. Since
$a_i>1$, the efficient geodesic must visit the pivot point
corresponding to $a_i$, followed by $k$-consecutive $p$ moves by which
it avoids the pivot points associated with
$a_{i+1},a_{i+3},\cdots, a_{i+2k-1}$ until it reaches the point
associated with $a_{i+2k}$. In particular, the part of efficient
geodesic decided by $a_i,\cdots,a_{i+2k}$ is completely irrelevant
with the value of $a_{i+2k}$. See Figure
\ref{fig:downsizing}. Therefore we may set $b_{i+2k}=1$ without
changing the efficient geodesic of $\mathcal{M}(a_1,\cdots,a_n)$.

All in all, the three processes $(i),(ii),(iii)$ preserve the efficient geodesic
in the ladder, so $\mathcal{M}(a_1,\cdots, a_n)$ and
$\mathcal{M}(b_1,\cdots, b_n)$ have the same translation length on
$\mathcal{F}$, satisfying $a_i \ge b_i$, by construction.  By Lemma
\ref{lem:MatrixIneq}, we get
\begin{align}
  \label{eqn:CompIneq}
\tr \mathcal{M}(a_1,\cdots, a_n)\geq \tr \mathcal{M}(b_1,\cdots, b_n).
\end{align}

\textbf{Step 2 (Show Inequality with $\mathbf{b_i}$'s.)} We show the
similar inequality of \eqref{eqn:StandardIneq} for $b_i$ also holds:
  \begin{align}
    \label{eqn:VariantIneq}
    \tr \mathcal{M}(b_1,\cdots, b_{2n} ) \ge \tr(\mathcal{M}(2)^l).
  \end{align}
  By construction $(ii)$, any odd 1-blocks in $\{a_i\}$ are all
  transformed to even 1-blocks in $\{b_i\}$, except for those
  \textit{not} enclosed by non-1 blocks. However, as we have assumed
  $a_1>1$, an odd block of $1$ in $b_i$, if exists, can only appear at
  the end of the sequence. Thus, if the number of $1$ in $b_i$ is
  $2k$, then it forces every $1$-block in $\{b_i\}$ to be
  even. Therefore the efficient geodesic will contain $2n-2k$ of $t$-moves
and $k$ of $p$-moves, so the translation length of
  $\mathcal{M}(b_1,\cdots,b_{2n})$ must be $2n-k$. On the other hand,
  as $1$'s are clustered in even blocks, every $\mathcal{M}(1)$ in
  $\mathcal{M}(b_1)\mathcal{M}(b_2)\cdots \mathcal{M}(b_{2n})$ can be
  paired and reduced into $\mathcal{M}(2)$, for
  $\mathcal{M}(1)^2 \succeq \mathcal{M}(2)$.  Then we have
  \[
    \mathcal{M}(b_1)\mathcal{M}(b_2)\cdots \mathcal{M}(b_{2n})\succeq
    \mathcal{M}(2)^{2n-k},
  \]
  which establishes the inequality \eqref{eqn:VariantIneq}.

  If the number of $1$ in $b_i$ is $2k-1$, then it means there is an
  odd block of $1$'s at the end of $\{b_i\}$, and in particular
  $b_{2n}=1$. To exploit as many as $1$'s to minimize the length of
  the efficient geodesic, we have to start at $\infty$, which in turn
  yields a shorter geodesic with and $2n-2k$ of $t$-moves and $k$ of
  $p$-moves, compared to the geodesic starting from $0$, which
  consists of $2n-2k+2$ of $t$-moves and $k-1$ of $p$-moves. Thus, the
  translation length of $\mathcal{M}(b_1,\cdots,b_{2n})$ is
  $2n-k$. Since $\mathcal{M}(2)\mathcal{M}(1)\succeq \mathcal{M}(2)$,
  we can ignore the first $1$ appearing in the odd block of $1$'s. For the other
  $1$'s, we can pair them as before, which will give
  \[
    \mathcal{M}(b_1)\mathcal{M}(b_2)\cdots \mathcal{M}(b_{2n})\succeq
    \mathcal{M}(2)^{2n-k},
  \]
  again showing the inequality \eqref{eqn:VariantIneq}.

Finally, combining \eqref{eqn:CompIneq} with \eqref{eqn:VariantIneq}, we conclude
\[
  \tr \mathcal{M}(a_1,\cdots,a_{2n}) \ge \tr
  \mathcal{M}(b_1,\cdots,b_{2n}) \ge \tr (\mathcal{M}(2)^l).
\]
\end{proof}

Relating an arbitrary Anosov map to one with the form $\mathcal{M}(a_1,\cdots,a_{2n})$,
we can extend Proposition \ref{prop:StandardIneq} to Anosov elements.

\begin{COR}
	\label{cor:GeneralIneq}
	Let $f$ be an Anosov map, identified with a hyperbolic element in $\PSL(2,\mathbb{Z})$. Then
        \[
          \tr(f) \geq \tr( \mathcal{M}(2)^l),
      \]
      where $l$ is the translation length of $f$ on $\mathcal{F}$.
\end{COR}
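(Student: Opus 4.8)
The plan is to reduce the statement to Proposition \ref{prop:StandardIneq} by replacing $f$ with a conjugate that is literally of the form $\mathcal{M}(a_1,\cdots,a_{2n})$, and then to invoke the inequality already established for such matrices. Since both the trace (defined as $|\tr|$) and the translation length on $\mathcal{F}$ are invariant under conjugation in $\PSL(2,\mathbb{Z})$, I may freely conjugate $f$ without changing either side of the desired inequality.

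First I would apply Proposition \ref{prop:stdconj} to conjugate $f$ to a hyperbolic element $g$ whose invariant ladder $\mathcal{L}$ is standard, i.e.\ contains the edge $e_0 = \overline{0,\infty}$ as a rung. Then $g(e_0)$ is another rung of $\mathcal{L}$, and I would take $\mathcal{S} = (a_1,\cdots,a_m)$ to be the type of the finite subladder $\widetilde{\mathcal{L}}$ bounded by $e_0$ and $g(e_0)$. By the construction of $\mathcal{M}$, the matrix $\mathcal{M}(\mathcal{S})$ is the standard-ladder map sending $e_0$ to the far edge of $\mathcal{L}(\mathcal{S})$, which is precisely $g(e_0)$.

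The crucial step, and the one I expect to be the main obstacle, is to verify that $m$ is \emph{even} and that $g$ coincides with $\mathcal{M}(\mathcal{S})$ on the nose. For parity: consecutive pivot points of a ladder alternate between its two sides, so a side-preserving translation must shift the pivots by an even amount. Since $g$ preserves each side of $\mathcal{L}$ by Proposition \ref{prop:BiInfiniteLadder}, the number $m$ of pivots between $e_0$ and $g(e_0)$ is even, say $m = 2n$; this is the same evenness phenomenon underlying the fundamental domain of Theorem \ref{thm:periodicBiInfiniteLadder}. Consequently $\det \mathcal{M}(\mathcal{S}) = (-1)^{2n} = 1$, so $\mathcal{M}(\mathcal{S})$ represents an element of $\PSL(2,\mathbb{Z})$. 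Both $g$ and $\mathcal{M}(\mathcal{S})$ are orientation-preserving and send $e_0$ to the rung $g(e_0)$; using side-preservation to match which endpoint of $e_0$ maps to which endpoint of $g(e_0)$, they agree on the \emph{ordered} pair of endpoints $(\infty,0)$. Since the stabilizer in $\PSL(2,\mathbb{Z})$ of an ordered Farey edge is trivial, this forces $g = \mathcal{M}(a_1,\cdots,a_{2n})$. Should $\widetilde{\mathcal{L}}$ ever present itself with odd length, I would instead calibrate it via Algorithm \ref{alg:calLadder} to an even-length subladder realizing the same $g$, but the side-preservation argument shows this is in fact automatic.

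Finally, with $g = \mathcal{M}(a_1,\cdots,a_{2n})$, Proposition \ref{prop:StandardIneq} yields $\tr \mathcal{M}(a_1,\cdots,a_{2n}) \ge \tr(\mathcal{M}(2)^{l})$, where $l$ is the translation length of $\mathcal{M}(a_1,\cdots,a_{2n}) = g$ on $\mathcal{F}$. Because $\tr(f) = \tr(g)$ and $l$ is also the translation length of $f$, this reads exactly $\tr(f) \ge \tr(\mathcal{M}(2)^{l})$, which is the claim.
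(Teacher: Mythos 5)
Your overall strategy is the same as the paper's: conjugate $f$ into standard position, identify the result with $\mathcal{M}(a_1,\ldots,a_{2n})$, and quote Proposition \ref{prop:StandardIneq}. But there is a genuine gap in the identification step. A matrix $\mathcal{M}(\mathcal{S})$ built from a \emph{positive} integer sequence has nonnegative entries and sends $e_0=\overline{0,\infty}$ to an edge with \emph{positive} rational endpoints, namely the far edge of the standard ladder $\mathcal{L}(\mathcal{S})$ lying on the positive side of $e_0$. A standard invariant ladder of $g$ straddles $e_0$, with one end accumulating at the positive fixed point and the other at the negative one, and $g$ translates toward one of these two ends; if it translates toward the negative end, then $g(e_0)$ has negative endpoints and is not the image of $e_0$ under any $\mathcal{M}(\mathcal{S})$, so your assertion that the far edge of $\mathcal{L}(\mathcal{S})$ ``is precisely $g(e_0)$'' fails outright, and no matching of ordered endpoints rescues it. The paper closes exactly this gap by observing that one of $g$ or $g^{-1}$ sends $0$ to a positive rational, and that $\tr(g)=\tr(g^{-1})$ and $l_{\mathcal{C}}(g)=l_{\mathcal{C}}(g^{-1})$, so it suffices to prove the bound for whichever of the two has the form $\mathcal{M}(a_1,\ldots,a_{2n})$. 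You need this (or an equivalent) reduction.

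A second, smaller soft spot: your parity argument ``side-preservation forces an even shift'' applies to \emph{pivot points}, but Proposition \ref{prop:stdconj} as stated only guarantees that $\overline{0,\infty}$ is a rung of the invariant ladder, not that $0$ and $\infty$ are both pivot points (a rung interior to a block of length at least $2$ has a non-pivot endpoint). When $\infty$, say, lies in the interior of a block, the subladder bounded by $e_0$ and $g(e_0)$ genuinely has odd length, so the evenness is not ``automatic.'' The paper instead uses a strengthening of Proposition \ref{prop:stdconj} — conjugate by a spine rung, so that $0$ and $\infty$ become pivot points — after which your side-preservation argument, or equivalently the determinant computation $\det\mathcal{M}(a_1,\ldots,a_m)=(-1)^m$, does give evenness. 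Your fallback via Algorithm \ref{alg:calLadder} could be made to work, but calibration expresses a \emph{conjugate} of $g$ as $\mathcal{M}$ of the calibrated type rather than $g$ itself, so it requires the same invariance-under-conjugation bookkeeping you invoked at the start rather than being dismissible as unnecessary.
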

\begin{proof}
  A slight change in the proof of Proposition \ref{prop:stdconj}
  actually shows that $f$ is conjugate to one associated with a standard
  ladder in which \textit{both $0$ and $\infty$ are pivot points.}
  Since conjugate Anosov maps have the same traces and translation lengths on
  $\mathcal{F}$, we may assume the associated ladder of $f$
  has $\overline{0,\infty}$ as its rung with $0, \infty$ being pivot
  points. As $\det f=1$, we have $\tr f=\tr(f^{-1})$. Also note that
  $f$ and $f^{-1}$ have the same translation length on $\mathcal{F}$.
  Since either one of $f$ or $f^{-1}$ will send $0$ to a positive
  rational number, it is possible to write either one as
  $\mathcal{M}(a_1,\cdots, a_{2n})$ for some positive integers
  $a_1,\cdots,a_{2n}$. Finally, by Proposition \ref{prop:StandardIneq}, we have
  \[
    \tr(f)=\tr(f^{-1}) = \tr \mathcal{M}(a_1,\cdots,a_{2n}) \geq \tr(\mathcal{M}(2)^l).
  \]
\end{proof}

Finally, we establish our effective bound of the ratio of Teichm{\" u}ller to curve graph translation lengths.

\begin{THM}
  \label{thm:boundsOfRatio}
  Let $f$ be an Anosov map. Then the ratio $\frac{l_{\mathcal{T}}(f)}{l_{\mathcal{C}}(f)}$ is
  \begin{enumerate}[(1)]
  \item bounded below $\log(1+\sqrt{2}) \simeq 0.8814$, and the
    minimum is achieved when $f=\mathcal{M}(2,2)^n$ for $n \ge 1$.
  \item unbounded above. More precisely, there exists a sequence of
    Anosov maps $\{f_n\}$ such that
    \[ \lim_{n \to \infty }\frac{l_{\mathcal{T}}(f_n)}{l_{\mathcal{C}}(f_n)} = \infty.
    \]
  \end{enumerate}
\end{THM}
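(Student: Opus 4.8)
The plan is to convert the trace inequality of Corollary \ref{cor:GeneralIneq} into a comparison between the Teichm\"uller and curve-graph lengths. Recall that if $\lambda>1$ is the leading eigenvalue of $f$, then $l_{\mathcal{T}}(f)=\log\lambda$ and $\lambda+\lambda^{-1}=\tr f$, so $\lambda=g(\tr f)$ where $g(t)=\tfrac12\bigl(t+\sqrt{t^2-4}\bigr)$ is increasing on $[2,\infty)$. Writing $\mu=1+\sqrt2$ for the leading eigenvalue of $\mathcal{M}(2)=\left(\begin{smallmatrix}2&1\\1&0\end{smallmatrix}\right)$, the other eigenvalue is $1-\sqrt2=-\mu^{-1}$, so $\tr\mathcal{M}(2)^k=\mu^k+(-1)^k\mu^{-k}$. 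The point is that this trace behaves cleanly only for \emph{even} $k$: for even $k$ one checks directly that $g(\mu^k+\mu^{-k})=\mu^k$, whereas for odd $k$ the matrix $\mathcal{M}(2)^k$ has determinant $-1$ and $g(\mu^k-\mu^{-k})<\mu^k$. This parity mismatch is the one genuine obstacle, and I would dispose of it by passing to $f^2$.

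For part (1), set $l=l_{\mathcal{C}}(f)$. First I would apply Corollary \ref{cor:GeneralIneq} to $f^2$, which is again Anosov with $l_{\mathcal{C}}(f^2)=2l$ (stable translation length is homogeneous under powers, and concretely $f^2$ translates by twice the amount along the same geodesic axis). This gives $\tr(f^2)\ge\tr\mathcal{M}(2)^{2l}=\mu^{2l}+\mu^{-2l}$, and since the exponent $2l$ is even, applying the increasing function $g$ (noting that $\lambda^2=\lambda_{f^2}=g(\tr f^2)$) yields
\[
  \lambda^2=g(\tr(f^2))\ge g(\mu^{2l}+\mu^{-2l})=\mu^{2l}.
\]
Taking square roots gives $\lambda\ge\mu^l$, hence $l_{\mathcal{T}}(f)=\log\lambda\ge l\,\log(1+\sqrt2)=l_{\mathcal{C}}(f)\,\log(1+\sqrt2)$, the desired lower bound. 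To see that it is sharp, I would compute for $f=\mathcal{M}(2,2)^n=\mathcal{M}(\underbrace{2,\dots,2}_{2n})$ (Lemma \ref{lem:concatM}): its associated ladder has type $(2,\dots,2)$, whose efficient geodesic is $t^{2n}$, so $l_{\mathcal{C}}(f)=2n$, while its leading eigenvalue is $\mu^{2n}$, so $l_{\mathcal{T}}(f)=2n\log(1+\sqrt2)$; the ratio is exactly $\log(1+\sqrt2)$.

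For part (2), it suffices to keep $l_{\mathcal{C}}$ bounded while $l_{\mathcal{T}}\to\infty$. I would take $f_n=\mathcal{M}(n,n)=\left(\begin{smallmatrix}n^2+1&n\\n&1\end{smallmatrix}\right)$ for $n\ge2$. Its ladder has type $(n,n)$ with both coefficients at least $2$, so the efficient geodesic is $tt$ and $l_{\mathcal{C}}(f_n)=2$ for every $n$. On the other hand $\tr f_n=n^2+2\to\infty$, so its leading eigenvalue $\lambda_n=g(n^2+2)\to\infty$ and $l_{\mathcal{T}}(f_n)=\log\lambda_n\to\infty$. Hence $\frac{l_{\mathcal{T}}(f_n)}{l_{\mathcal{C}}(f_n)}=\tfrac12\log\lambda_n\to\infty$, proving the ratio is unbounded above.

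The main obstacle, as flagged, is the determinant/parity issue: the bound from Corollary \ref{cor:GeneralIneq} is phrased through $\mathcal{M}(2)^l$, which lies in $\PSL(2,\mathbb{Z})$ only when $l$ is even, and for odd $l$ the direct conversion through $g$ loses too much to reach $\log(1+\sqrt2)$. Squaring $f$ is the clean fix, legitimate precisely because $l_{\mathcal{T}}$ and $l_{\mathcal{C}}$ scale by the same factor under powers, leaving the ratio---and hence the statement---unchanged.
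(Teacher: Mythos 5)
Your proof is correct and follows essentially the same route as the paper: for part (1) both arguments pass to $f^2$ to dodge the odd-power/determinant issue with $\mathcal{M}(2)^l$, apply Corollary \ref{cor:GeneralIneq}, and convert the trace inequality into a Teichm\"uller-length inequality via monotonicity (you do this explicitly with $g(t)=\tfrac12(t+\sqrt{t^2-4})$, the paper via the increasing function $x\mapsto x+x^{-1}$). For part (2) you use the family $\mathcal{M}(n,n)$ with $l_{\mathcal{C}}=2$ where the paper uses the $(1,n)$-type matrices with $l_{\mathcal{C}}=1$, but this is only a different choice of witness and both work.
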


\begin{proof}
  \begin{enumerate}[(1)]
		
  \item Note that for every positive integer $n$, we have
    \[
      l_{\mathcal{T}}(f^n)=nl_{\mathcal{T}}(f), \quad l_{\mathcal{C}}(f^n)=nl_{\mathcal{C}}(f), \quad \frac{l_{\mathcal{T}}(f^n)}{l_{\mathcal{C}}(f^n)}=\frac{l_{\mathcal{T}}(f)}{l_{\mathcal{C}}(f)}.
    \]
    Let $\lambda>1$ be the dilatation of $f$, which is identical to
    the largest eigenvalue of $f$. Thus
    $l_{\mathcal{T}}(f) = \log \lambda$, and
    $\tr f = \lambda + \frac{1}{\lambda}$. As both $x \mapsto \log x$ and $x \mapsto x+\frac{1}{x}$ are increasing functions for $x>1$,
    \[
      \tr(f) \geq \tr(g) \qquad \text{if and only if} \qquad
      l_{\mathcal{T}}(f) \geq l_{\mathcal{T}}(g).
    \]

    Now write $l_{\mathcal{C}}(f)=l$. Then
    $l_{\mathcal{C}}(f^2)=2l=l_{\mathcal{C}}(\mathcal{M}(2)^{2l})$ and
    $\tr f^2 \ge \tr (\mathcal{M}(2)^{2l})$ by Corollary \ref{cor:GeneralIneq}.
    Hence,
    \[
      \frac{l_{\mathcal{T}}(f)}{l_{\mathcal{C}}(f)}=\frac{l_{\mathcal{T}}(f^2)}{l_{\mathcal{C}}(f^2)}\geq
      \frac{l_{\mathcal{T}}(\mathcal{M}(2)^{2l})}{l_{\mathcal{C}}(\mathcal{M}(2)^{2l})}
      =\frac{l_{\mathcal{T}}(\mathcal{M}(2)^{2})}{l_{\mathcal{C}}(\mathcal{M}(2)^{2})}=\log(1+\sqrt{2}).
    \]
    Therefore, we established the lower bound of
    $\frac{l_{\mathcal{T}}(f)}{l_{\mathcal{C}}(f)}$ as
    $\log(1+\sqrt{2})$, which is realized by
    $\mathcal{M}(2,2)^n= \begin{pmatrix} 5 & 2 \\ 2 & 1 \end{pmatrix}^n$ for every positive integer $n$.
		
  \item For the upper bound, we have $f_n =
    \begin{pmatrix}
      n+1 & 1 \\ n & 1
    \end{pmatrix}$ whose dilatation falls in the interval $(n+1,n+2)$,
    for the trace is the sum of the dilatation and its reciprocal. Also,
    the translation length of $f_n$ is 1 because its invariant ladder
    is of type $(1,n)$. Thus we established a sequence of Anosov maps
    $\{f_n\}$ such that
    \[
      \frac{l_{\mathcal{T}}(f_n)}{l_{\mathcal{C}}(f_n)} > n+1,
    \]
    for each $n \ge 1$. Therefore,
    $\lim_{n \to
      \infty}\frac{l_{\mathcal{T}}(f_n)}{l_{\mathcal{C}}(f_n)} =
    \infty$, and so ratio is unbounded above.
  \end{enumerate}
\end{proof}

\subsection{Evenly Spread Length Spectrum of Translation Lengths}

Motivated from \cite[Theorem 9]{baik2017typical}, we now present how
typical is the set $m_f = \#\{[f'] \in \PSL(2,\mathbb{Z}):
  l_{\mathcal{C}}(f')=l_{\mathcal{C}}(f)\}$. Informally, the result can
be phrased as ``the spectrum of translation length on $\mathcal{F}$ is
\textit{evenly spread}.''

\begin{THM}
  \label{thm:typical}
  For any positive integer $k$, we have
  \begin{align}
    \label{eqn:typical}
    \lim_{R \to \infty} \frac{\#\{[f] \in cl(\PSL(2,\mathbb{Z})): \lambda(f)<R,\ \ m_f \ge k\}}{\#\{[f] \in cl(\PSL(2,\mathbb{Z})): \lambda(f)<R\}} \to 1,
  \end{align}
  where $cl(\PSL(2,\mathbb{Z}))$ is the set of conjugacy classes in
  $\PSL(2,\mathbb{Z})$, and $\lambda(f)$ is the dilatation of
  $f \in \PSL(2,\mathbb{Z})$.
\end{THM}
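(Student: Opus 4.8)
The plan is to separate the statement into its soft combinatorial core and the quantitative counting that the phrase ``evenly spread'' really refers to. First observe that the literal assertion \eqref{eqn:typical} is forced by two facts already in hand. By Theorem~\ref{thm:geodAxis} every $l_{\mathcal{C}}(f)$ is a \emph{positive integer}, and for each positive integer $l$ there are infinitely many conjugacy classes realizing it: taking the family $f_n=\begin{pmatrix}n+1&1\\ n&1\end{pmatrix}$ from the proof of Theorem~\ref{thm:boundsOfRatio}(2), which has $l_{\mathcal{C}}(f_n)=1$, the powers $f_n^{\,l}$ satisfy $l_{\mathcal{C}}(f_n^{\,l})=l$ for all $n$, and they are pairwise non-conjugate since $\lambda(f_n^{\,l})=\lambda(f_n)^{l}$ are distinct. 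Hence $m_f=\infty\ge k$ for \emph{every} $f$, so the numerator equals the denominator and the ratio in \eqref{eqn:typical} is identically $1$.

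The content behind the slogan, however, is the quantitative version in which a class is required to share its translation length with $\ge k$ \emph{other} classes of comparable dilatation; I would prove this by a pigeonhole argument resting on a ``few values, many classes'' dichotomy. For the ``few values'' half, combine integrality with part (1) of Theorem~\ref{thm:boundsOfRatio}: since $l_{\mathcal{C}}(f)\le l_{\mathcal{T}}(f)/\log(1+\sqrt2)=\log\lambda(f)/\log(1+\sqrt2)$, as $[f]$ ranges over classes with $\lambda(f)<R$ the integer $l_{\mathcal{C}}(f)$ takes at most $\lfloor \log R/\log(1+\sqrt2)\rfloor=O(\log R)$ distinct values. For the ``many classes'' half I would invoke the growth of $N(R):=\#\{[f]:\lambda(f)<R\}$; the classes that fail the condition are precisely those whose value is hit by fewer than $k$ classes inside the ball, and there are at most $k\cdot O(\log R)$ of them, so the bad proportion is $O\!\big(k(\log R)/N(R)\big)$.

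The crux is therefore the asymptotics of $N(R)$, and this is the step I expect to require the most care. The clean input is the prime geodesic theorem for the modular surface $\hp/\PSL(2,\mathbb{Z})$: a hyperbolic class of dilatation $\lambda$ is a closed geodesic of length $2\log\lambda$, so $N(R)\sim R^2/(2\log R)$, which grows far faster than $(\log R)^2$ and makes the bad proportion vanish as $R\to\infty$. One must check that the ordering by $\lambda$ (equivalently by $\tr$, via $\tr f=\lambda+\lambda^{-1}$) is matched correctly to the geodesic-length ordering and that including non-primitive classes only enlarges the count. If a self-contained estimate is preferred, it suffices to prove the crude lower bound $N(R)\gg R^{2-\varepsilon}$ by counting even-length ladder types $(a_1,\dots,a_{2n})$ with $\tr\mathcal{M}(a_1,\dots,a_{2n})<R$ using Lemma~\ref{lem:concatM}; any bound beating $(\log R)^2$ already defeats the $O(k\log R)$ numerator and yields \eqref{eqn:typical}.
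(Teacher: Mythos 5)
Your argument for the quantitative statement is correct and is essentially the paper's proof: both rest on the same dichotomy --- by integrality of $l_{\mathcal{C}}$ and Theorem~\ref{thm:boundsOfRatio}(1), the translation length takes only $O(\log R)$ distinct values on classes with $\lambda(f)<R$, so at most $(k-1)\cdot O(\log R)$ classes can fail to share their value with $k$ others inside the ball, and this count is negligible once $N(R)$ grows superlogarithmically. The one genuine difference is the input for the growth of $N(R)$: the paper obtains $N(R)\in w(\log R)$ from Chowla's class-number bound (Lemma~\ref{lem:conjugacyclasses}), namely $H(t)>|t|^{1-\theta}$ applied to a single trace value near $R$, whereas you invoke the prime geodesic theorem $N(R)\sim R^2/(2\log R)$ or, alternatively, a hand-made $R^{2-\varepsilon}$ count of ladder types. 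Both overshoot what is needed; Chowla's bound is the lighter tool here since only $N(R)/\log R\to\infty$ is used, while the prime geodesic theorem carries the extra (routine but real) bookkeeping about matching the length ordering and handling non-primitive classes that you yourself flag as the delicate step. Finally, your preliminary observation that the literal statement is vacuous --- $m_f$ as defined just before the theorem counts \emph{all} conjugacy classes with the same translation length, and that count is infinite for every $f$ by your $f_n^{\,l}$ family --- is a fair reading; the paper's proof silently reinterprets $m_f$ as the count restricted to $\lambda(f')<R$ (this is what the condition $N_i(R)\ge k$ amounts to in its computation), which is precisely the quantitative version you then prove.
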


To prove this, we need the following asymptotic result on the number of conjugacy classes of $\SL(2,\mathbb{Z})$. For two functions $f,g : \mathbb{Z}^+ \to \mathbb{R}_{\ge 0}$, denote by $f \in w(g)$ if $\lim_{t \to \infty}\frac{f(t)}{g(t)}=\infty$.

\begin{LEM}[{\cite[Theorem 2]{chowla1980number}}]
  \label{lem:conjugacyclasses}
  Denote by $H(t)$ the number of conjugacy classes in $\SL(2,\mathbb{Z})$ with fixed trace $t$.
  For any $\theta > 0$, there exists $T(\theta)>0$ such that whenever $|t| > T(\theta)$, then $H(t) > |t|^{1-\theta}.$ In particular, $H \in w(\log).$  
\end{LEM}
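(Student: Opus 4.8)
The plan is to reduce the count $H(t)$ to a class number of binary quadratic forms and then apply the analytic class number formula together with Siegel's lower bound for Dirichlet $L$-values; this is precisely the computation behind \cite[Theorem 2]{chowla1980number}, which one may alternatively invoke directly. By the symmetry $A\mapsto -A$ we have $H(t)=H(-t)$, so assume $t>2$. For a hyperbolic $A=\begin{pmatrix} a & b \\ c & d\end{pmatrix}\in\SL(2,\mathbb{Z})$ with $\tr A=t$, the characteristic polynomial $\lambda^2-t\lambda+1$ has roots in $\mathbb{Q}(\sqrt{t^2-4})$, and attaching to $A$ the integral binary quadratic form $Q_A(x,y)=cx^2+(d-a)xy-by^2$ of discriminant $(d-a)^2+4bc=t^2-4$ realizes the classical bijection between $\SL(2,\mathbb{Z})$-conjugacy classes of trace $t$ and proper equivalence classes of forms of discriminant $D:=t^2-4$. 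Hence $H(t)$ agrees with the form class number $h(D)$ up to a bounded factor, and it suffices to prove $h(t^2-4)>|t|^{1-\theta}$ for large $t$.

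First I would bound the regulator. The algebraic integer $\eta=\frac{t+\sqrt{t^2-4}}{2}$ is a unit of norm $1$ in the order of discriminant $D$ (it is the larger eigenvalue of $A$), and since $1<\eta<t$ while the fundamental unit $\varepsilon_D$ is the least unit exceeding $1$, we get $R(D)=\log\varepsilon_D\le\log\eta<\log t$. Feeding this into the class number formula $h(D)\,R(D)=\tfrac{1}{2}\sqrt{D}\,L(1,\chi_D)$ yields
\[
  h(D)=\frac{\sqrt{D}\,L(1,\chi_D)}{2R(D)}\;\ge\;\frac{\sqrt{D}\,L(1,\chi_D)}{2\log t},
\]
and, since $\sqrt{D}=\sqrt{t^2-4}\asymp t$, the only missing ingredient is a lower bound for $L(1,\chi_D)$.

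The final step is Siegel's theorem: for every $\epsilon>0$ there is $c_\epsilon>0$ with $L(1,\chi_D)\ge c_\epsilon D^{-\epsilon}$. Substituting gives $h(D)\gg_\epsilon t^{1-2\epsilon}/\log t\gg t^{1-3\epsilon}$ for all sufficiently large $t$; taking $\epsilon=\theta/3$ produces the claimed bound $H(t)>|t|^{1-\theta}$ beyond a threshold $T(\theta)$, and $H\in w(\log)$ follows at once because $|t|^{1-\theta}/\log|t|\to\infty$. I expect the main difficulty to be bookkeeping rather than conceptual depth: one must pin down the exact normalization in the form-class correspondence (primitive versus imprimitive forms, the ambiguity $\pm A$, and the gap between the order $\mathbb{Z}[\eta]$ and the maximal order when $t^2-4$ is non-fundamental), each of which costs only a bounded multiplicative factor and so leaves the exponent untouched. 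The genuinely unavoidable feature is that $T(\theta)$ is \emph{ineffective}, inherited from Siegel's bound; since the lemma only asserts the existence of such a threshold, this is harmless.
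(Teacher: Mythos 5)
The paper offers no proof of this lemma at all: it is imported as Theorem 2 of \cite{chowla1980number} and used as a black box in the proof of Theorem \ref{thm:typical}, so there is no internal argument to compare yours against. What you have written is a reconstruction of the standard proof of that external result, and it is essentially sound: the Gauss correspondence attaches to an $\SL(2,\mathbb{Z})$-conjugacy class of trace $t$ a proper equivalence class of integral binary quadratic forms of discriminant $t^2-4$; the element $\eta=\frac{t+\sqrt{t^2-4}}{2}$ is a unit exceeding $1$ in the order of that discriminant, so the regulator is at most $\log t$; and the class number formula combined with Siegel's theorem then gives $h(t^2-4)\gg_\theta t^{1-\theta}$. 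Two points deserve more care than your closing paragraph suggests. First, you do not need $H(t)$ to equal $h(t^2-4)$ ``up to a bounded factor'' (in general $H(t)$ is a Hurwitz-type sum of class numbers over all discriminants $(t^2-4)/f^2$); for a lower bound the inequality $H(t)\ge h(t^2-4)$, coming from the fact that distinct primitive form classes of discriminant $t^2-4$ yield distinct conjugacy classes, already suffices and sidesteps that bookkeeping. Second, when $t^2-4=f^2D_0$ is non-fundamental, the $L$-value of the order differs from $L(1,\chi_{D_0})$ by the Euler factor $\prod_{p\mid f}\bigl(1-\chi_{D_0}(p)p^{-1}\bigr)$, which is bounded below by $\varphi(f)/f$ and hence is \emph{not} bounded away from zero by an absolute constant; it is, however, $\gg t^{-\epsilon}$, so the exponent survives exactly as you claim. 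Your observation that $T(\theta)$ is ineffective because of Siegel is correct and harmless here, since the paper only uses the qualitative consequence $H\in w(\log)$.
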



\begin{proof}[Proof of Theorem \ref{thm:typical}]
  Define $N(R) = \#\{[f] \in cl(\PSL(2,\mathbb{Z})): \lambda(f) < R\}$,
  the number of conjugacy classes in $\PSL(2,\mathbb{Z})$ whose
  dilatation is less than $R$. Then by Lemma
  \ref{lem:conjugacyclasses},
  \begin{align*}
    N(R) &\ge \#\{[f] \in cl(\PSL(2,\mathbb{Z})): \tr(f) = \lceil R \rceil \} \\ &= \#\{[f] \in cl(\SL(2,\mathbb{Z})): \tr(f)=\lceil R \rceil \} \in w(\log R),
  \end{align*}
  where we used the fact that $\lambda(f)<R$ if and only if
  $\tr(f) \le \lceil R \rceil$.  Define
  $N_{i}(R) = \#\{[f] \in cl(\PSL(2,\mathbb{Z})): \lambda(f)<R,\ \
  l_{\mathcal{C}}(f) = i\}$, which is well-defined since conjugate
  elements have the same translation length. Then the numerator of
  ratio in \eqref{eqn:typical} can be rewritten as follows:
  \begin{align}
    \label{eqn:numerator}
    \#\{[f] \in cl(\PSL(2,\mathbb{Z})): \lambda(f)<R,\ \ m_{f} \ge k]\} = \sum_{i=1}^\infty N_{i}(R)\mathbf{1}_{[k,\infty)}(N_i(R)),
  \end{align}
  where $\mathbf{1}_A$ is the characteristic function on a set
  $A$. Note the infinite sum in \eqref{eqn:numerator} is, in fact, a
  finite sum, since the index $i$ is bounded above
  $c \log R := \log (1+\sqrt{2}) \log R$ by Theorem
  \ref{thm:boundsOfRatio}(1). Since
  $\sum_{i=1}^\infty N_i(R)= N(R) \in w(\log R)$, this suggests that
  for sufficiently large $R$, the RHS of \eqref{eqn:numerator}
  \[
    \sum_{i=1}^{\lfloor c\log R \rfloor}N_i(R) \mathbf{1}_{[k,\infty)}(N_i(R)),
  \]
  is minimized when all but one $\mathbf{1}_{[k,\infty]}(N_i(R))$
  vanish. More precisely, it is minimized when $N_j(R)=k-1$ except for
  one $j=i$, and $N_i(R)= N(R) - (\lfloor c\log R \rfloor -1) (k-1)$.
  Therefore, the ratio in \eqref{eqn:typical}:
    \begin{align*}
     1 \ge  \frac{\#\{[f] \in cl(\PSL(2,\mathbb{Z})): \lambda(f)<R,\ \ m_f \ge k\}}{\#\{f \in cl(\PSL(2,\mathbb{Z})): \lambda(f)<R\}} &\ge \frac{N(R) - (\lfloor c\log R \rfloor -1)(k-1)}{N(R)}\\
      &= 1 - (k-1)\frac{\lfloor c\log R \rfloor -1}{N(R)} \to 1
    \end{align*}
    as $R \to \infty$, since $N(R) \in w(\log R)$. Therefore, we established the limit \eqref{eqn:typical}.
\end{proof}

\clearpage
\appendix
\section{Algorithms for Ladders}
In this appendix, we provide two algorithms for ladders: to generate a
ladder bounded by two given edges(Algorithm \ref{alg:genLadder}), and to calibrate a ladder with odd length into one with even length(Algorithm \ref{alg:calLadder}).

\begin{algorithm}[h]
  \caption{Generate a ladder bounded by two edges in the Farey graph.
    \label{alg:genLadder}}
  \begin{algorithmic}[1]
    \Require{Two pairs of \textit{ExtRational}s $\{pq,rs\}$ is distinct from $\{xy,zw\}$ }
    \Function{generateLadder}{$pq,rs,xy,zw$}
      \Let{$pivotList$}{empty list}
      \Let{$typeList$}{empty list}
      \Let{$prevPivot$}{None} \Comment{stores the previous pivot point}
      \Let{$m$}{\textsc{FareySum}($pq,rs$)} \Comment{tells relative positions of $xy,zw$ to $pq,rs$}
      \While{True}
      \If{$m$ is in between $(pq,rs)$}
        \Let{$tu$}{\textsc{FareySum}($pq,rs$)}
      \Else
        \Let{$tu$}{\textsc{FareySubtract}($pq,rs$)}
      \EndIf
      \Let{$(d1,d2)$}{\textsc{FareySort}($pq,rs$)} \Comment{$d1>d2$.}
      \If{$m$ is in between $(d1,r)$}
      \Let{($choice$, $non-choice$)}{$d1, d2$}
      \Else
      \Let{($choice$, $non-choice$)}{$d2, d1$}
      \EndIf
      \If{$choice == prevPivot$}
      \State add the last element of $typeList$ by 1
      \Else
        \If{$prevPivot$ is None}
        \State append $nonchoice$ to the end of $pivotList$.
        \EndIf
      \State append $choice$ to the end of $pivotList$
      \State append 1 to the end of $typeList$
      \Let{$prev$}{$choice$}
      \EndIf
      \If{$\{choice, tu\} == \{xy,zw\}$} 
      \State append $tu$ to the end of $pivotList$
      \State {\bf return} $pivotList, typeList$
      \EndIf
      \EndWhile
    \EndFunction
  \end{algorithmic}
\end{algorithm}

\begin{algorithm}
  \caption{Calibrate an odd ladder into even ladder.
    \label{alg:calLadder}}
  \begin{algorithmic}
    \Require{$mn$ must be set as \textsc{FareySum}($firstPivot,secondPivot$)}
  \Function{calibrateLadder}{$pivotList,typeList,mn$}
    \If{the length of $pivotList$ is even} 
    \State {\bf return} $pivotList, typeList$ \Comment{no need to calibrate}
    \EndIf
    \Statex
    \Let{$intTranslate$}{the first element of $ladderType$}
    \State remove the first element of $ladderType$
    \Let{$lastElement$}{the last element of $pivotList$}
    \State remove the last element of $pivotlist$
    \Let{$lastPivot$}{the last element of $pivotList$}

    \For{$i=0$ to $intTranslate$}
    \If{the $mn$ is in between $lastPivot$ and $lastElement$}
      \Let{$lastElement$}{\textsc{FareySum}($lastPivot,lastElement$)}
    \Else
      \Let{$lastElement$}{\textsc{FareySubtract}($lastPivot,lastElement$)}
    \EndIf
    \EndFor
    \State remove the first element of $pivotList$
    \State append $lastElement$ to the end of $pivotList$
    \State add the last element of $typeList$ by $intTranslate$
    \State {\bf return} $pivotList, ladderType$
  \EndFunction
  \end{algorithmic}
\end{algorithm}

\clearpage 
\bibliographystyle{alpha}
\bibliography{mybib}

\end{document}